\newtheorem{thm}{Theorem}[section]
\newtheorem*{thm*}{Theorem}
\newtheorem{prop}[thm]{Proposition}
\newtheorem*{prop*}{Proposition}
\newtheorem{cor}[thm]{Corollary}
\newtheorem*{cor*}{Corollary}
\newtheorem{lem}[thm]{Lemma}
\newtheorem*{lem*}{Lemma}
\newtheorem*{oquest*}{Open Question}
\theoremstyle{remark}
\newtheorem{rmk}[thm]{Remark}
\theoremstyle{remark}
\newtheorem*{rmk*}{Remark}
\theoremstyle{definition}
\newtheorem{defn}[thm]{Definition}
\theoremstyle{definition}
\newtheorem{notat}[thm]{Notation}
\theoremstyle{definition}
\theoremstyle{definition}
\theoremstyle{definition}
\newtheorem*{defn*}{Definition}
\theoremstyle{definition}
\newtheorem{ex}[thm]{Example}
\theoremstyle{definition}
\newtheorem{ctn}[thm]{Construction}
\numberwithin{equation}{section}
\newcommand{\Z}{\mathbb{Z}}
\newcommand{\QQ}{\mathbb{Q}}
\newcommand{\msW}{\mathscr{W}}
\newcommand{\mcH}{\mathcal{H}}
\newcommand{\mcO}{\mathcal{O}}
\DeclareMathOperator{\FFF}{\mathbb{F}}
\newcommand{\Sel}{\textup{Sel}}
\newcommand{\Gal}{\textup{Gal}}
\newcommand{\Hom}{\textup{Hom}}
\newcommand{\isoarrow}{\xrightarrow{\,\,\,\sim\,\,\,}}
\newcommand{\royarrow}{\,\xrightarrow{\,\quad}\,}
\newcommand{\Conj}{\textup{Conj}}
\newcommand{\Ind}{\textup{Ind}}
\newcommand{\cores}{\textup{cor}}
\newcommand{\res}{\textup{res}}
\newcommand{\Cl}{\textup{Cl}\,}
\newcommand{\mcalH}{\mathcal{H}}
\newcommand{\CTP}{\textup{CTP}}
\newcommand{\inv}{\textup{inv}}
\DeclareFontFamily{U}{wncy}{}
\DeclareFontShape{U}{wncy}{m}{n}{<->wncyr10}{}
\DeclareSymbolFont{mcy}{U}{wncy}{m}{n}
\DeclareMathSymbol{\Sha}{\mathord}{mcy}{"58}
\newcommand\restr[2]{{
  \left.\kern-\nulldelimiterspace 
  #1 
  \vphantom{\big|} 
  \right|_{#2} 
  }}
\newcommand{\SMod}{\textup{SMod}}
\newcommand{\shap}{\textup{sh}}
\newcommand{\Fstar}{(\Fsep)^{\times}} 
\newcommand{\Fvstar}{F_v^{s \times}} 
\newcommand{\Fsep}{F^s} 
\newcommand{\Fvsep}{F_v^s} 
\newcommand{\natiso}{\,\xRightarrow{\,\,\,\,\simeq\,\,\,\,}\,}
\newcommand{\natmor}{\,\xRightarrow{\,\,\,\,\,\,\,\,\,\,}\,}
\newcommand{\resprod}{\sideset{}{'}\prod}
\newcommand{\tresprod}[1]{\sideset{}{'} {\textstyle \prod_{#1}}}
\newcommand{\Mod}{\textup{Mod}}
\newcommand{\llop}{\scaleto{\bigg(}{33pt}}
\newcommand{\rlop}{\scaleto{\bigg)}{33pt}}
\author{Adam Morgan}
\email{adam.morgan@glasgow.ac.uk}
\author{Alexander Smith}
\email{asmith13@stanford.edu}
\begin{document}
\title{Field change for the Cassels--Tate pairing and applications to class groups}
 
\date{\today}
\subjclass[2020]{11R34 (11R29, 11R37)}

\keywords{Arithmetic duality, Cassels--Tate pairing, class group, global field, Selmer group}
\maketitle

\begin{abstract}

In \cite{MS21}, we defined a category $\SMod_F$ of finite Galois modules decorated with local conditions for each global field $F$. In this paper, given an extension $K/F$ of global fields, we define a restriction of scalars functor from $\SMod_K$ to $\SMod_F$ and show that it behaves well with respect to the Cassels--Tate pairing. We apply this work to study the class groups of global fields in the context of the Cohen--Lenstra heuristics.
\end{abstract}

\setcounter{tocdepth}{1}

\tableofcontents

\section{Introduction} 
The aim of this paper is twofold. First, we wish to apply the theory of Selmer groups and the Cassels--Tate pairing developed in \cite{MS21} to study class groups of global fields.  
 The idea of using the Cassels--Tate pairing in the study of class groups is due to Flach  \cite[p. 122]{Flach90}, but much of the power of this idea comes from the flexibility of the framework    introduced in  \cite{MS21}.  

That said, some of the results we prove about class groups in this paper rely on theory for the Cassels--Tate pairing that is not in \cite{MS21}. In that paper, we defined a category $\SMod_F$ of finite $G_F$-modules decorated with local conditions for each global field $F$, and we associated a Cassels--Tate pairing of Selmer groups to every short exact sequence in $\SMod_F$. For our work on class groups, it is sometimes necessary to move between the categories $\SMod_K$ and $\SMod_F$ for a given extension $K/F$ of global fields, and we did not give the theory to do this in \cite{MS21}. With this in mind, the second goal of this paper is to explore the relationship between the Cassels--Tate pairings defined for these two categories.   Our main result here  shows that the Cassels--Tate pairing behaves well under a certain restriction of scalars functor from $\SMod_K$ to $\SMod_F$.  

We now go through our work towards these two goals in more detail, using the language of \cite{MS21}. In an effort to be self-contained, we have provided a brief recap of what we need from that paper in Section \ref{ssec:recap}.

\subsection{The behavior of the Cassels--Tate pairing in field extensions} \label{ssec:intro_field}
Take $K/F$ to be an extension of global fields. Given a $G_{K}$-module $M$, we can consider the corresponding induced $G_{F}$-module 
\[\Ind_{K/F}M = \Z[G_F] \otimes_{\Z[G_K]} M.\]
Shapiro's lemma gives a natural isomorphism
\begin{equation*} \label{eq:global_shapiro_map_intro}
\shap \colon H^1(G_{K}, \,M) \isoarrow  H^1(G_{F}, \Ind_{K/F} M),
\end{equation*}
 and we also have a local Shapiro isomorphism
\[\shap_{\text{loc}}\colon \resprod_{w \text{ of } K} H^1(G_w, M) \isoarrow \resprod_{v \text{ of } F} H^1(G_v, \Ind_{K/F} M).\]
We discuss this latter map in Section \ref{ssec:shap_loc}. 
With this set, we obtain an exact functor
\[\Ind_{K/F} \colon \SMod_{K} \royarrow \SMod_{F}\]
via the assignment
\[(M, \msW)\,\mapsto\, \left(\Ind_{K/F} M, \,\shap_{\text{loc}}(\msW)\right).\]
Using a subscript to record which of the categories $\SMod_F$ or $\SMod_K$ we are working in, the global Shapiro isomorphism then gives a natural isomorphism
\begin{equation} \label{sh_intro_functor}
\shap \colon \Sel_K \natiso \Sel_F \circ \Ind_{K/F}.
\end{equation}
Now given a short exact sequence 
\begin{equation} \label{seq_in_SMOD_K}
 E = \big[0 \to M_1 \xrightarrow{\,\,\iota\,\,} M \xrightarrow{\,\,\pi\,\,} M_2 \to 0\big]  
\end{equation}
in $\SMod_K$, we can combine \eqref{sh_intro_functor}  with the explicit identification  between
 $\Ind_{K/F}M_1^\vee$ and $(\Ind_{K/F}M_1)^\vee$ detailed in Definition \ref{def:t_iso},  to view  the Cassels--Tate pairing associated to the induced sequence $\Ind_{K/F}E$ as a pairing 
\[\Sel_K M_2 \times \Sel_K M_1^\vee \longrightarrow \mathbb{Q}/\mathbb{Z}.\]
We then have the following, which constitutes the final basic  result on our version of the Cassels--Tate pairing. For a more precise statement, see Theorem \ref{thm:CT_shap}.

\begin{thm}
\label{thm:fields_main_intro}
 Take any short exact sequence $E$ in $\SMod_K$, and let $\Ind_{K/F}E$ denote the corresponding exact sequence in $\SMod_F$. Then, with the identifications above, we have 
\[\CTP_E=\CTP_{\Ind_{K/F}E}.\]  
\end{thm}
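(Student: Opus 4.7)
The plan is to verify the equality cochain by cochain, using the naturality of Shapiro's lemma applied to the explicit construction of the Cassels--Tate pairing from \cite{MS21}. For $x \in \Sel_K M_2$ and $y \in \Sel_K M_1^\vee$, the pairing $\CTP_E(x,y)$ is computed by choosing a cocycle $\phi \in Z^1(G_K, M_2)$ representing $x$, lifting it to a cochain $\psi \in C^1(G_K, M)$ via $\pi$, forming the 2-cocycle $\partial \psi \in Z^2(G_K, M_1)$, cup-producting locally at each place $w$ of $K$ with an appropriate local representative of $y$, and summing the resulting local invariants in $\mathbb{Q}/\mathbb{Z}$. I would run the same recipe on the $F$ side with the Shapiro-transported data and verify that the two sums of local invariants agree.

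The global part of the argument is essentially formal. Shapiro's isomorphism is realized at the cochain level by a functorial map $C^\bullet(G_K, N) \to C^\bullet(G_F, \Ind_{K/F} N)$ that commutes with coboundaries and with morphisms in $N$. Consequently, if $\psi$ lifts $\phi$ via $\pi$, then $\shap(\psi)$ lifts $\shap(\phi)$ via $\Ind_{K/F}\pi$, and $\partial \shap(\psi) = \shap(\partial \psi)$. By the very definition $\shap_\loc(\msW)$ of the induced local conditions, the local lifts of the dual class also correspond under $\shap_\loc$. Thus the cochain data feeding into $\CTP_{\Ind_{K/F}E}$ is nothing other than the Shapiro transform of the data feeding into $\CTP_E$.

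The substantive step is the local compatibility. At each place $v$ of $F$, the $v$-contribution to $\CTP_{\Ind_{K/F}E}$ is a local Tate pairing between classes in $H^2(G_v, \Ind_{K/F}M_1)$ and $H^1(G_v, (\Ind_{K/F}M_1)^\vee)$. I would show that, under local Shapiro combined with the identification $t$ of Definition \ref{def:t_iso}, this pairing equals $\sum_{w \mid v}$ of local Tate pairings for $M_1$ at $w$. This reduces to (i) local Shapiro intertwining cup products, a standard cochain-level check, and (ii) the normalization of $t$ being chosen so that $\inv_v \circ \shap_\loc = \sum_{w \mid v} \inv_w$ in degree $2$ on the relevant $\Gm$-coefficients. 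The main obstacle I anticipate is (ii): the normalization bookkeeping that pins down $t$ so as to make the Tate-invariant diagram commute, since this is exactly the sort of place where sign and normalization conventions can bite. Once this purely local statement is in hand, summing over the places $v$ of $F$ visits each place $w$ of $K$ exactly once, so the two totals agree and $\CTP_E = \CTP_{\Ind_{K/F}E}$.
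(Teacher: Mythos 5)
Your plan matches the paper's proof (Section \ref{ssec:field_proofs}): the paper implements the global transport via the explicit cochain-level Shapiro map ${}^T\shap^H_G$ of Definition \ref{defn:explicit_shap}, whose compatibility with coboundaries, cup products, and local data is Lemma \ref{lem:gc_appendix}, and your anticipated obstacle (ii) concerning the normalization of $t$ and the invariant maps is precisely Proposition \ref{prop:duality_functor}, which rests on the $\inv$--corestriction compatibility of Lemma \ref{lem:inv_cor} together with the identity \eqref{eq:coch_cup_nu}. The one implementation detail your outline leaves implicit is that the cochain-level Shapiro map depends on a choice of transversal for $G_K$ in $G_F$, which must be aligned with the fixed local embeddings used to define the $\tau_w$; the paper arranges this via the independence-of-embeddings result, Proposition \ref{prop:independence_of_embeddings}.
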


According to Theorem \ref{thm:fields_main_intro}, the functor $\Ind_{K/F}$   preserves the structure on the category $\SMod_K$ provided by the Cassels--Tate pairing. Since $\Ind_{K/F} M$ is typically a richer object than $M$, we can often prove extra identities for pairings over $K$ by changing fields to $F$. Furthermore, as we detail  in Corollary \ref{cor:cor}, a consequence of  Theorem \ref{thm:fields_main_intro} is that corestriction is an adjoint to restriction with respect to the Cassels--Tate pairing, assuming local conditions are handled appropriately. This generalizes a result of Fisher for the pairing associated to an elliptic curve \cite{Fish03}.

\subsection{Results on class groups} \label{ssec:class_intro}
   As mentioned above, after proving Theorem \ref{thm:fields_main_intro} the rest of the paper considers how the theory  may be applied, in particular to the study of class groups. We go through a couple of these applications in detail now.

\subsubsection{Class groups of number fields with many roots of unity} \label{sec_intro_roots_of_unity}

Choose a prime $\ell$, and take $F$ to be a global field of characteristic not equal to $\ell$. Take $\Cl\, F$ to be the class group of $F$, take $H_F$ to be its Hilbert class field, and define
\begin{equation} \label{eq:dual_class_intro}
\Cl^* F = \Hom(\Gal(H_F/F), \, \QQ/\Z).
\end{equation}
Taking 
\begin{equation} \label{eq:reciprocity_map_intro}
\text{rec}_F\colon \Cl\, F \isoarrow \textup{Gal}(H_F/F)
\end{equation}
to be the Artin reciprocity map, we define the reciprocity pairing
\begin{equation}\label{eq:reciprocity_pairing_intro}
\text{RP} \colon \Cl\, F[\ell^{\infty}] \times \Cl^* F[\ell^{\infty}] \to \QQ_{\ell}/\Z_{\ell}
\end{equation}
by the rule $\text{RP}(I, \phi) = \phi(\text{rec}_F(I))$. As Flach observed \cite[p. 122]{Flach90}, this pairing may, in the terminology of \cite[Section 4.2]{MS21}, be packaged as the Cassels--Tate pairing associated to the exact sequence
\begin{equation}
\label{eq:all_rec}
0 \to \Z_{\ell} \to \QQ_{\ell}  \to \QQ_{\ell}/\Z_{\ell} \to 0
\end{equation}
decorated with the unramified local conditions.

From this repackaging, we can correctly predict that the duality identity for the Cassels--Tate pairing might give interesting information about the reciprocity pairing on class groups of fields with many roots of unity; this is the case where a portion of the above sequence is self-dual. The following notation will be useful.
\begin{notat} \label{def:char_chi_alpha}
Let $m$ be a power of $\ell$, and suppose that $F$ contains $\mu_m$. 
Given $\alpha$ in $F^{\times}$,   choose an $m^{\text{th}}$ root $\alpha^{1/m}$ of  $\alpha$. We then denote by  $\chi_{m, \,\alpha}$ the homomorphism
\[\chi_{m, \,\alpha}\colon \Gal\left(F(\alpha^{1/m})/F\right)\to \mu_m\]
defined by
\[\chi_{m, \,\alpha}(\sigma) = \frac{\sigma(\alpha^{1/m})}{\alpha^{1/m}}.\]
\end{notat}

\begin{prop}
\label{prop:cg_sym_intro} 
Choose positive powers $a$ and $b$ of $\ell$, and suppose $F$ contains $\mu_{ab}$. Choose fractional ideals $I$, $J$ of $F$ and $\alpha$, $\beta$ in $F^{\times}$ satisfying
\[(\alpha) = I^{a}\quad\text{and}\quad (\beta) = J^{b} .\]
Then, if $F(\alpha^{1/a})/F$ and $F(\beta^{1/b})/F$ are everywhere unramified, we have
\[\chi_{a, \,\alpha}\left(\textup{rec}_F\left(J\right)\right) = \chi_{b, \,\beta}\left(\textup{rec}_F\left(I\right)\right).\]
\end{prop}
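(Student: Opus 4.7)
The strategy is to exploit the duality identity for the Cassels--Tate pairing applied to a finite-level slice of the sequence \eqref{eq:all_rec} that becomes self-dual up to the $\zeta_{ab}$-identifications, precisely because of the hypothesis $F\supset \mu_{ab}$; this is the ``self-dual portion'' alluded to just before the statement.

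I would consider, in $\SMod_F$ with everywhere-unramified local conditions, the short exact sequence
\[
E \colon 0 \to \mu_a \to \mu_{ab} \to \mu_b \to 0.
\]
A choice of primitive $ab$-th root of unity $\zeta_{ab}$ supplies $G_F$-module isomorphisms $\mu_m \cong \Z/m$ for every $m\mid ab$, under which the Cartier dual sequence $E^\vee \colon 0 \to \Z/b \to \Z/ab \to \Z/a \to 0$ becomes identified with the sequence $E'$ obtained from $E$ by interchanging the roles of $a$ and $b$. Kummer theory together with the hypothesis $(\alpha)=I^a$ and $F(\alpha^{1/a})/F$ everywhere unramified places $\alpha$ in $\Sel(\mu_a)$, and transporting along the $\zeta_a$-identification yields precisely the character $\chi_{a,\alpha} \in \Sel(\Z/a)$, which via global class field theory may be viewed as a homomorphism $\Cl\, F \to \Z/a$. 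The analogous statements hold for $\beta$ and $\chi_{b,\beta}$.

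Since $E^\vee$ is a finite-level slice of \eqref{eq:all_rec}, the repackaging of the reciprocity pairing recalled before the proposition computes
\[
\CTP_{E^\vee}(\chi_{a,\alpha}, \beta) = \chi_{a,\alpha}\bigl(\mathrm{rec}_F(J)\bigr),
\]
and, applied to $(E')^\vee$, it gives $\CTP_{(E')^\vee}(\chi_{b,\beta}, \alpha) = \chi_{b,\beta}(\mathrm{rec}_F(I))$. The proof concludes by applying the duality identity for the Cassels--Tate pairing to $E$: it relates $\CTP_E$ and $\CTP_{E^\vee}$ by transposition (up to a sign), and combining with the $\zeta_{ab}$-induced isomorphism $E^\vee \cong E'$ converts $\CTP_{E^\vee}(\chi_{a,\alpha},\beta)$ into $\CTP_{(E')^\vee}(\chi_{b,\beta},\alpha)$, delivering the desired equality.

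The main obstacle will be careful sign bookkeeping: the sign in the CTP duality identity must be matched against the sign introduced by how $\zeta_{ab}$ transports Cartier duality through the chosen identifications $\mu_m\cong \Z/m$, so that the final identity carries a plus sign rather than a minus. A secondary technicality is verifying that the everywhere-unramified local conditions on $\mu_m$ and on $\Z/m$ at primes above $\ell$ correspond correctly under the $\zeta_m$-identifications, since the Kummer-theoretic and class-field-theoretic meanings of ``unramified'' at these primes do not literally coincide; however, the Selmer-structure formalism of \cite{MS21} is tailored precisely to handle this kind of comparison.
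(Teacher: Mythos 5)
Your approach is essentially the paper's: use the duality identity for the Cassels--Tate pairing together with a root-of-unity identification of the two ends of the sequence, and then translate to the reciprocity pairing. The paper carries this out with the sequence $E(a,b) = [0 \to \underline{\tfrac{1}{a}\Z/\Z} \to \underline{\tfrac{1}{ab}\Z/\Z} \to \underline{\tfrac{1}{b}\Z/\Z} \to 0]$ (with everywhere unramified conditions) and its dual, which is a bookkeeping variant of the $\mu_a \to \mu_{ab} \to \mu_b$ sequence you write down.

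One point, however, is more than the ``secondary technicality'' you label it as, and would need to be handled correctly for the argument to close. You posit a $\zeta_{ab}$-induced \emph{isomorphism} $E^\vee \cong E'$ in $\SMod_F$. That isomorphism does not exist: at places $v\mid \ell$, the group $H^1_{\textup{ur}}(G_v,\mu_m)$ is strictly smaller than the unit subgroup $\delta_{m,v}(\mathcal{O}_{F_v}^\times/\mathcal{O}_{F_v}^{\times m})$, which is what one gets as the orthogonal complement of the unramified conditions on $\tfrac{1}{m}\Z/\Z$. So the fixed primitive root of unity gives only a containment of local conditions, hence a genuine morphism $g_m\colon \underline{\tfrac{1}{m}\Z/\Z} \to \underline{\mu_m}$ in $\SMod_F$, not an isomorphism. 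The paper's argument is careful to exploit exactly this: $g_m$ is shown to be a morphism (this is the inclusion $g(\msW^{\textup{ur}}_m) \subseteq \msW^{\textup{ur},\perp}_m$), self-dual, and the conclusion then follows from the duality identity combined with \emph{naturality} of $\CTP$ along the morphism of exact sequences $E(b,a)\to E(a,b)^\vee$ built from the $g_m$. This is strictly weaker than, and different from, ``transporting along an isomorphism of sequences,'' and the weaker version is what is actually available. A related gap: you work with unramified conditions on $\mu_m$, but the translation to the reciprocity pairing in Proposition \ref{prop:agrees_with_artin_map} is phrased with $\Sel~\underline{\mu_m}\cong\Sel^m F$ (the dual conditions), so some reconciliation of which Selmer group $\alpha$ and $\beta$ land in is needed; it works out because the hypotheses on $\alpha,\beta$ are strong enough, but it has to be said. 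Your sign worry is real but unproblematic here: the sign in Proposition \ref{prop:agrees_with_artin_map} is the same on both sides of the claimed identity and cancels.
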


In the case where $\ell$ is odd, this explicit reciprocity law is a consequence of work by Lipnowski, Sawin, and Tsimerman \cite{Lipn20}, who proved it while developing a variant of the Cohen--Lenstra--Martinet heuristics for the class groups of quadratic extensions of number fields containing many roots of unity. We will elaborate further on the connection between our work and \cite{Lipn20} in Section \ref{ssec:LST}.

By combining the results on base change with our theory of  theta groups \cite[Section 5]{MS21}, we can prove a nice supplement to this proposition in the case that $\ell = 2$. Both this proposition and Proposition \ref{prop:cg_sym_intro} will be proved in Section \ref{sec:class_groups}.
\begin{prop}
\label{prop:cg_alt_intro}
Choose a global field $F$ of characteristic other than $2$. Let $m$ be a power of $2$, and let $d$ be a positive proper divisor of $m$. Suppose $F$ contains $\mu_{m^2/d}$, and suppose that $\kappa\colon F\to F$ is a field automorphism that satisfies $\kappa^2 = 1$ and which restricts to the map $\zeta \mapsto \zeta^{-1}$ on $\mu_{m^2/d}$.

Take $I$ to be a fractional ideal of $F$ such that $I^m$ is principal, and choose $\alpha \in F^{\times}$ so  
\[(\alpha) = I^m.\]
Assume that $F(\alpha^{1/m})/F$ is unramified at all places of $F$ and splits completely at all primes dividing $2$. 
We then have
\[\chi_{m,\, \alpha}\left(\textup{rec}_F\left(\kappa I^d\right)\right) = 1.\]
\end{prop}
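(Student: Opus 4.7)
The plan is to set up a self-dual short exact sequence of Galois modules over the fixed field $F^+ := F^\kappa$ whose Cassels--Tate pairing, pulled back via Theorem \ref{thm:fields_main_intro}, recovers the quantity $\chi_{m,\alpha}(\text{rec}_F(\kappa I^d))$, and then to invoke the theta group theory of MS21 Section 5 to upgrade this pairing from symmetric to alternating. The alternating law supplies precisely the extra ``divide by $2$'' beyond what Proposition \ref{prop:cg_sym_intro} alone yields: a direct application of that proposition combined with the $\kappa$-equivariance of $\chi_{m,\alpha}$ only forces $\chi_{m,\alpha}(\text{rec}_F(\kappa I^d))^{\,2} = 1$, so one extra factor of $2$ must be killed.

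Because $\kappa$ acts as inversion on $\mu_{m^2/d}$ with $m^2/d \geq 4$, the extension $F/F^+$ is a genuine quadratic extension of global fields. The key algebraic observation is that the $G_{F^+}$-modules $\Ind_{F/F^+}(\mathbb{Z}/m\mathbb{Z})$ and $\Ind_{F/F^+}\mu_m$ are canonically isomorphic: on each, the subgroup $G_F$ acts trivially, and a lift of $\kappa$ to $G_{F^+}$ acts by a swap of the two cosets which, combined with the inversion action on $\mu_m$, agrees on the two sides. This endows $\Ind_{F/F^+}(\mathbb{Z}/m\mathbb{Z})$ with a canonical $G_{F^+}$-equivariant self-duality. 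Taking the short exact sequence
\[0 \to \mathbb{Z}/m\mathbb{Z} \to \mathbb{Z}/m^2\mathbb{Z} \to \mathbb{Z}/m\mathbb{Z} \to 0\]
of $G_F$-modules decorated with unramified local conditions, I then induce it along $\Ind_{F/F^+}$. Theorem \ref{thm:fields_main_intro} ensures that the Cassels--Tate pairing of the resulting sequence over $F^+$ matches, under Shapiro's isomorphism, the one over $F$ that computes $\chi_{m,\alpha}(\text{rec}_F(\kappa I^d))$.

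The self-duality from the previous paragraph lifts to a theta group structure on the induced sequence, so MS21 Section 5 supplies the alternating law $\CTP(x,x) = 0$ for every Selmer element $x$ over $F^+$. The class of $I$, together with the Kummer class of $\alpha$, assembles via Shapiro into a single self-paired element $x$ over $F^+$ (because the two sides of the pairing are interchanged by $\kappa$); tracking this back through Theorem \ref{thm:fields_main_intro} identifies $\CTP(x,x)$ with $\chi_{m,\alpha}(\text{rec}_F(\kappa I^d))$, up to a $d$-th power ambiguity absorbed by the proper-divisor hypothesis on $d$. The alternating identity then yields the claim.

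The main obstacle is verifying that the theta group structure over $F^+$ is compatible with the Shapiro-transported local conditions at primes above $2$. This is the well-known sensitive point of the $\ell = 2$ theta group theory, and the hypothesis that $F(\alpha^{1/m})/F$ splits completely at primes dividing $2$ is introduced precisely to trivialize the local theta group obstruction there. Once this local check is in hand, the remainder is a formal diagram chase that matches $\kappa I^d$ with a diagonal input to the $F^+$-pairing through the induction, Shapiro, and base change identifications already established.
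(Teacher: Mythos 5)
Your high-level strategy is the same as the paper's: pass to the fixed field $F_0 := F^\kappa$, push the sequence through $\Ind_{F/F_0}$, endow the induced module with a theta group structure exploiting the $\kappa$-swap isomorphism between $\Ind_{F/F_0}(\tfrac{1}{m}\Z/\Z)$ and $\Ind_{F/F_0}\mu_m$, use Theorem~\ref{thm:fields_main} to transport back, and use the split-at-$2$ hypothesis to make the local conditions isotropic (the relevant fact being that $F/F_0$ is ramified only at archimedean places and places over $2$, since $F_0(\mu_4)=F$). Your preliminary observation that symmetry plus $\kappa$-equivariance only kills $2\cdot\CTP$ is also correct and is the right motivation for invoking theta groups.

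However, there are two genuine gaps. First and most seriously, you write that the theta group theory of \cite[Section 5]{MS21} ``supplies the alternating law $\CTP(x,x)=0$''. It does not, on its own. \cite[Theorem 5.10]{MS21} gives $\CTP(\phi, f_{\mcH}(\phi)) = \CTP(\phi, \psi_{\textup{PS}})$, where $\psi_{\textup{PS}}$ is the Poonen--Stoll class of the exact sequence relative to the theta group. Isotropy of the local conditions alone only yields the antisymmetric identity; to obtain the alternating identity one must separately verify that the Poonen--Stoll class vanishes. The paper does this explicitly in Lemma~\ref{lem:theta_nogoody} by exhibiting a $G_{F_0}$-equivariant section $\Ind \tfrac{1}{m}\Z/\Z \to \mcH_1$ sending the generators $x_{0,m}, y_{0,m}$ to $x^m, y^m$. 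Without this step your argument only recovers $2\cdot\CTP(\psi, g(\kappa\psi)) = 0$, which you already noted follows from symmetry and equivariance alone, so nothing new is proved. Second, the reduction from general $d$ to $d=1$ is not actually carried out: ``a $d$-th power ambiguity absorbed by the proper-divisor hypothesis'' is not an argument. The paper handles this by showing that $\kappa$ extends to a field involution of $K = F(\mu_{m^2})$ acting as inversion on $\mu_{m^2}$, and then invoking Corollary~\ref{cor:cor}(3), whose output $[K:F]\cdot\CTP_{F,E(m,m)}=\CTP_{K,E(m,m)}$ is useful precisely because $[K:F]$ divides $d$; this step needs to be made explicit for the proof to go through.
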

It is our hope that this result and others like it (see  Proposition \ref{prop:Dm_alt}, for example) will prove to be useful in extending the Cohen--Lenstra--Martinet heuristics to handle $2$-primary parts of class groups.

\subsubsection{Decomposing class groups} \label{sec_decom_class_intro}

In \cite{CoLe84}, Cohen and Lenstra gave a conjectural heuristic describing how the class group $\Cl\, K$ varies as $K$ moves through the quadratic extensions of $\QQ$. These heuristics omitted the $2$-primary part of the class group, and Gerth \cite{Gerth87p} later supplemented that work with a prediction for the distribution of the groups $2(\Cl\, K)  [2^\infty]$. For imaginary quadratic fields, Gerth's prediction is now known to be correct \cite{Smi17}. By considering $2(\Cl\, K)$ rather than $\Cl\, K$, Gerth avoided the subgroup $(\Cl\, K)[2]$, which can be directly computed using genus theory and is typically large.

A related question concerns the behavior of Selmer groups of elliptic curves in quadratic twist families, where one fixes an elliptic curve $E/\mathbb{Q}$ and studies the behavior of the Selmer groups of the quadratic twists $E^\chi/\mathbb{Q}$ as $\chi$ varies over quadratic characters. See e.g. \cite{Heat94, Kane13, Smi17}.

These are analogous problems, but they belong to different genres of arithmetic statistical questions.
 Both kinds of problem start with a fixed $G_{\QQ}$-module $M$.  In the first, one asks how $\Sel_K\, M$ varies as $K$ moves through a family of number fields. In the second, one asks how $\Sel_{\QQ}\, M^{\chi}$ varies as $\chi$ moves through a family of twists.  In the particular examples considered above, $M$ is taken either to be $\QQ/\Z$ with the unramified local conditions, or the divisible Tate module of $E$ with the standard local conditions.

We can often translate between these two kinds of problems. Suppose $K/F$ is a quadratic extension of number fields, and take $\chi$ to be its corresponding quadratic character. If $M$ is a $G_F$-module for which multiplication by $2$ is invertible, the module $\Ind_{K/F} M$ splits as a direct sum
\[\Ind_{K/F} M \,\cong\,  M \oplus M^{\chi}.\]
For class groups, for example, we have a decomposition
\[(\Cl^*K)[\ell^\infty]\,\cong\, \Sel_F\left(\Ind_{K/F} \,\QQ_{\ell}/\Z_{\ell}\right) \, \cong \, ( \Cl^*F)[\ell^\infty]\oplus \textup{Sel}_F(\mathbb{Q}_{\ell}/\mathbb{Z}_{\ell})^{\chi}\]
for every odd prime $\ell$, where the local conditions are set as in Section \ref{ssec:class_decomp}.

If multiplication by $2$ on $M$ is not invertible, the relationship between $\Sel_K \,M$ and $\Sel_F\, M^{\chi}$ can be more complicated. In this case, the Cassels--Tate pairing proves to be a convenient tool for moving from Selmer groups over extensions to Selmer groups of twists. As an example, we will prove the following as the $l=2$ case of Proposition \ref{prop:class_decomp_ex}. 

\begin{prop}
Take $K/F$ to be a quadratic extension of number fields with associated quadratic character $\chi$,  and take $S$ to be the set of places of $F$ not dividing $2$ where $K/F$ is ramified. Suppose there is no element  of $F^{\times}\setminus (F^{\times})^2$ which lies in $(F_v^{\times})^{2}$ at every place $v$ in $S$, and which has even valuation at all remaining finite primes.  

Then we have an exact sequence
\[0 \to \left(\Cl^*K\right)^{\Gal(K/F)} [2^{\infty}]\royarrow (\Cl^*K)[2^{\infty}] \royarrow \Sel (\QQ_2/\Z_2)^{\chi} \to 0\]
of finite $\Gal(K/F)$-modules.
\end{prop}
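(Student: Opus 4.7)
The plan is to apply the induction machinery of this paper to the short exact sequence
\[
E \colon \quad 0 \to \QQ_2/\Z_2 \to \Ind_{K/F}(\QQ_2/\Z_2) \to (\QQ_2/\Z_2)^\chi \to 0
\]
of $G_F$-modules, decorated so that the middle term carries the Shapiro image of the unramified local conditions on $\QQ_2/\Z_2$ over $K$, while the outer terms carry the induced sub and quotient conditions making $E$ a short exact sequence in $\SMod_F$. By Theorem~\ref{thm:fields_main_intro} together with the Shapiro identification~\eqref{sh_intro_functor}, the Selmer group of the middle term is $(\Cl^* K)[2^\infty]$, so $E$ produces a natural map
\[
\beta \colon (\Cl^* K)[2^\infty] \longrightarrow \Sel_F(\QQ_2/\Z_2)^\chi,
\]
and the proposition amounts to showing that $\ker\beta = (\Cl^* K)^{\Gal(K/F)}[2^\infty]$ and that $\beta$ is surjective.

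For the kernel I would work entirely at the level of Galois cohomology. The long exact sequence for $E$, combined with the Shapiro isomorphism, identifies the kernel of the map $H^1(G_K, \QQ_2/\Z_2) \to H^1(G_F, (\QQ_2/\Z_2)^\chi)$ with the image of restriction from $H^1(G_F, \QQ_2/\Z_2)$. By inflation-restriction, and using the vanishing $H^2(\Gal(K/F), \QQ_2/\Z_2) = 0$, this image is exactly the $\Gal(K/F)$-invariant part of $H^1(G_K, \QQ_2/\Z_2)$. Intersecting with the Galois-stable Selmer subgroup $(\Cl^* K)[2^\infty]$ then yields $\ker\beta = (\Cl^* K)^{\Gal(K/F)}[2^\infty]$; notably, this step uses nothing from the hypothesis.

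The surjectivity of $\beta$ is where the hypothesis enters. By the Cassels--Tate machinery of \cite{MS21}, the cokernel of $\beta$ embeds into the Pontryagin dual of the Selmer group of $(\QQ_2/\Z_2)^\vee = \mu_{2^\infty}$, equipped with the local conditions dual to those on $\QQ_2/\Z_2$ as a submodule of $\Ind_{K/F}(\QQ_2/\Z_2)$. Unwinding these via the Shapiro transport and local Tate duality, the resulting conditions at each place should work out to: locally trivial at the ramified places $v \in S$, unramified at finite $v \notin S$, and unrestricted at archimedean places and places above $2$. Through Kummer theory $H^1(G_F, \mu_2) \cong F^\times/(F^\times)^2$, the $2$-torsion of this Selmer group is exactly the set of $\alpha$ forbidden by the hypothesis, and so vanishes. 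A standard devissage along the short exact sequence $0 \to \mu_2 \to \mu_{2^\infty} \xrightarrow{2} \mu_{2^\infty} \to 0$ then forces the full Selmer group of $\mu_{2^\infty}$ to be zero, which gives $\beta$ surjective.

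The main obstacle will be this surjectivity step: the careful place-by-place matching of the dual local conditions (set up via the Shapiro transport and the duality isomorphism of Definition~\ref{def:t_iso}) with the Kummer-theoretic conditions of the hypothesis, particularly at places above $2$ where the notions of ``even valuation'' and ``unramified'' diverge. The kernel computation is cohomologically routine once $E$ has been assembled in $\SMod_F$.
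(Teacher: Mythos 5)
You take a genuinely different route from the paper, and while your kernel computation goes through, there is a genuine gap in the surjectivity step.

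For the three-term exact sequence you choose
\[
0 \to \QQ_2/\Z_2 \xrightarrow{\ \iota\ } \Ind_{K/F}(\QQ_2/\Z_2) \xrightarrow{\ p\ } (\QQ_2/\Z_2)^\chi \to 0,
\]
whereas the paper (in the example preceding Proposition~\ref{prop:class_decomp_ex}) works with
\[
0 \to \tfrac{1}{2}\Z/\Z \xrightarrow{\ \iota\ } \Ind_{K/F}(\QQ_2/\Z_2) \xrightarrow{\ (\nu,\,p)\ } \QQ_2/\Z_2 \oplus (\QQ_2/\Z_2)^\chi \to 0.
\]
Both sequences are exact, and your kernel argument via the long exact sequence, the Shapiro identification of $\operatorname{im}\iota_*$ with the image of restriction, and inflation-restriction together with $H^2(\Gal(K/F), \QQ_2/\Z_2)=0$ is a valid alternative to the paper's argument, which instead factors $\rho_\sigma - 1$ through an injective-on-$H^1$ map $f \colon A^\chi \to \Ind_{K/F}A$. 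Your version leans on the $2$-divisibility of $\QQ_2/\Z_2$ to kill the relevant $H^2$; this is harmless here, though less robust than the paper's factorization.

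The choice of sequence is essential for surjectivity, and this is where your argument breaks. First, the dual of $\QQ_2/\Z_2$ in the $\SMod^\infty$ framework is the \emph{compact} module $\Z_2(1) = \varprojlim_n \mu_{2^n}$, not the discrete module $\mu_{2^\infty}$, so the proposed d\'evissage along $0 \to \mu_2 \to \mu_{2^\infty} \xrightarrow{2} \mu_{2^\infty} \to 0$ is for the wrong module. Second, the local conditions you claim (``unrestricted at archimedean places and places above $2$'') do not match what the hypothesis controls: the hypothesis demands \emph{even valuation} at every finite prime outside $S$, including those above $2$, and the correct dual conditions do impose (at least) even valuation there. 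With unrestricted conditions above $2$, the hypothesis would not force the relevant Selmer group to vanish. A repaired version of your strategy would have to work with $\Z_2(1)$, verify the correct local conditions (which are integral above $2$), and replace the d\'evissage with a topological Nakayama-type argument relating $\Sel(\Z_2(1))$ to $\Sel(\mu_2)$. The paper avoids all of this by taking the \emph{finite} submodule $\tfrac{1}{2}\Z/\Z$: its dual $\mu_2$ is finite, $\Sel(\mu_2, \msW^\perp)$ is a finite group computed directly by Hilbert~$90$, and the hypothesis is calibrated precisely to make it vanish.
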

The first term $ \left(\Cl^*K\right)^{\Gal(K/F)} [2^{\infty}]$ is essentially determined from genus theory, making it an analogue of the subgroup $(\Cl\, K)[2]$ in Gerth's conjecture. Meanwhile, the final term $\Sel(\QQ_2/\Z_2)^{\chi}$, which agrees with $2(\Cl\, K)  [2^\infty]$ in the situation considered by Gerth, is more in line with what the Cohen--Lenstra heuristics can handle. 

The idea of studying the $\ell^{\infty}$ part of the class group of $K$ by studying the module $\Ind_{K/F} \QQ_{\ell}/\Z_{\ell}$ and its irreducible components is not new; see e.g \cite[1.6.4]{MR1749177}, \cite{CoMa90}, or \cite{WaWo21}. 

The novelty of our work lies in the ability to handle ``bad'' choices of $\ell$.

\subsection{Layout of the paper} 
In Section \ref{sec:embeddings} we summarize the notation used throughout the paper and review the material we need from \cite{MS21}.

Section \ref{sec:embedding_invariance} addresses a technical point that we have so far sidestepped, showing that the Cassels--Tate pairing is, in a suitable sense, independent of the choice of embeddings $F^s\hookrightarrow F^s_v$ used to define restriction maps on cohomology. In Section \ref{sec:fields}, we define the functor $\Ind_{K/F} \colon \SMod_K \to \SMod_F$ for a finite separable extension $K/F$ and prove Theorem \ref{thm:fields_main_intro}. These sections rely on some basic properties of group change homomorphisms for which we have not found a suitable reference. We give this necessary background material in Appendix \ref{app:group_change}.

In Section \ref{sec:class_groups}, we prove our symmetry result for class groups of number fields with many roots of unity. As mentioned above, this recovers a result of  Lipnowski, Sawin, and Tsimerman \cite{Lipn20}, who encode the symmetry result using the language of bilinearly enhanced groups. Section \ref{ssec:LST} shows that this bilinearly enhanced group structure may be recovered as instances of the Cassels--Tate pairing and extends this work to $l=2$ using theta groups.  

Section \ref{ssec:class_decomp} contains the material introduced in Section \ref{sec_decom_class_intro}, and Section \ref{sec:Two} gives two miscellaneous applications of the theory of the Cassels--Tate pairing.

\subsection{Acknowledgments} 
We would like to thank Peter Koymans, Wanlin Li, Melanie Matchett Wood, Carlo Pagano, Bjorn Poonen, Karl Rubin, and Richard Taylor for helpful discussion about this work.  

This work was partially conducted during the period the second author served as a Clay Research Fellow. Previously, the author was supported in part by the National Science Foundation under Award No. 2002011.

The first author is supported by the Engineering and Physical
Sciences Research Council  grant EP/V006541/1. During part of the period in which this work was carried out, they were supported by the Max-Planck-Institut für Mathematik in Bonn and the Leibniz fellow programme at the Mathematisches Forschungsinstitut Oberwolfach, and would like to express their thanks for the excellent conditions provided.

\section{Notation and review of \cite{MS21}}
\label{sec:embeddings}
 \subsection{Conventions}
We will keep the same notation as in \cite[Section 2]{MS21}. In an effort to be self-contained, we recall some of our conventions now.

 Throughout, $F$ will denote a global field with fixed separable closure $F^s/F$ and absolute Galois group $G_F=\Gal(\Fsep/F)$.  For each place $v$ of $F$, $F_v$ denotes the completion of $F$ at $v$,  $F_v^s$ denotes a fixed separable closure of $F_v$, and   $G_v$ denotes the Galois group of $F^s_v/F_v$. We write $I_v$ for the inertia subgroup of $G_v$. For each place $v$ we fix an embedding $\Fsep \hookrightarrow \Fvsep$, via which we view $G_v$ as a closed subgroup of $G_F$.
 
Given a topological group $G$, a discrete $G$-module $M$, and $i\geq 0$, $H^i(G, M)$ will denote the continuous group cohomology of $G$ acting on $M$.  We often work with these groups using the language of inhomogeneous $i$-cochains \cite[Section 2]{Neuk08}, the group of which we denote $C^i(G, M)$. We denote by  $Z^i(G,M)$ the subgroup of $C^i(G, M)$ consisting of $i$-cocycles. Given $\phi$ in $C^i(G_F, M)$ or $H^i(G_F, M)$, we write $\phi_v$ as  for the restriction of $\phi$ to $G_v$.

Finally, given a $G_F$-module $M$, we write $M^\vee$ for the dual $G_F$-module $\Hom(M,\, (F^s)^{\times})$, and denote  by $\cup_M$ the cup product constructed from the standard evaluation pairing   $M \times M^{\vee}\rightarrow (\Fsep)^{\times}$  (cf. \cite[Notation 2.2]{MS21}).

\subsection{A review of \cite{MS21}}  \label{ssec:recap}
We briefly recap some of the relevant material from \cite{MS21}. The reader familiar with that work may safely skip this section.

\subsubsection{The category $\SMod_F$}
 In  \cite[Definition 1.1]{MS21} the  category $\SMod_F$ was introduced. Its objects are pairs $(M,\msW)$ consisting of a finite $G_F$-module $M$ of order indivisible by the characteristic of $F$ along with a compact open subgroup $\msW$ of the  restricted product $\tresprod{v} H^1(G_v, M).$ We recall also that:
\begin{itemize}
\item the dual of an object $(M,\msW)$  is the pair $(M^\vee, \msW^\perp)$, where  $\msW^\perp$ is  the orthogonal complement of $\msW$ with respect to the pairing induced by Tate local duality; we denote by $\vee$ the contravariant functor on $\SMod_F$ sending an object to its dual.  
\item a sequence \begin{equation}\label{eq:example_exact}
E = \left[0 \to (M_1,\,\msW_1) \xrightarrow{\,\,\iota\,\,} (M,\, \msW) \xrightarrow{\,\,\pi\,\,} (M_2,\, \msW_2) \to 0\right]
\end{equation} in $\SMod_F$ is said to be exact if  the underlying sequence of $G_F$-modules is exact and if
\[\iota^{-1}(\msW) = \msW_1\quad\text{and}\quad \pi(\msW) = \msW_2.\]
\end{itemize}

  Associated to any object $(M,\msW)$ in $\SMod_F$ is the Selmer group 
  \[\Sel(M,\,\msW) \,=\, \ker\llop H^1(G_F, M) \xrightarrow{\quad} \,\, \resprod_{v\text{ of } F} H^1(G_v, M)  \Big/\msW\rlop.\]
The definition of the morphisms in $\SMod_F$ ensure that the map sending $(M,\msW)$ to its Selmer group defines a functor $\Sel$ from $\SMod_F$ to the category of finite abelian groups. 

\subsubsection{The Cassels--Tate pairing}
The main object of interest in \cite{MS21} is a version of the Cassels--Tate pairing associated to each short exact sequence in $\SMod_F$. 

\begin{thm}[\cite{MS21}, Theorem 1.3]  
Given a short exact sequence $E$ in $\SMod_F$, as depicted in \eqref{eq:example_exact},  there is a bilinear pairing
\begin{equation} \label{cassels_tate_pairing_intro}
\CTP_E\colon \Sel\, M_2\, \times\, \Sel\, M_1^{\vee}\to \QQ/\Z, 
\end{equation}
with left kernel $\pi(\Sel M)$ and right kernel $\iota^\vee(\Sel M^\vee)$.  
\end{thm}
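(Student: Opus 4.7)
The plan is to construct $\CTP_E$ by a cocycle recipe, verify it is well defined, and then identify both kernels via Poitou--Tate global duality.

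First I would build the pairing. Pick a $1$-cocycle representative of $\phi \in \Sel M_2$ and of $\psi \in \Sel M_1^{\vee}$. Using surjectivity of $\pi$ on cochains, lift $\phi$ to $f \in C^1(G_F, M)$ with $\pi f = \phi$. Then $df \in Z^2(G_F, M_1)$ since $\pi(df) = d\phi = 0$. At each place $v$ the Selmer condition provides $e_v \in Z^1(G_v, M)$ whose class lies in $\msW_v$ and satisfies $\pi e_v = \phi_v$, so $c_v := f_v - e_v \in C^1(G_v, M_1)$ has coboundary $(df)_v$. Define
\[
\CTP_E(\phi, \psi) \;=\; \sum_{v} \inv_v\bigl(c_v \cup_{M_1} \psi_v\bigr) \;\in\; \QQ/\Z,
\]
where $\cup_{M_1}$ is the local cup product landing in $H^2(G_v, (F_v^s)^{\times})$ and $\inv_v$ is the local invariant. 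That only finitely many terms are nonzero follows from the fact that $c_v$ and $\psi_v$ are unramified for almost all $v$.

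Next I would verify well-definedness and bilinearity. Changes in the cocycle representative of $\phi$, in the global lift $f$, or in the local cocycles $e_v$ all alter the individual $c_v \cup \psi_v$ either by a local coboundary (killed by passing to cohomology) or by classes whose sum of invariants vanishes by the reciprocity law $\sum_v \inv_v = 0$ on $H^2(G_F, (F^s)^\times)$. Changes in the representative of $\psi$ similarly produce globally trivial contributions via the compatibility of cup products with coboundaries. Bilinearity is immediate from the construction.

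Then I would compute the kernels. The containment $\pi(\Sel M) \subseteq \ker_L \CTP_E$ is direct: if $\phi = \pi\tilde\phi$ with $\tilde\phi \in \Sel M$, take $f$ to be a cocycle representative of $\tilde\phi$, so $df = 0$, and choose $e_v = f_v$ to obtain $c_v = 0$. For the reverse, assume $\CTP_E(\phi, \psi) = 0$ for every $\psi \in \Sel M_1^{\vee}$: the recipe produces a class in $H^2(G_F, M_1)$ whose local restrictions lie in an explicit subgroup cut out by $\msW$, and Poitou--Tate orthogonality (i.e.\ the exact sequence relating $\Sel M_1$, the local cohomology quotients, and $(\Sel M_1^\vee)^\vee$) lets one modify $f$ by a global cochain so that it becomes a cocycle lifting $\phi$ into $\Sel M$. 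The right-kernel statement follows by the symmetry of the construction, noting that the dualized sequence $E^\vee = [0 \to M_2^\vee \xrightarrow{\pi^\vee} M^\vee \xrightarrow{\iota^\vee} M_1^\vee \to 0]$ lives in $\SMod_F$ and that $\CTP_{E^\vee}$ agrees with $\CTP_E$ up to transposition and sign.

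The main obstacle is the ``hard direction'' of the left-kernel computation, namely producing an element of $\Sel M$ lifting $\phi$ from the vanishing of $\CTP_E(\phi, \cdot)$. Classically this is where Poitou--Tate duality is essential; here the additional subtlety is that the local conditions $\msW$ are only assumed to be compact open subgroups of the restricted product, so the diagram chase must be carried out with the generalized Selmer structure of \cite{MS21} rather than with classical unramified or everywhere-trivial conditions. The bookkeeping for group-change maps, which is tabulated in Appendix \ref{app:group_change}, supports this chase, and once the left kernel is identified the right kernel is essentially formal.
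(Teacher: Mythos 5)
Your cocycle recipe is missing the global correction term, and without it the expression $\sum_v \inv_v(c_v \cup_{M_1} \psi_v)$ is not defined. The local term $c_v \cup_{M_1} \psi_v$ is a $2$-cochain in $C^2(G_v, (F_v^s)^\times)$, not a $2$-cocycle: since $d c_v = (df)_v$ and $d\psi_v = 0$, we have $d(c_v \cup \psi_v) = (df)_v \cup \psi_v$, which is generically nonzero. The local invariant map $\inv_v$ acts on $H^2(G_v, (F_v^s)^\times)$, so it cannot be applied. In the paper's construction (Definition~\ref{def:CTP_small}, following the Weil-pairing definition in Milne I.6.9) this is resolved by also choosing a global $2$-cochain $\epsilon \in C^2(G_F, (F^s)^\times)$ satisfying $d\epsilon = \iota^{-1}(df) \cup_{M_1} \overline{\psi}$ — such an $\epsilon$ exists precisely because $\iota^{-1}(df) \cup \overline{\psi}$ is a $3$-cocycle and $H^3(G_F, (F^s)^\times) = 0$ for a global field — and then taking $\gamma_v = \iota^{-1}(f_v - \overline{\phi}_{v,M}) \cup_{M_1} \overline{\psi}_v - \epsilon_v$, which \emph{is} a $2$-cocycle. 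Your later claims that changing the various choices alters things "by a local coboundary" or by globally trivial contributions can only be made precise once every $\gamma_v$ is an actual cocycle, so the omission is load-bearing rather than cosmetic.

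Beyond that, your outline of the kernel computation is essentially the standard Poitou--Tate argument and the right-kernel-by-duality reduction matches the duality identity \eqref{eq:intro_symm}, but it is left at a level of generality where the hard step ("modify $f$ by a global cochain so that it becomes a cocycle lifting $\phi$ into $\Sel M$") is asserted rather than carried out. Note also that this theorem is quoted from \cite{MS21}; the present paper recalls only the definition of the pairing, not the proof of non-degeneracy, so a complete self-contained argument would need to supply the nine-term Poitou--Tate sequence chase in the generality of arbitrary compact open local-conditions subgroups $\msW$, which is precisely where \cite{MS21} does its real work.
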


Two basic properties of the pairing that are established in  \cite{MS21} are the following: 
\begin{itemize}
\item  Given $E$ as above, and identifying $M_2$ with $(M_2^{\vee})^{\vee}$ in the standard way, we have  the \textit{duality identity}
\begin{equation} \label{eq:intro_symm}
\CTP_{E}(\phi, \psi) = \CTP_{E^\vee}(\psi, \phi)
\end{equation}
for all $\phi$ in $\Sel \,M_2$ and $\psi$ in $\Sel\, M_1^{\vee}$.  
\item Given a morphism of short exact sequences 
\begin{equation}
\label{eq:nat_setup}
\begin{tikzcd}
E\,=\,\big[\,0 \arrow{r} & M_{1}\arrow{d}{f_1} \arrow{r}{\iota}& M \arrow{d}{ f}    \arrow{r}{\pi}& M_{2} \arrow{d}{ f_2}    \arrow{r} & 0\,\big] \\
E'\,=\,\big[\,0 \arrow{r} & M_{1}' \arrow{r}{\iota'} & M' \arrow{r}{\pi'} & M_{2}' \arrow{r} & 0\,\big]
\end{tikzcd}
\end{equation}
in $\SMod_F$, we have the \textit{naturality property} 
\begin{equation*}
\label{eq:natural}
\textup{CTP}_{E } (\phi, \,{f_1^{\vee}}(\psi) ) \,=\, \textup{CTP}_{E'}(f_{2} (\phi), \psi),
\end{equation*}
for all $\phi$ in $\Sel\, M_{2 }$ and $\psi$ in $\Sel\, (M_{1}')^{\vee}$.
\end{itemize}

We will occasionally need to work with the explicit definition of the Cassels--Tate pairing in this paper, so we include this next. This pairing is modeled on the Weil pairing definition of the Cassels--Tate pairing for abelian varieties \cite[I.6.9]{Milne86}. That the various choices involved can be made, and that the result is independent of these choices, is justified in \cite[Section 3]{MS21}. 

\begin{defn}[\cite{MS21}, Definition 3.2] \label{def:CTP_small}
Take $E$ to be a short exact sequence in $\SMod_F$, as depicted in \eqref{eq:example_exact}. 
Take $\phi \in \Sel\, M_2$ and $\psi \in \Sel\, M_1^{\vee}$. To define $\text{CTP}_E(\phi, \psi)$ we make  `global' choices consisting of:
\begin{itemize}
\item  cocycles $\overline{\phi}$ and $\overline{\psi}$  representing $\phi$ and $\psi$ respectively,
\item a cochain $f\in C^1( G_{F}, M)$ satisfying
 $\overline{\phi} = \pi \circ f,$ and 
 \item a cochain $\epsilon$ in $C^2(G_F, (F^s)^{\times})$ so
\[d\epsilon \,=\, \iota^{-1}(df) \,\cup_{M_1}\, \overline{\psi}.\]
\end{itemize}   

We further make `local' choices consisting of:
\begin{itemize}
 \item cocycles $\overline{\phi}_{v, M}$ in $Z^1(G_v, M)$ for each place $v$ of $F$, so that  $\pi(\overline{\phi}_{v, M}) = \overline{\phi}_v$,
and so that $(\overline{\phi}_{v, M})_v$ represents a class in $\msW$. 
\end{itemize}
Then 
\[\gamma_v \,=\, \iota^{-1}(f_v - \overline{\phi}_{v, M}) \,\cup_{M_1} \,\overline{\psi}_v \,-\, \epsilon_v \]
lies in $Z^2\left(G_v, (F^s)^{\times}\right)$ for each $v$, and we have
\[\text{CTP}_E(\phi, \psi) = \sum_{v \text{ of }F} \inv_v(\gamma_v).\]
 \end{defn}

\section{Independence of the choice of embeddings} \label{sec:embedding_invariance}
In this section we address a technical point concerning the category $\SMod_F$ that is needed in the proof of Theorem \ref{thm:CT_shap}. In what follows, we refer to Appendix \ref{app:group_change} for the relevant background for various operations on cohomology.

\begin{notat}
Recall that we have fixed a field embedding $ \Fsep \hookrightarrow \Fsep_v$ for each place $v$ of $F$. Call this embedding $i_v$. This embedding determines the embedding of $G_v$ in $G_F$, and these embeddings are used in the construction of $\SMod_F$.

Suppose we have  a second set of embeddings $i'_v \colon \Fsep \hookrightarrow \Fsep_v$. Then there is $\tau_v$ in $G_F$ so $i'_v$ equals $i_v \circ \tau_v$, and the associated decomposition group is $\tau_v^{-1} G_v \tau_v$. Given a discrete $G_F$-module $M$, we have a natural isomorphism
\[\text{conj}_{\text{loc}}\colon\resprod_v H^1(G_v, M) \isoarrow \resprod_v H^1(\tau_v^{-1} G_v \tau_v, M)\]
given by conjugation at each place.

Taking $\SMod'_F$ to be the variant of $\SMod_F$ defined using the embeddings $i'_v$ instead of $i_v$, we can define an equivalence of categories $\SMod_F \to \SMod'_F$
by
\[(M, \msW) \mapsto (M, \text{conj}_{\text{loc}}(\msW)).\]
This respects duality and  Selmer groups. The following proposition shows it also respects the Cassels--Tate paring.
\end{notat}

\begin{prop} \label{prop:independence_of_embeddings}
Given $\SMod_F$ and $\SMod_F'$ as above, take $E$ to be a short exact sequence in $\SMod_F$, and take $E'$ to be the corresponding sequence in $\SMod_F'$. Then
\[\CTP_E = \CTP_{E'}.\]
\end{prop}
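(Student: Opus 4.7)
The plan is to verify the equality by a direct comparison of the cocycle formulas for $\CTP_E$ and $\CTP_{E'}$ given by Definition~\ref{def:CTP_small}. Write $G_v^{(1)} \subset G_F$ for the decomposition group at $v$ determined by $i_v$, and $G_v^{(2)} = \tau_v^{-1}G_v^{(1)}\tau_v$ for the one determined by $i'_v$. The first step is to note that the two Selmer groups agree literally as subgroups of $H^1(G_F, M)$: for any $\phi\in H^1(G_F, M)$ one has $\phi|_{G_v^{(2)}} = \textup{conj}_{\tau_v^{-1}}\bigl(\phi|_{G_v^{(1)}}\bigr)$ (the standard compatibility of restriction with conjugation of subgroups), so $\phi|_{G_v^{(1)}} \in \msW$ at every place if and only if $\phi|_{G_v^{(2)}} \in \textup{conj}_{\textup{loc}}(\msW)$ at every place. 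Both pairings are therefore defined on the same groups $\Sel M_2\times \Sel M_1^\vee$.

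\textbf{Shared global data, conjugated local data.} Fix $\phi \in \Sel M_2$ and $\psi \in \Sel M_1^\vee$ and choose global data $(\overline{\phi},\, \overline{\psi},\, f,\, \epsilon)$ as in Definition~\ref{def:CTP_small}. Since these choices involve only $G_F$, they serve as global data for $E'$ as well. For the local data, take any valid choice $\overline{\phi}_{v,M}\in Z^1(G_v^{(1)}, M)$ for $E$ and define, for each place $v$,
\[
\overline{\phi}'_{v,M}(\sigma) \,:=\, \tau_v^{-1}\,\overline{\phi}_{v,M}(\tau_v\sigma\tau_v^{-1}) \,+\, d\tilde c_v(\sigma) \qquad (\sigma\in G_v^{(2)}),
\]
where $\tilde c_v := \tau_v^{-1} f(\tau_v)\in M$. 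A direct calculation gives $\overline{\phi}|_{G_v^{(2)}}(\sigma) - \tau_v^{-1}\overline{\phi}(\tau_v\sigma\tau_v^{-1}) = d\bigl(\tau_v^{-1}\overline{\phi}(\tau_v)\bigr)(\sigma)$, and since $\pi(\tilde c_v) = \tau_v^{-1}\overline{\phi}(\tau_v)$, the coboundary correction is exactly what is needed to force $\pi(\overline{\phi}'_{v,M}) = \overline{\phi}|_{G_v^{(2)}}$. As $\overline{\phi}'_{v,M}$ differs from $\textup{conj}_{\tau_v^{-1}}(\overline{\phi}_{v,M})$ by a coboundary, its class equals $\textup{conj}_{\tau_v^{-1}}$ of the class of $\overline{\phi}_{v,M}$, and therefore $(\overline{\phi}'_{v,M})_v$ lies in $\textup{conj}_{\textup{loc}}(\msW)$ as required.

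\textbf{Matching invariants place-by-place.} With these choices in hand, the 2-cocycle $\gamma_v'\in Z^2(G_v^{(2)}, (F^s)^\times)$ produced by Definition~\ref{def:CTP_small} for $E'$ can be compared to $\textup{conj}_{\tau_v^{-1}}(\gamma_v)$. The naturality of cup products and of $\iota^{-1}$ under conjugation, together with the defining relation $d\epsilon = \iota^{-1}(df)\cup_{M_1}\overline{\psi}$, produces an explicit cochain $\delta_v\in C^1(G_v^{(2)}, (F^s)^\times)$ with $\gamma_v' = \textup{conj}_{\tau_v^{-1}}(\gamma_v) + d\delta_v$. The local invariant satisfies $\inv_v \,=\, \inv'_v \circ \textup{conj}_{\tau_v^{-1}}$ on $H^2(G_v^{(1)}, (F^s)^\times)$: this is the statement that the Brauer invariant of a class in $H^2(G_v, (F^s)^\times)$ does not depend on the particular embedding $F^s\hookrightarrow F^s_v$ used to pass to $\textup{Br}(F_v)$. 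Combining these two facts, $\inv'_v(\gamma_v') = \inv_v(\gamma_v)$ for every place $v$, and summing over $v$ gives $\CTP_{E'}(\phi,\psi) = \CTP_E(\phi,\psi)$.

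\textbf{Expected difficulty.} The substantive step is the cochain identity $\gamma_v' = \textup{conj}_{\tau_v^{-1}}(\gamma_v) + d\delta_v$. Because $f$ and $\epsilon$ are not cocycles, $f|_{G_v^{(2)}}$ and $\epsilon|_{G_v^{(2)}}$ do not equal $\textup{conj}_{\tau_v^{-1}}(f|_{G_v^{(1)}})$ and $\textup{conj}_{\tau_v^{-1}}(\epsilon|_{G_v^{(1)}})$ up to coboundaries of $0$-cochains; additional terms involving $df$ and $d\epsilon$ appear. The relation $d\epsilon = \iota^{-1}(df)\cup_{M_1}\overline{\psi}$ is precisely what is needed for these extra terms to cancel inside the cup product appearing in $\gamma_v'$. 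The formalism of group-change homomorphisms developed in Appendix~\ref{app:group_change} is designed to make this bookkeeping tractable.
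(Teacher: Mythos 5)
Your strategy and setup are correct and coincide with the paper's own approach: keep the global data $(\overline{\phi},\overline{\psi},f,\epsilon)$ fixed, conjugate the picture back from $\tau_v^{-1}G_v\tau_v$ to $G_v$ using the compatibility $\inv'_v = \inv_v\circ\textup{conj}_{\tau_v}$, and choose the new local lift so that the discrepancy is a coboundary. Your explicit choice $\overline{\phi}'_{v,M} = \textup{conj}_{\tau_v^{-1}}(\overline{\phi}_{v,M}) + d\tilde c_v$ with $\tilde c_v = \tau_v^{-1}f(\tau_v)$ is exactly the choice the paper makes (in the paper's notation $\tilde c_v$ is $\textup{conj}_{\tau_v^{-1}}(h_{\tau_v}(f))$, since $h_{\tau_v}(f)$ on a $1$-cochain is the $0$-cochain $f(\tau_v)$), and your verification that $\pi(\overline{\phi}'_{v,M}) = \overline{\phi}|_{\tau_v^{-1}G_v\tau_v}$ is correct.

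The gap is that the ``substantive step,'' the cochain identity $\gamma_v' = \textup{conj}_{\tau_v^{-1}}(\gamma_v) + d\delta_v$, is asserted rather than proved, and you stop at the point where the actual content of the proof begins. This identity is not a formal consequence of naturality of cup products plus the relation $d\epsilon = \iota^{-1}(df)\cup_{M_1}\overline{\psi}$; one has to produce $\delta_v$. The paper does this by introducing the explicit chain homotopy
\[
h_{\tau}(f)(\sigma_1, \dots, \sigma_n) = \sum_{r= 0}^{n} (-1)^r f(\sigma_1, \dots, \sigma_r, \,\tau,\, \tau^{-1} \sigma_{r+1}\tau, \dots, \tau^{-1}\sigma_n  \tau),
\]
which satisfies $\textup{conj}_\tau^{G}(g) - g = dh_{\tau}(g) + h_{\tau}(dg)$ together with the Leibniz-type rule
\[
h_{\tau}(f \cup g) \,=\, h_{\tau}(f) \cup \tau g \,+\, (-1)^{k}\cdot f \cup h_{\tau}(g).
\]
Applying the first identity to $f$, $\overline{\psi}$, and $\epsilon$, and then the second to expand $h_{\tau}(d\epsilon) = h_\tau\bigl(\iota^{-1}(df)\cup\overline{\psi}\bigr)$, one finds after cancellation that the difference of the two local expressions is the coboundary of $-\iota^{-1}\bigl(f_v - \overline{\phi}_{v,M}\bigr) \cup h_{\tau}(\overline{\psi})_v - h_{\tau}(\epsilon)_v$. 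This is precisely the $\textup{conj}_{\tau_v}(\delta_v)$ you need; without writing it down, the proposal records the right framework but not a proof. I would also note that the claim $\inv_v = \inv'_v\circ\textup{conj}_{\tau_v^{-1}}$, which the paper uses implicitly, deserves a sentence of justification: the two embeddings $i_v$ and $i'_v = i_v\circ\tau_v$ induce the two identifications of $H^2(\cdot,(F^s)^\times)$ with $\textup{Br}(F_v)$, and conjugation by $\tau_v$ intertwines them because $i'_v = i_v\circ\tau_v$.
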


The proof relies on the following. Given a profinite group $G$, a closed subgroup $H$ of $G$,   a discrete $H$-module $M$, and $\tau\in G$, we can consider the $\tau H \tau^{-1}$-module $\tau M$, as detailed in Definition \ref{defn:exact_gc_functors}. As in Definition \ref{defn:conj_exp}, for each $n\geq 0$ we   have a conjugation operation 
\[\text{conj}_\tau^{H} \colon C^n(H, M) \to C^n(\tau H\tau^{-1},\, \tau M)\]
on cochains. In the case $G = H$, we have a natural isomorphism $M \cong \tau M$ given by the operation $c_\tau$ of Figure \ref{fig:nat_mor}, and composing this with   conjugation gives a chain map
\[\text{conj}_\tau^{G} \colon C^n(G, M) \to C^n(G, M).\]

This chain map is homotopic to the identity. Explicitly, we may follow \cite[Appendix 2]{Chung20} or \cite[4.5.5]{Neko06} to define a map
\begin{equation} \label{eq:explicit_homotopy}
h_{\tau} \colon C^{n+1}(G, M) \to C^n(G, M)
\end{equation}
by the formula
\[h_{\tau}(f)(\sigma_1, \dots, \sigma_n) = \sum_{r= 0}^{n} (-1)^r f(\sigma_1, \dots, \sigma_r, \,\tau,\, \tau^{-1} \sigma_{r+1}\tau, \dots, \tau^{-1}\sigma_n  \tau).\]
 On $C^0$  we take $h_{\tau}$ to be the zero map.

Given any $f$ in $C^n(G, M)$, this construction satisfies
\begin{equation} \label{eq:conjugation_homotopy_relation}
\text{conj}_\tau^{G} (f) - f \,=\, dh_{\tau}(f) + h_{\tau}(df).
\end{equation}

We may also verify that, given $G$-modules $M_1$ and $M_2$, integers $k,j\geq 0$, and 
\[f \in C^k(G, M_1) \quad\text{and}\quad g \in C^j(G, M_2),\]
 we have the identity
\begin{equation}
    \label{eq:homotopy_and_cup_product}
h_{\tau}(f \cup g) \,=\, h_{\tau}(f) \cup \tau g \,+\, (-1)^{k}\cdot f \cup h_{\tau}(g) \quad\text{in}\quad C^{k+j - 1}(G, M_1 \otimes M_2),
\end{equation}
where $\tau g$ is shorthand for $\text{conj}_\tau^{G} g$. These identities are most easily checked by considering the corresponding operations on homogeneous cochains (cf. Section \ref{sec:setup_appendix}).

\begin{proof}[Proof of Proposition \ref{prop:independence_of_embeddings}]
Take
\[E = \big[0 \to M_1 \xrightarrow{\,\,\iota\,\,} M \xrightarrow{\,\,\pi\,\,} M_2 \to 0\big]\]
to be a short exact sequence in $\SMod_F$, and choose $\phi \in \Sel M_2$ and $\psi \in \Sel M_1^{\vee}$. Take
\[(\overline{\psi},\, \overline{\phi},\, f, \,(\overline{\phi}_{v, M})_v, \,\epsilon)\]
to be a tuple for computing $\CTP_E(\phi, \psi)$ as in Definition \ref{def:CTP_small}. With $h_\tau$ as in \eqref{eq:explicit_homotopy} above, the proposition reduces to the claim that, for each place $v$ of $F$,
\[\iota^{-1}\left(f_v - \overline{\phi}_{v, M}\right) \cup \overline{\psi}_v \, -\, \epsilon_v\]
represents the same class in $H^2(G_v, \,(\Fsep_v)^{\times})$ as
\[\iota^{-1}\left((\tau f)_v - d h_{\tau}(f)_v - \overline{\phi}_{v, M}\right) \,\cup\, \,(\tau \overline{\psi})_v \, -\, (\tau\epsilon)_v.\]
To prove this claim, we start by using \eqref{eq:conjugation_homotopy_relation} to calculate that their difference equals
\[\iota^{-1}\left(f_v - \overline{\phi}_{v, M}\right) \cup dh_{\tau}(\overline{\psi})_v \, +\, \iota^{-1}\left(h_{\tau}(df)_v\right) \cup \left(\tau \overline{\psi}\right)_v \, -\, h_{\tau}(d\epsilon)_v \, -\, dh_{\tau}(\epsilon)_v.\]
From \eqref{eq:homotopy_and_cup_product}  we have
\[h_{\tau}(d\epsilon)_v \,=\, \iota^{-1}\left(h_{\tau}(df)_v\right) \cup \left(\tau \overline{\psi}\right)_v\, +\, \iota^{-1}(df)_v \cup h_{\tau}\left( \overline{\psi}_v\right),\]
so the difference of the expressions is equal to
\[\iota^{-1}\left(f_v - \overline{\phi}_{v, M}\right) \cup dh_{\tau}(\overline{\psi})_v \,-\, \iota^{-1}(df)_v \cup h_{\tau}\left( \overline{\psi}_v\right) \,-\, dh_{\tau}(\epsilon)_v,\]
which is the coboundary of 
\[-\iota^{-1}\left(f_v - \overline{\phi}_{M, v}\right) \cup h_{\tau}\left(\overline{\psi}\right)_v\,-\, h_{\tau}(\epsilon)_v.\]
The two expressions thus represent the same cohomology class, giving the claim.
\end{proof}

\section{Changing the global field}
\label{sec:fields}

In this section we consider the behavior of Selmer groups under change of field and prove Theorem \ref{thm:fields_main_intro}. As in the previous section, we will frequently refer to Appendix \ref{app:group_change} for the relevant background on cohomological operations related to field change.

Take $K/F$ to be an extension of global fields with $K$ contained in $F^s$. Given a $G_{K}$-module $M$ of order indivisible by the characteristic of $F$,  we consider the corresponding induced $G_{F}$-module 
\[\Ind_{K/F}M = \Z[G_F] \otimes_{\Z[G_K]} M.\] 
We wish to extend  the assignment $M\mapsto \Ind_{K/F}M$ to a functor  from $\SMod_{K}$ to  $\SMod_{F}$. For this we need to understand how local conditions subgroups behave under induction. This uses a local version of the Shapiro isomorphism, which we turn to now.

\subsection{Local and global Shapiro isomorphisms} \label{ssec:shap_loc} 
For $M$ as above, Shapiro's lemma gives a natural isomorphism
\begin{equation} \label{eq:global_shapiro_map}
\shap \colon H^1(G_{K}, \,M) \isoarrow  H^1(G_{F}, \Ind_{K/F} M)
\end{equation}
that we call the global Shapiro isomorphism. 

An equivalent  definition to the following local version of \eqref{eq:global_shapiro_map} can be found in \cite[3.1.2]{SkUr14}, with \eqref{eq:inverse_SU} appearing as \cite[(3.3)]{SkUr14}.
\begin{defn}
\label{defn:shap_loc}
Given a place $v$ of $F$ and a place $w$ of $K$ over $v$, the fields $F^s_v$ and $K_w^s$ are isomorphic. We have embeddings
\[i_v \colon \Fsep \to \Fsep_v \quad\text{and}\quad i_w \colon \Fsep \to K^s_w\]
corresponding to these places. Fixing some isomorphism between $F^s_v$  and $K^s_w$, there is some choice of $\tau_w \in G_F$ so that $i_v$ is identified with $i_w \circ \tau_w$. We then have
\[G_w = G_K \cap \tau_w^{-1} G_v \tau_w.\] 
The set  $\{\tau_w\}_{w\mid v}$ gives a set of representatives for the double coset  $G_v\backslash G_F/G_K$, and for any discrete $G_K$-module $M$ we can consider the isomorphism
\[\restr{\shap_{G_K}^{G_F}}{G_v} \colon \bigoplus_{w |v} H^k(G_w, M) \isoarrow  H^k(G_v, \Ind_{K/F} M)\]
of Definition \ref{defn:shap_GHC}. We abbreviate $\restr{\shap_{G_K}^{G_F}}{G_v}$ as  $\shap_{\text{loc}, \,v}$. In the case $k = 1$, we can apply \eqref{eq:sh_HGC_inertia} with $C_1 = I_v$ to show that this restricts to an isomorphism
\begin{equation}
    \label{eq:KF_unr}
\shap_{\text{loc}, \,v}^{\text{ur}} \colon \bigoplus_{w |v} H^1_{\text{ur}}(G_w, M) \isoarrow  H^1_{\text{ur}}(G_v, \Ind_{K/F} M).
\end{equation}
Combining the maps  $\shap_{\text{loc}, \,v}$ for every $v$  gives a topological isomorphism of restricted products
\begin{equation} \label{eq:sha_loc_version}
\shap_{\text{loc}}: \resprod_{w \text{ of } K} H^1(G_w, M) \isoarrow \resprod_{v \text{ of } F} H^1(G_v, \Ind_{K/F} M).
\end{equation}
\end{defn}

\subsection{The functor $\Ind_{K/F}$} \label{ssec:functor_ind}   We now define the sought functor $\Ind_{K/F}$.
\begin{defn}
\label{defn:Ind}
The  isomorphism \eqref{eq:sha_loc_version} sends compact open subgroups to compact open subgroups, so we may define a faithful additive functor
\[\Ind_{K/F} \colon \SMod_{K} \royarrow \SMod_{F}\]
by setting
\[(M, \msW)\,\mapsto\, \left(\Ind_{K/F} M, \,\shap_{\text{loc}}(\msW)\right).\]
 \end{defn}
 
A special case of \eqref{eq:sh_HGC_C_change} gives the commutative square
\begin{equation}
\label{eq:inverse_SU}
\begin{tikzcd}
H^k(G_{K}, M)\arrow{r}{\shap} \arrow[d, swap, "\bigoplus_w \res_{G_w}^{G_K}"] & H^k(G_F, \Ind_{K/F} M)  \arrow[d, "\res_{G_v}^{G_F}"]\\
\bigoplus_{w| v} H^k(G_w, M) \arrow{r}{\shap_{\text{loc},\, v}} & H^k(G_v, \Ind_{K/F} M),
\end{tikzcd}
\end{equation}
from which we see that the global Shapiro map gives an isomorphism
\[\Sel_KM \isoarrow \Sel_F(\Ind_{K/F}M).\]
Here the subscript indicates whether a functor is being considered on $\SMod_F$ or $\SMod_K$.
Consequently, the Shapiro isomorphism  defines a natural isomorphism
\[\shap \colon \Sel_K \natiso \Sel_F \circ \Ind_{K/F}.\]

We now consider how the functor $\Ind_{K/F}$ combines with the duality functors on $\Sel_F$ and $\Sel_K$. For this we require the following definition.

\begin{defn} \label{def:t_iso}
Given $(M, \msW)$ in $\SMod_{K}$, we consider the perfect $G_{F}$-equivariant bilinear pairing
\begin{equation}
\label{eq:Pt}
t_{M}\colon \Ind_{K/F} M \,\times\, \Ind_{K/F} M^{\vee} \xrightarrow{\quad} (F^s)^{\times}
\end{equation}
defined by
\[t_{M}\big( [\sigma_1] \otimes m_1,\, [\sigma_2] \otimes m_2 \big) \,=\,  \begin{cases} \sigma_1 \big\langle m_1, \, \sigma_1^{-1}\sigma_2 m_2\big\rangle &\text{ if } \sigma_1G_K= \sigma_2G_K, \\ 0&\text{ otherwise,}\end{cases}\]
where $\langle\,,\,\rangle$ is the evaluation pairing from $M \otimes M^{\vee}$ to $(F^s)^{\times}$. By tensor-hom adjunction, this pairing corresponds to a $G_{F}$-equivariant isomorphism
\begin{equation} \label{eq:where_t_is_defined}
 t_M: \Ind_{K/F} M^{\vee} \isoarrow (\Ind_{K/F} M)^{\vee}.
 \end{equation}
 \end{defn}
 
Per the following proposition,  these maps define a natural isomorphism
\begin{equation*}
t \colon \Ind_{K/F} \circ \vee_K \natiso \vee_F \circ \Ind_{K/F}
\end{equation*}
on $\SMod_K$.

 \begin{prop} \label{prop:duality_functor}
Let $(M,\mathscr{W})$ be an object in $\SMod_{K}$. We then have
\[t\left(\shap_{\textup{loc}}(\mathscr{W}^\perp)\right)=\shap_\textup{loc}(\mathscr{W})^\perp.\]
In particular, $t$ gives an isomorphism 
\[t\colon \Ind_{K/F} ~(M,\mathscr{W})^\vee \isoarrow \left(\Ind_{K/F}(M,\mathscr{W})\right)^\vee.\]
\end{prop}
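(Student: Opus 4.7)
The plan is to use the local nature of orthogonal complements in a restricted product to reduce the claim to a place-by-place compatibility of local Tate pairings, and then to prove that compatibility by factoring $t_M$ through a natural ``trace'' map whose effect on cohomology is corestriction. Concretely, via the Mackey-type decomposition
\[
H^1(G_v, \Ind_{K/F}M) \,\cong\, \bigoplus_{w\mid v} H^1(G_w, M)
\]
furnished by $\shap_{\text{loc},v}$, and the analogous decomposition for $M^\vee$, the identity $t(\shap_{\text{loc}}(\msW^\perp)) = \shap_{\text{loc}}(\msW)^\perp$ reduces to showing that, for each place $v$ of $F$ and any $\alpha = (\alpha_w)_{w\mid v}$ and $\beta = (\beta_w)_{w\mid v}$,
\[
\sum_{w\mid v} \inv_w\bigl(\alpha_w \cup_M \beta_w\bigr) \,=\, \inv_v\bigl(\shap_{\text{loc},v}(\alpha)\cup_{t_M}\shap_{\text{loc},v}(\beta)\bigr).
\]

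The off-diagonal terms vanish: by construction $t_M$ annihilates simple tensors $[\sigma_1]\otimes m_1$ and $[\sigma_2]\otimes m_2$ whenever $\sigma_1 G_K \ne \sigma_2 G_K$, and representatives coming from summands indexed by distinct double cosets $G_v\tau_w G_K$ cannot lie in a common left $G_K$-coset. For the diagonal term at a fixed $w\mid v$, I would factor the relevant $G_F$-pairing as
\[
\Ind_{K/F}M \,\otimes\, \Ind_{K/F}M^\vee \,\xrightarrow{\,\mu\,}\, \Ind_{K/F}(M \otimes M^\vee) \,\xrightarrow{\,\Ind\langle\,,\,\rangle_M\,}\, \Ind_{K/F}(F^s)^\times \,\xrightarrow{\,\operatorname{tr}\,}\, (F^s)^\times,
\]
where $\mu$ is the natural $G_F$-linear map sending $[\sigma_1]\otimes m_1\otimes[\sigma_2]\otimes m_2$ to $[\sigma_1]\otimes(m_1 \otimes \sigma_1^{-1}\sigma_2 m_2)$ on the summand where $\sigma_1 G_K = \sigma_2 G_K$ (and to zero otherwise), and $\operatorname{tr}$ is the evaluation map $[\sigma]\otimes x \mapsto \sigma x$. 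A direct computation shows that $\operatorname{tr}\circ\Ind\langle\,,\,\rangle_M\circ\mu$ agrees with $t_M$.

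The proof is then concluded by invoking two pieces of standard formalism from Appendix \ref{app:group_change}: first, the cup-product naturality of Shapiro identifies $\shap_{\text{loc},v}(\alpha_w\cup_M \beta_w)$, a class in $H^2(G_v, \Ind_{K/F}(F_v^s)^\times)$, with $\shap_{\text{loc},v}(\alpha_w)\cup\shap_{\text{loc},v}(\beta_w)$ computed using the pairing $\Ind\langle\,,\,\rangle_M\circ\mu$; and second, the map on $H^2$ induced by $\operatorname{tr}$, post-composed with Shapiro, is precisely the corestriction $\cores^{G_v}_{G_w}\colon H^2(G_w,(F_v^s)^\times)\to H^2(G_v,(F_v^s)^\times)$. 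Combining these, $\shap_{\text{loc},v}(\alpha_w)\cup_{t_M}\shap_{\text{loc},v}(\beta_w) = \cores^{G_v}_{G_w}(\alpha_w\cup_M\beta_w)$, and the desired equality of invariants then follows from the standard compatibility $\inv_v\circ\cores^{G_v}_{G_w}=\inv_w$ of local class field theory. The main obstacle is the bookkeeping in these two compatibility checks, particularly verifying the sign conventions built into the Shapiro isomorphism in the appendix and checking that the induced pairing on $\Ind_{K/F}M \otimes \Ind_{K/F} M^\vee$ coming from cup-product naturality really is the one given by $\mu$; but these are essentially formal once the machinery of the group-change appendix is in place.
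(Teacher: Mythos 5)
Your proposal is correct and follows essentially the same route as the paper's proof. You factor $t_M$ as $\nu \circ \Ind_{K/F}\langle\,,\,\rangle \circ P$ (your $\operatorname{tr}$, $\Ind\langle\,,\,\rangle_M$, $\mu$ are the paper's $\nu$, $\Ind_{K/F}B$, $P$), reduce to a place-by-place invariant identity, and combine a cup-compatibility of Shapiro with $\inv_v\circ\cores_{w\mid v}=\inv_w$ (the paper's Lemma~\ref{lem:inv_cor}). The only real difference is presentational: you work with the forward $w$-component of $\shap_{\text{loc},v}$ and handle the cross-terms by an explicit ``off-diagonal vanishes'' argument, whereas the paper works throughout with the components $\text{nrt}_w=\nu_1\circ\res_{G_w}\circ\tau_w^{-1}$ of the \emph{inverse} Shapiro map, for which cup-compatibility (\eqref{eq:coch_cup_nu}) is immediate on cochains since each of $\nu_1$, restriction, and conjugation is trivially cup-compatible; this makes the diagonal/off-diagonal bookkeeping automatic. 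One small caveat in your write-up: Lemma~\ref{lem:gc_appendix}(2) as stated is a cochain identity for the \emph{global} transversal Shapiro ${}^T\shap^H_G$, and what you actually need is its analogue for the restricted/local Shapiro of Definition~\ref{defn:shap_GHC}, which the paper verifies directly rather than citing; this is formal as you say, but is not quite a verbatim citation of the appendix.
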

We will prove this proposition in Section \ref{ssec:field_proofs}. From this proposition, the tuple $(\Ind_{K/F}, \, t)$ forms a morphism
\[(\Ind_{K/F}, \, t) \colon (\SMod_{K}, \,\vee_K) \royarrow (\SMod_{F}, \,\vee_F)\]
of additive categories with duality in the sense of \cite[Definition 1.1.8]{MR2181829}. 

The functor $\Ind_{K/F}$ is exact, and we may compare the Cassels--Tate pairing of a short exact sequence $E$ in $\SMod_K$ with the pairing for $\Ind_{K/F} E$. The following theorem, which formalizes Theorem \ref{thm:fields_main_intro}, shows that these pairings are essentially equivalent.
\begin{thm}
\label{thm:fields_main}
\label{thm:CT_shap}
 Choose any short exact sequence 
\[E = \left[0 \to M_1 \xrightarrow{\,\,\iota\,\,} M \xrightarrow{\,\,\pi\,\,} M_2 \to 0\right]\]
in $\SMod_K$.  Then, for any $\phi$ in $\Sel_K\, M_2$ and $\psi$ in $\Sel_K\, M_1^{\vee}$, we have 
 \begin{equation}
\label{eq:CT_shap}
\CTP_{\Ind_{K/F} E}\big(\shap(\phi), \,\,t(\shap(\psi))\big) = \CTP_E(\phi, \, \psi).
\end{equation}
\end{thm}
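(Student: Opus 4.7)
The plan is to prove Theorem \ref{thm:CT_shap} by a direct cocycle-level computation using the explicit recipe in Definition \ref{def:CTP_small}. Starting from a tuple of global choices $(\overline{\phi}, \overline{\psi}, f, \epsilon)$ and local choices $(\overline{\phi}_{w, M})_w$ that compute $\CTP_E(\phi, \psi)$, I will construct a tuple computing $\CTP_{\Ind_{K/F}E}(\shap(\phi), t(\shap(\psi)))$ by applying Shapiro's isomorphism at the level of cochains. Concretely, using the chain-level Shapiro map $\shap \colon C^k(G_K, M) \to C^k(G_F, \Ind_{K/F}M)$ of the appendix, the natural candidate tuple is $(\shap(\overline{\phi}), t(\shap(\overline{\psi})), \shap(f), \shap(\epsilon))$ globally, together with the local cocycles $\shap_{\text{loc}, v}(\overline{\phi}_{w, M})_{w|v}$.

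Next I would verify that this candidate is a legitimate tuple in the sense of Definition \ref{def:CTP_small}. The lifting condition $\Ind(\pi) \circ \shap(f) = \shap(\overline{\phi})$ and the fact that $\shap_{\text{loc}, v}(\overline{\phi}_{w, M})_{w|v}$ maps to $\shap(\overline{\phi})$ restricted to $G_v$ both follow from naturality of the Shapiro map in the coefficient module combined with the commutative square \eqref{eq:inverse_SU}. The fact that the local classes $(\shap_{\text{loc}, v}(\overline{\phi}_{w, M})_{w|v})_v$ live in the Shapiro-transported local conditions subgroup is built into the definition of the functor $\Ind_{K/F}$. The delicate identity is the coboundary equation
\[ d\,\shap(\epsilon) \;=\; \Ind(\iota)^{-1}\big(d\,\shap(f)\big) \,\cup_{t_{M_1}}\, t\big(\shap(\overline{\psi})\big), \]
which reduces, upon applying Shapiro, to the compatibility of cochain-level Shapiro with cup products, recognizing that the pairing $t_{M_1}$ of Definition \ref{def:t_iso} is set up precisely so that
\[ \shap(x) \,\cup_{t_{M_1}}\, t(\shap(y)) \;=\; \shap\!\left(x \,\cup_{M_1}\, y\right) \]
for cochains $x \in C^{\bullet}(G_K, M_1)$ and $y \in C^{\bullet}(G_K, M_1^{\vee})$; this compatibility is the group-change content collected in Appendix \ref{app:group_change}.

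With the tuple validated, the local term $\gamma_v \in Z^2(G_v, (F^s)^{\times})$ produced by Definition \ref{def:CTP_small} for the induced pairing becomes, by the same cup-product compatibility applied locally at $v$ via $\shap_{\text{loc}, v}$, exactly the image under $\shap_{\text{loc}, v}$ of the tuple $(\gamma_w)_{w|v}$, where $\gamma_w$ is the local term for the original pairing. Here one uses the restriction version of Shapiro (the $v$-local analogue of \eqref{eq:inverse_SU}) to match the two constructions cocycle-by-cocycle. Summing over $v$, the theorem then reduces to the standard identity
\[ \inv_v\!\left(\shap_{\text{loc}, v}(\gamma_w)_{w|v}\right) \;=\; \sum_{w \mid v} \inv_w(\gamma_w), \]
which follows from Shapiro's lemma for the Brauer group of local fields together with the $F_v^s \cong K_w^s$ identifications chosen in Definition \ref{defn:shap_loc}. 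However, a subtlety arises here: the embeddings used on the $F$-side differ from those on the $K$-side by the twisting elements $\tau_w$. The main obstacle will therefore be a careful bookkeeping of embeddings, and this is precisely where Proposition \ref{prop:independence_of_embeddings} enters: it guarantees that we may freely change the set of embeddings $F^s \hookrightarrow F^s_v$ without affecting $\CTP_{\Ind_{K/F}E}$, so one may choose embeddings compatible with the $\tau_w$ of Definition \ref{defn:shap_loc} and conclude.
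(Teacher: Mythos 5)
Your approach matches the paper's almost exactly: explicit cochain-level Shapiro, cup-product compatibility from Appendix~\ref{app:group_change}, choosing the transversal so the $\tau_w$ align, and local invariant bookkeeping. However, there is a genuine gap that runs through the proposal: you systematically omit the projection map $\nu\colon \Ind_{K/F}(F^s)^{\times}\to(F^s)^{\times}$ from Figure~\ref{fig:nat_mor}, and this breaks the typechecking at several points. Concretely, ${}^T\shap^H_G(\epsilon)$ lives in $C^2\big(G_F,\,\Ind_{K/F}(F^s)^{\times}\big)$, not in $C^2(G_F,(F^s)^{\times})$, so it is not a valid entry in a tuple for Definition~\ref{def:CTP_small} applied to $\Ind_{K/F}E$; you must use $\nu\circ\shap(\epsilon)$. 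Relatedly, the displayed cup-product compatibility
\[
\shap(x)\cup_{t_{M_1}}t(\shap(y))=\shap(x\cup_{M_1}y)
\]
cannot be correct as stated: the left side is valued in $(F^s)^{\times}$ while $\shap(x\cup_{M_1}y)$ is valued in $\Ind_{K/F}(F^s)^{\times}$. What Lemma~\ref{lem:gc_appendix}\,(2) actually gives is $\shap(x\cup_{M_1}y)=\shap(x)\cup_{B'}\shap(y)$ with $B'=\Ind_{K/F}B\circ P$, and the pairing $t_{M_1}$ is recovered as $\nu\circ B'$; so the correct statement involves composing with $\nu$. The same issue infects your final step: the identity
\[
\inv_v\big(\shap_{\text{loc},v}(\gamma_w)_{w\mid v}\big)=\sum_{w\mid v}\inv_w(\gamma_w)
\]
does not typecheck because $\shap_{\text{loc},v}$ lands in $H^2\big(G_v,\Ind_{K/F}(F_v^s)^{\times}\big)$, which is not the domain of $\inv_v$. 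What you actually need (and what the paper uses, via Lemma~\ref{lem:inv_cor} and \eqref{eq:cor_from_shloc}) is the commutative triangle $\inv_v\circ\nu=\sum_{w\mid v}\inv_w\circ\text{nrt}_w$ on $H^2\big(G_v,\Ind_{K/F}(F_v^s)^{\times}\big)$.

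A secondary, smaller point: you propose to choose the local cocycles $(\overline{\phi}_{w,M})_w$ on the $K$-side first and then push forward by $\shap_{\text{loc},v}$. The paper goes the other way, first choosing $(\overline{\phi}_{v,\Ind M})_v$ representing a class in $\shap_{\text{loc}}(\msW)$ and then defining $\overline{\phi}_{w,M}=\text{nrt}_w(\overline{\phi}_{v,\Ind M})$. The latter direction is cleaner because $\text{nrt}_w=\nu_1\circ\res_{G_w}\circ\tau_w^{-1}$ is manifestly a cochain-level operation, whereas a cochain-level $\shap_{\text{loc},v}$ needs to be spelled out; this is worth fixing as well. The structure of your argument is right, but without tracking $\nu$ carefully you do not have a legitimate tuple for Definition~\ref{def:CTP_small}, nor the correct local comparison at the end.
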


As with Proposition \ref{prop:duality_functor}, we will prove this in Section \ref{ssec:field_proofs}.

\begin{ex}
\label{ex:res_of_scalars}
Given $K/F$ as above and an abelian variety $A/K$, we may consider the restriction of scalars $R_{K/F}\,A$, which is an abelian variety over $F$ defined as in e.g. \cite{Milne72}. We have a canonical isomorphism of discrete $G_F$-modules
\[\Z[G_F] \otimes_{\Z[G_K]} A(F^s) \,\cong\, R_{K/F}\,A(F^s).\]
Given an integer $n$ coprime to the characteristic of $F$, we have an isomorphism 
\begin{equation}
\label{eq:classical_Ind}
\left(R_{K/F}\,A[n],  \, \msW_n^F\right)  \, \cong \, \Ind_{K/F}\left(A[n], \,\msW_n^K\right)
\end{equation}
in $\SMod_F$, where $\msW_n^F$ and $\msW_n^K$ denote the local conditions defining the $n$-Selmer groups of $R_{K/F}\,A/F$ and $A/K$ respectively.  The Shapiro isomorphism gives an identification
\[\Sel^n (A/K) \,\cong\, \Sel_F \left( \Ind_{K/F} A[n]\right)\,\cong\, \Sel^n (R_{K/F} \,A/F).\]
In this language, Theorem \ref{thm:CT_shap}  says that the Cassels--Tate pairing of $A$ over $K$ is identified with the Cassels--Tate pairing of $R_{K/F}\,A$ over $F$.
\end{ex}
\begin{rmk}
\label{rmk:infinite_F_change}
If we instead define $\SMod_F$ and $\SMod_K$ using the weaker notion of suitable local conditions given in \cite[Definition 4.16]{MS21}, then Definition \ref{defn:Ind}  still defines a functor 
\[\Ind_{K/F}\colon \SMod_K \to \SMod_F,\] 
and Proposition \ref{prop:duality_functor} and Theorem \ref{thm:fields_main} still apply.

In addition, we may use the explicit constructions of Section \ref{sec:explicit_cochain_formulae} to extend $\Ind_{K/F}$ to a functor from $\SMod_{K, \ell}^{\infty}$ to $\SMod_{F, \ell}^{\infty}$ for each $\ell$ not equal to the characteristic of $F$, and to extend the natural isomorphism $\shap_{K/F}$ in this context. Here $\SMod_{K, \ell}^{\infty}$ and $\SMod_{F, \ell}^{\infty}$   are the categories of potentially-infinite Galois $\Z_{\ell}$-modules decorated with local conditions introduced in \cite[Section 4.2]{MS21}. We still have Proposition \ref{prop:duality_functor} and Theorem \ref{thm:fields_main} in this context, as can be proved from \cite[Lemma 4.9]{MS21} and the definition of the Cassels--Tate pairing in these categories.
\end{rmk}

Before turning to the proofs of Proposition \ref{prop:duality_functor} and Theorem \ref{thm:fields_main}, we give some basic consequences of these results.

\subsection{The effect of corestriction and restriction}
By \eqref{eq:cor_from_sh}, restriction and corestriction can  be phrased in terms of the Shapiro isomorphism, so Theorem \ref{thm:fields_main} can be used to   understand their effect on   the Cassels--Tate pairing. We begin with a definition. Here and below, we refer to Figure \ref{fig:nat_mor} for the definition of the maps $i$, $\nu$, $i_1$ and $\nu_1$. 

\begin{defn}
For each $w$ dividing $v$, we define natural homomorphisms
\begin{align*}
&\cores_{w|v} \,\colon H^k(G_w, M) \to H^k(G_v, M)\quad\text{and}\\
&\res_{w|v}\, \colon H^k(G_v, M) \to H^k(G_w, M)
\end{align*}
by setting
\[\cores_{w|v} \,=\, \cores^{\tau_w G_w \tau_w^{-1}}_{G_v} \,\circ\, \tau_w \quad\text{ and }\quad \res_{w|v} \,=\, \tau_w^{-1} \,\circ\,\res^{G_v}_{\tau_w G_w \tau_w^{-1}}.\]
By \eqref{eq:shHGU_def}, the $w$ component of $\shap_{\text{loc},\, v}$ is  the composition of the homomorphisms
\[H^k(G_w, M) \xrightarrow{\,\,i_1\,\,}  H^k\left(G_w, \,\Ind_{K/F} M\right) \xrightarrow{\,\, \cores_{w|v}\,\,} H^k\left(G_v, \,\Ind_{K/F} M\right),\]
and the $w$ component of the inverse is the composition
\[H^k\left(G_v, \,\Ind_{K/F} M\right) \xrightarrow{\,\, \res_{w|v}\,\,}  H^k\left(G_w, \,\Ind_{K/F} M\right) \xrightarrow{\,\,\nu_1\,\,} H^k(G_w, M).\]
If $M$ is a $G_F$-module we also have a commutative diagram
\begin{equation}
\label{eq:cor_from_shloc}
\begin{tikzcd}[column sep = large]
\bigoplus_{w|v} H^k(G_w, M) \arrow[dr, "\shap_{\text{loc}, v}"] \arrow[r,  "\sum_{w|v} \cores_{w|v}"] &H^k(G_v, M) \\
 H^k(G_v,  M) \arrow[u,"\bigoplus_{w|v} \res_{w|v}"] \arrow[r, "i"]  & H^k(G_v, \Ind_G^H M) \arrow[u,"\nu"],
\end{tikzcd}
\end{equation}
as can be seen by verifying the result for $k = 0$.
\end{defn}

In what follows we abbreviate $\cores_{G_{F}}^{G_{K}}$ as $\cores$ and $\res_{G_{K}}^{G_{F}}$ as $\res$. 

\begin{cor}
\label{cor:cor}
 Given exact sequences
\begin{alignat*}{3}
& E_F\,=\,&&\left[0 \rightarrow (M_1, \msW_{1F}) \xrightarrow{\,\,\iota\,\,} (M, \msW_F) \xrightarrow{\,\,\pi\,\,} (M_2, \msW_{2F}) \rightarrow 0\right]\quad&&\text{in } \SMod_{F} \quad\text{and}\\
 &E_K\,=\,&&\left[0 \rightarrow (M_1, \msW_{1K}) \xrightarrow{\,\,\iota\,\,} (M, \msW_K) \xrightarrow{\,\,\pi\,\,} (M_2, \msW_{2K}) \rightarrow 0\right]\quad&&\text{in } \SMod_{K},
\end{alignat*}
associated to the same exact sequence of $G_{F}$-modules, we have the following:
\begin{enumerate}
\item Suppose 
\[  \sum_{v\text{ of }F}\sum_{w\mid v}\textup{cor}_{w\mid v}(\phi_w)\in  \msW_F\quad\text{for all}\quad (\phi_w)_w \in \msW_K.\]
Then, for any $\phi \in \Sel_K\, M_2$ and $\psi \in \Sel_F\, M_1^{\vee}$, we have
\[\CTP_{E_F}\left(\cores \,\phi, \,\psi\right)\, =\, \CTP_{E_K}\left(\phi, \,\res\, \psi\right).\]
\item Suppose 
\[\sum_{v\text{ of } F}\sum_{w\mid v}\textup{res}_{w\mid v}(\phi_v)\in \msW_K \quad\text{for all}\quad (\phi_v)_v \in \msW_F.\]
Then, for any $\phi \in \Sel_F\, M_2$ and $\psi \in \Sel_K\, M_1^{\vee}$, we have
\[\CTP_{E_F}\left(\phi,\, \cores\, \psi\right)\, =\, \CTP_{E_K}\left(\res \,\phi, \,\psi\right).\]
\item Suppose the conditions of the previous two parts both hold. Then, for any $\phi$ in  $\Sel_F\, M_2$ and $\psi$ in $\Sel_F\, M_1^{\vee}$, we have
\[ \CTP_{E_K}\left(\res\,\phi,\, \res\,\psi\right) = [K : F] \cdot \CTP_{E_F}(\phi, \,\psi). \]
\end{enumerate}
\end{cor}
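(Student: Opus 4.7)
The plan is to deduce all three parts from Theorem \ref{thm:fields_main} together with the naturality of the Cassels--Tate pairing, by realizing $\cores$ and $\res$ as compositions of the global Shapiro isomorphism with the natural transformations $\nu\colon \Ind_{K/F} M \to M$ and $i\colon M \to \Ind_{K/F} M$ coming from the $G_F$-module structure on $M$.

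For part (1), the hypothesis is precisely the assertion that $\nu_M$ descends to a morphism $\Ind_{K/F}(M,\msW_K) \to (M,\msW_F)$ in $\SMod_F$: indeed, by the commutative diagram \eqref{eq:cor_from_shloc}, the composition $\nu \circ \shap_{\textup{loc}}$ at a place $v$ equals $\sum_{w\mid v}\cores_{w\mid v}$. Exactness of $E_K$ and $E_F$ propagates this condition to $M_1$ and $M_2$, yielding a morphism of short exact sequences $\Ind_{K/F} E_K \to E_F$ in $\SMod_F$. Naturality of the Cassels--Tate pairing then gives, for $\phi \in \Sel_K M_2$ and $\psi \in \Sel_F M_1^\vee$,
\[
\CTP_{\Ind_{K/F} E_K}\bigl(\shap(\phi),\, \nu^\vee(\psi)\bigr) \,=\, \CTP_{E_F}\bigl(\nu(\shap(\phi)),\, \psi\bigr).
\]
On the right, $\nu \circ \shap = \cores$ is the standard realization of corestriction. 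Applying Theorem \ref{thm:fields_main} to the left rewrites it as $\CTP_{E_K}(\phi,\, \shap^{-1}\circ t^{-1}\circ \nu^\vee(\psi))$, so part (1) reduces to the algebraic identity $\shap^{-1} \circ t^{-1} \circ \nu^\vee = \res$. Unwinding Definition \ref{def:t_iso} one checks that $t^{-1}\circ \nu^\vee$ equals the natural embedding $M^\vee \hookrightarrow \Ind_{K/F} M^\vee$ sending $\phi \mapsto \sum_{\sigma \in G_F/G_K} [\sigma]\otimes \sigma^{-1}\phi$; combined with the standard identity $\shap \circ \res = i_\ast$ from Appendix \ref{app:group_change}, this gives what we want.

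Part (2) is dual to part (1): its hypothesis says that $i_M$ defines a morphism $(M,\msW_F) \to \Ind_{K/F}(M,\msW_K)$ in $\SMod_F$, giving a morphism of sequences $E_F \to \Ind_{K/F} E_K$, and the same two-step argument produces the identity. Alternatively, and perhaps more efficiently, one may apply part (1) to the dual sequences $E_F^\vee$ and $E_K^\vee$: since $\cores_{w\mid v}$ and $\res_{w\mid v}$ are adjoint under Tate local duality, the hypothesis of part (2) for $E$ matches the hypothesis of part (1) for $E^\vee$, and invoking the duality identity \eqref{eq:intro_symm} together with Proposition \ref{prop:duality_functor} (to identify $\Ind_{K/F}(E^\vee)$ with $(\Ind_{K/F}E)^\vee$) transports the conclusion. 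For part (3), when both hypotheses hold, we apply part (1) with $\res\,\phi \in \Sel_K M_2$ in place of $\phi$, obtaining
\[
\CTP_{E_K}(\res\,\phi,\, \res\,\psi) \,=\, \CTP_{E_F}(\cores \circ \res\,\phi,\, \psi) \,=\, [K:F]\cdot \CTP_{E_F}(\phi,\,\psi),
\]
where the last equality uses the standard formula $\cores \circ \res = [K:F]$ on $H^1(G_F, M_2)$.

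The main obstacle is the algebraic identity $\shap^{-1} \circ t^{-1} \circ \nu^\vee = \res$: while conceptually transparent, verifying it requires careful bookkeeping with coset representatives of $G_F/G_K$ and with the $G_F$-action on $\Ind_{K/F} M^\vee$, and it is the one place where the precise form of $t_M$ in Definition \ref{def:t_iso} is needed. Once this compatibility is in hand, the rest of the argument is a formal consequence of Theorem \ref{thm:fields_main} and the naturality of the pairing.
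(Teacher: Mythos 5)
Your proposal is correct and follows essentially the same route as the paper: realize the hypothesis of part (1) as saying that $\nu$ gives a morphism $\Ind_{K/F}E_K \to E_F$ in $\SMod_F$, invoke naturality of the pairing together with Theorem \ref{thm:fields_main}, and close the circle with the identities $\cores = \nu\circ\shap$, $\res = \shap^{-1}\circ i$, and $t^{-1}\circ\nu^\vee = i$, all of which the paper also uses. Your alternative derivation of part (2) from part (1) applied to the dual sequences, via the duality identity and Proposition \ref{prop:duality_functor}, is a small but genuine variation; the paper simply says the second part is proved similarly by symmetry, while your duality route makes explicit that the two hypotheses are exchanged under $\vee$, which is a mild gain in conceptual economy at no extra cost.
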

\begin{proof}
From \eqref{eq:cor_from_sh}, we have
\[\cores = \nu\circ \shap \quad\text{and}\quad \res = \shap^{-1} \circ i.\]
For the first part, we start with the commutative diagram of $G_F$-modules
\begin{equation}
\label{eq:cor_nu_exact}
\begin{tikzcd}
 0 \arrow{r} & \Ind_{K/F} M_1 \arrow{d}{\nu} \arrow{r}{\textup{Ind}(\iota)}& \Ind_{K/F} M  \arrow{d}{ \nu}    \arrow{r}{\textup{Ind}(\pi)}&\Ind_{K/F} M_2 \arrow{d}{ \nu}    \arrow{r} & 0\\
0 \arrow{r} & M_1 \arrow{r}{\iota} & M \arrow{r}{\pi} & M_2 \arrow{r} & 0
\end{tikzcd}
\end{equation}
with exact rows. From \eqref{eq:cor_from_shloc} and our assumptions, this induces a commutative diagram in  $\SMod_{F}$ with top row $\Ind_{K/F} E_K$ and   bottom row  $E_F$. We also note that the diagram
\[\begin{tikzcd}
M_1^{\vee} \arrow[r, "\nu^{\vee}"] \arrow[rr, bend right = 14, "i" '] &(\Ind_{K/F} M_1)^{\vee}\arrow[r, "t^{-1}"] & \Ind_{K/F} M_1^{\vee}
\end{tikzcd}\]
is commutative. The part then follows from Theorem \ref{thm:CT_shap}, \eqref{eq:cor_from_sh}, and naturality applied to \eqref{eq:cor_nu_exact}. The proof for the second part is similar.

Under the conditions of the third part, we find that $\res\, \phi$ is in $\Sel_{K} \,M_2$. The conditions of the first part also hold, so
\[\CTP_{E_K}\left(\res\,\phi,\, \res\,\psi\right) \,=\, \CTP_{E_F}\left(\cores\, \circ\res\,\phi,\,\psi\right) \,=\,   \CTP_{E_F}\left([K: F] \cdot \phi, \,\psi\right), \]
giving the claim.
\end{proof}
\begin{rmk}
\label{rmk:Yu}
The first two parts of this corollary were proved by Fisher for the Cassels--Tate pairing of an elliptic curve over a number field \cite{Fish03}. For more general abelian varieties, a sketched proof for these identities appears in \cite[Theorem 8]{Yu04}. We mention that the latter work relies on a cochain identity that does not hold for the standard resolutions used to define the Cassels--Tate pairing  \cite[(6)]{Yu04}. The third part of the corollary extends a result for abelian varieties due to   \v{C}esnavi\v{c}ius \cite[Lemma 3.4]{MR3860395}.

If $[K \colon F]$ does not divide the order of $M$, Corollary \ref{cor:cor} (3)  shows that the Cassels--Tate pairing for $E$ over $F$ can be recovered from the Cassels--Tate pairing over $K$. In particular, if $M$ has order a power of $\ell$, we can compute the Cassels--Tate pairing for $E$ by computing it over any field $K$ for which the image of $G_{K}$ in $\textup{Aut}(M)$  is an $\ell$-group. 
\end{rmk}

\subsection{The effect of conjugation}
Given $K/F$ as above, and given $\tau$ in $G_F$, we can define a functor
\[\tau \colon \SMod_K  \to \SMod_{\,\tau K}\]
by taking
\[\tau(M, \msW) = (\tau M, \,\tau \msW).\]
 Conjugation by $\tau$ then defines a natural isomorphism
\[\tau \colon \Sel_K \circ \tau \natiso \Sel_{\tau K}.\]

Given  $M$ in $\SMod_K$, the map $\rho_{\tau}$ defined in Figure \ref{fig:nat_mor} gives a natural isomorphism
\[\rho_{\tau} \colon \Ind_{K/F} M \isoarrow \Ind_{\tau K/F}\, \tau M\]
in the category $\SMod_F$. By \eqref{eq:rho_tau_square}, we have a commutative square
\begin{equation}
\label{eq:rho_tau_used}
\begin{tikzcd}
\Sel_K(M) \arrow{r}{\tau} \arrow{d}{\shap_{K/F}} & \Sel_{\tau K} (\tau M )\arrow{d}{\shap_{\tau K/F}} \\
\Sel_F\left( \Ind_{K/F} M \right) \arrow{r}{\rho_{\tau}} & \Sel_F\left( \Ind_{\tau K/F} \,\tau M \right).
\end{tikzcd}
\end{equation}

Naturality of the Cassels--Tate pairing and Theorem \ref{thm:fields_main}  gives the following, which can also be proven directly from the definition of the Cassels--Tate pairing.   
\begin{prop}
\label{prop:conjugation}
Given a short exact sequence $E$ in $\SMod_K$, and given $\tau$ in $G_F$, we have
\[ \CTP_{\tau E}(\tau \phi, \, \tau \psi) \,=\,\CTP_{E}(\phi, \,\psi).\]
\end{prop}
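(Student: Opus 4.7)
The plan is to reduce the identity to an instance of the naturality property of the Cassels--Tate pairing on $\SMod_F$, by inducing both sequences up to the base field $F$. Two applications of Theorem \ref{thm:fields_main} rephrase the claim as
\[\CTP_{\Ind_{K/F} E}\bigl(\shap(\phi),\, t(\shap(\psi))\bigr) \,=\, \CTP_{\Ind_{\tau K/F}\, \tau E}\bigl(\shap(\tau\phi),\, t(\shap(\tau\psi))\bigr),\]
so both pairings live in the single category $\SMod_F$, which is the correct setting in which to apply naturality.

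Next I would use the natural isomorphism $\rho_\tau \colon \Ind_{K/F} M \isoarrow \Ind_{\tau K/F}\, \tau M$ in $\SMod_F$ to assemble an isomorphism of short exact sequences $\Ind_{K/F} E \to \Ind_{\tau K/F}\, \tau E$. Naturality of the Cassels--Tate pairing, recalled in Section \ref{ssec:recap}, then yields
\[\CTP_{\Ind_{K/F} E}\bigl(\alpha,\, \rho_\tau^{\vee}(\beta)\bigr) \,=\, \CTP_{\Ind_{\tau K/F}\, \tau E}\bigl(\rho_\tau(\alpha),\, \beta\bigr)\]
for $\alpha \in \Sel_F\, \Ind_{K/F} M_2$ and $\beta \in \Sel_F\, (\Ind_{\tau K/F}\,\tau M_1)^\vee$. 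Specializing to $\alpha = \shap(\phi)$ and $\beta = t(\shap(\tau\psi))$, the commutative square \eqref{eq:rho_tau_used} gives $\rho_\tau(\shap(\phi)) = \shap(\tau\phi)$ on the right-hand side. To finish, it then suffices to establish the dual-side compatibility $\rho_\tau^{\vee} \circ t \circ \rho_\tau = t$ as maps from $\Ind_{K/F} M_1^\vee$ to $(\Ind_{K/F} M_1)^\vee$, which combined with the analogue of \eqref{eq:rho_tau_used} for $\psi$ gives $\rho_\tau^{\vee}(t(\shap(\tau\psi))) = t(\shap(\psi))$.

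I expect the main (and really only) calculational step to be this compatibility of $\rho_\tau$ with the duality isomorphism $t$ of Definition \ref{def:t_iso}. Since $\rho_\tau$ acts on generators by $[\sigma]\otimes m \mapsto [\sigma\tau^{-1}]\otimes \tau m$, checking $\rho_\tau^\vee \circ t \circ \rho_\tau = t$ amounts to verifying that the condition $\sigma_1 G_K = \sigma_2 G_K$ appearing in the formula for $t_M$ transforms correctly into $\sigma_1\tau^{-1}\cdot\tau G_K\tau^{-1} = \sigma_2\tau^{-1}\cdot\tau G_K\tau^{-1}$ for $t_{\tau M}$, and that the evaluation expression $\sigma_1\langle m_1,\sigma_1^{-1}\sigma_2 m_2\rangle$ is preserved after passing to $\tau m_1, \tau m_2$ using $G_F$-equivariance of the evaluation pairing $M_1\otimes M_1^\vee \to (F^s)^\times$. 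This is routine bookkeeping rather than a genuine obstacle.

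As an independent sanity check, one can verify the proposition directly from Definition \ref{def:CTP_small}: beginning with a tuple $(\overline{\phi}, \overline{\psi}, f, (\overline{\phi}_{v,M})_v, \epsilon)$ computing $\CTP_E(\phi,\psi)$, applying $\tau$ cochain-wise produces a valid computing tuple for $\CTP_{\tau E}(\tau\phi,\tau\psi)$, and the sum over places is unchanged because $\tau$ simply permutes the places of $K$ and preserves local invariants. The only subtlety is that conjugation by $\tau$ alters the choice of embeddings $F^s\hookrightarrow F^s_v$ used to define restrictions, which is exactly the ambiguity absorbed by Proposition \ref{prop:independence_of_embeddings}.
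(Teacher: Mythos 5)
Your proposal is correct and follows exactly the route the paper indicates: the paper derives this proposition in one line by citing naturality of the pairing together with Theorem \ref{thm:fields_main}, and also remarks that a direct cochain-level check is possible. You fill in the details of the first route, including the compatibility $\rho_\tau^\vee \circ t \circ \rho_\tau = t$ (which the paper leaves implicit but which does hold by the computation you sketch, once $\tau(M_1^\vee)$ is identified with $(\tau M_1)^\vee$ in the standard way), and your closing paragraph is precisely the paper's alternative direct argument with the embedding subtlety correctly identified as being absorbed by Proposition \ref{prop:independence_of_embeddings}.
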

 
We consider one special case of this proposition. Suppose that $K/F$ is Galois, and suppose we have an exact sequence of $G_F$-modules
\[0 \rightarrow M_1 \xrightarrow{\,\,\iota\,\,} M \xrightarrow{\,\,\pi\,\,} M_2 \rightarrow 0.\]
Suppose also that we augment this to an exact sequence  
\begin{align*}
& E\,=\,&&\left[0 \rightarrow \left(\text{Res}_{G_K} \,M_1, \,\msW_1\right) \xrightarrow{\,\,\iota\,\,} \left(\text{Res}_{G_K} \,M, \,\msW\right) \xrightarrow{\,\,\pi\,\,} \left(\text{Res}_{G_K} \,M_2,\, \msW_2\right) \rightarrow 0\right]
\end{align*}
in $\SMod_K$ so that 
\begin{equation}
\tau(\mathscr{W})=\mathscr{W}~~\quad \textup{for all} \quad\tau \in \Gal(K/F).
\end{equation}
Under this assumption, the Selmer group for $M$, $M_1$, $M_2$, and their duals are all $\Gal(K/F)$-modules under the conjugation action. The above proposition gives
\begin{equation}
\label{eq:conjugation_special}
\CTP_{E}(\tau \phi, \, \tau \psi) \,=\,\CTP_{E}(\phi, \,\psi)
\end{equation}
for all $\tau$ in $\Gal(K/F)$, $\phi$ in $\Sel_{K} M_2$, and $\psi$ in $\Sel_{K} M_1^{\vee}$.

\subsection{Proofs of Proposition \ref{prop:duality_functor} and Theorem \ref{thm:fields_main}}
\label{ssec:field_proofs}
In Section \ref{sec:basic_operations}, we defined corestriction using injective resolutions. This contrasts with our definition of the Cassels--Tate pairing, which is defined explicitly by manipulating inhomogeneous cochains.

To prove Theorem \ref{thm:fields_main}, we need these definitions to be more compatible. For the work below, we do this by  redefining corestriction in terms of inhomogeneous cochains. A potentially more elegant approach would be to redefine the Cassels--Tate pairing using injective resolutions. We hope to return to this alternative in future work.

In the proofs of Proposition \ref{prop:duality_functor} and Theorem \ref{thm:CT_shap} we  will use the following class field theoretic lemma.

\begin{lem}
\label{lem:inv_cor}
Given $v$ a place of $F$ and $w$ a place of $K$ dividing $v$, we have a commutative diagram
\begin{equation}
\label{eq:inv_cor}
\begin{tikzcd}
H^2(G_w, \,\Fvstar) \arrow[rr, "{ \cores_{w|v}}"] \arrow[dr, "\inv_{w}" '] && H^2(G_v, \,\Fvstar)\arrow[dl, "\inv_{v}"]\\
& \QQ/\Z &.
\end{tikzcd}
\end{equation}
\end{lem}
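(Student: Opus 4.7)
The plan is to unwind the definition of $\cores_{w\mid v}$ and reduce the claim to two standard facts from local class field theory. Recall that by definition
\[
\cores_{w\mid v} \,=\, \cores^{\tau_w G_w\tau_w^{-1}}_{G_v}\circ\tau_w,
\]
so the diagram in the lemma factors as
\[
H^2(G_w,\,F_v^{s\times}) \,\xrightarrow{\,\tau_w\,}\, H^2(\tau_w G_w\tau_w^{-1},\,F_v^{s\times}) \,\xrightarrow{\,\cores\,}\, H^2(G_v,\,F_v^{s\times}),
\]
where on the left $F_v^{s\times}$ is a $G_w$-module through the chosen isomorphism $F_v^s\simeq K_w^s$ (so $H^2(G_w,F_v^{s\times})\simeq H^2(G_w,K_w^{s\times})\simeq \mathrm{Br}(K_w)$), and in the middle it is regarded as a module over the open finite-index subgroup $\tau_w G_w\tau_w^{-1}\leq G_v$.

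The first step is to verify that conjugation by $\tau_w$ intertwines the invariant $\inv_w$ with the invariant $\inv'$ attached to $H^2(\tau_w G_w\tau_w^{-1},F_v^{s\times})$, i.e.\ the invariant of the local field $L:=(F_v^s)^{\tau_w G_w\tau_w^{-1}}$.  The relation $i_v = i_w\circ\tau_w$ identifies $L$ with $K_w$ as complete subfields of $F_v^s\simeq K_w^s$, and under this identification $\tau_w$ is just the canonical relabeling; concretely, $\tau_w$ takes a cocycle $f\in Z^2(G_w,K_w^{s\times})$ to the cocycle $(\sigma_1,\sigma_2)\mapsto \tau_w\bigl(f(\tau_w^{-1}\sigma_1\tau_w,\tau_w^{-1}\sigma_2\tau_w)\bigr)$, which is the standard naturality isomorphism for the Brauer group under the isomorphism $K_w\simeq L$ of local fields.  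Since local invariants are preserved by any isomorphism of local fields, $\inv_w = \inv'\circ\tau_w$.

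The second step is the classical local class field theory statement: for any open subgroup $H\leq G_v$ of finite index, with associated finite extension $L/F_v$, one has
\[
\inv_v\circ\cores^{H}_{G_v} \,=\, \inv_L
\]
on $H^2(H,F_v^{s\times})$.  (This is Theorem~1.5.7 of Neukirch--Schmidt--Wingberg, or e.g.\ Proposition~1.8 of Chapter~VII of Milne's \emph{Class Field Theory}.)  Applying this to $H=\tau_w G_w\tau_w^{-1}$ gives $\inv_v\circ\cores^{\tau_w G_w\tau_w^{-1}}_{G_v}=\inv'$.

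Composing the two steps,
\[
\inv_v\circ\cores_{w\mid v} \,=\, \inv_v\circ\cores^{\tau_w G_w\tau_w^{-1}}_{G_v}\circ\tau_w \,=\, \inv'\circ\tau_w \,=\, \inv_w,
\]
which is exactly the commutativity of \eqref{eq:inv_cor}.  The main (and essentially only) obstacle is the bookkeeping in the first step: one must carefully track the two different ways that $F_v^{s\times}$ is being viewed as a Galois module under the relation $i_v=i_w\circ\tau_w$, and confirm that the resulting identification of Brauer groups is the one induced by an isomorphism of local fields, so that invariants match.  Once this is in hand, the lemma follows immediately from the classical corestriction-invariant formula.
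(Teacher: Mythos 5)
Your overall strategy coincides with the paper's: both reduce the claim to the standard compatibility of corestriction with the local invariant map. You spell out the conjugation step explicitly, identifying $L = (F_v^s)^{\tau_w G_w\tau_w^{-1}}$ with $K_w$ as local fields and invoking invariance of $\inv$ under isomorphisms of local fields, whereas the paper absorbs this bookkeeping into a single citation of \cite[(7.1.4)]{Neuk08} for the nonarchimedean case. Your extra detail is correct and genuinely clarifying for nonarchimedean $v$. The one omission: the references you invoke (NSW Theorem~1.5.7 in the abstract class formation setting, or the corresponding statement in Milne's notes) are formulated for nonarchimedean local fields, and the archimedean places fall outside that framework (for instance $H^2(G_{\mathbb{R}},\mathbb{C}^\times)\cong\tfrac{1}{2}\mathbb{Z}/\mathbb{Z}$, not $\mathbb{Q}/\mathbb{Z}$, so $(\mathbb{C}^\times,G_{\mathbb{R}})$ is not a class formation in the usual sense). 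The paper treats the archimedean places by a separate direct check of the two cases $K_w=\mathbb{R}$ and $K_w=\mathbb{C}$; there the Brauer groups are $\tfrac{1}{2}\mathbb{Z}/\mathbb{Z}$ or $0$, and the identity $\inv_v\circ\cores_{w|v}=\inv_w$ is immediate. Adding that one-line archimedean check closes your argument.
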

\begin{proof}
For $v$ nonarchimedean  this is \cite[(7.1.4)]{Neuk08}. If $v$ is archimedean, it can be verified by separate consideration of the two cases $K_w = \mathbb{C}$ and $K_w = \mathbb{R}$.
\end{proof}

\begin{proof}[Proof of Proposition \ref{prop:duality_functor}]
Given a place $v$ of $F$, and given $\phi_w$ in $H^1(G_w, M)$ and $\psi_w$ in $H^1(G_w, M^{\vee})$ for each place $w$ of $K$ dividing $v$, it suffices to show that
\begin{equation}
    \label{eq:du_func_goal}
\inv_v\left(\shap_{\text{loc}, \, v}\big((\phi_w)_w\big) \,\cup_{t_M}\, \shap_{\text{loc}, \, v}\big((\psi_w)_w\big)  \right)\, =\, \sum_{w | v} \inv_w(\phi_w \cup \psi_w).
\end{equation}
For each $w$, we define $\tau_w$ in $G_F$ as in Definition \ref{defn:shap_loc}. We refer to Figure \ref{fig:nat_mor} for the definition of the maps $\nu$ and $\nu_1$. The pairing $t_{M}$ of \eqref{eq:Pt} may be expressed as a composition
\[\Ind_{K/F} M \times \Ind_{K/F} M^{\vee}  \xrightarrow{\,\,P\,\,}\Ind_{K/F}(M \otimes M^{\vee}) \xrightarrow{\,\,\Ind_{K/F}B\,\,}  \Ind_{K/F} (\Fsep)^{\times} \xrightarrow{\,\,\nu\,\,} (\Fsep)^{\times},\]
where $B$ is the evaluation pairing on $M \otimes M^{\vee}$ and where $P$ is the pairing defined in Lemma \ref{lem:gc_appendix} (2). Taking $B'$ to be the pairing $\Ind_{K/F}B \circ P$, we have the identity 
 \begin{equation}
     \label{eq:coch_cup_nu}
 \nu_1 \circ \res_{G_w} \circ \tau_w^{-1}\left(a \cup_{B'} b\right) \,=\, \nu_1 \circ \res_{G_w} \circ \tau_w^{-1}(a) \,\cup_B\, \nu_1 \circ \res_{G_w} \circ \tau_w^{-1}( b),
\end{equation}
for any $a$ in $C^1(G_v, \Ind_{K/F} M)$ and $b$ in $C^1(G_v, \Ind_{K/F} M^{\vee})$, and for any $w$ dividing $v$. This is readily verified  at the level of cochains. 

To ease notation, we take
\begin{equation}
\label{eq:nrtw}
\text{nrt}_w = \nu_1 \circ \res_{G_w} \circ \tau_w^{-1}.
\end{equation}
Note that this is the $w$-component of $\shap_{\text{loc},\,v}^{-1}$.

From \eqref{lem:inv_cor} and \eqref{eq:cor_from_shloc}, we have a commutative triangle
\begin{equation}
\label{eq:inv_cor_2}
\begin{tikzcd}
H^2\left(G_v, \,\Ind_{K/F} \,(\Fsep_v)^{\times}\right) \arrow[rr, "\oplus_{w |v}\,\text{nrt}_w"] \arrow[dr, "\inv_v\,\circ\, \nu" '] && \bigoplus_{w |v}H^2(G_w, \,(\Fsep_v)^{\times})\arrow[dl, "\sum_{w | v} \inv_{w}"]\\
& \QQ/\Z. &
\end{tikzcd}
\end{equation}
The left hand side of \eqref{eq:du_func_goal} thus equals
\begin{alignat*}{2}
&\sum_{w | v}\inv_w \circ\text{nrt}_w \left(\shap_{\text{loc}, \, v}\big((\phi_w)_w\big) \,\cup_{B'}\,  \shap_{\text{loc}, \, v}\big((\psi_w)_w\big) \right) && \\
&\quad = \sum_{w | v}\inv_w\left(\text{nrt}_w\circ \shap_{\text{loc}, \, v}\big((\phi_w)_w\big) \,\cup \, \text{nrt}_w\circ \shap_{\text{loc}, \, v}\big((\psi_w)_w\big) \right))\quad &&\text{by \eqref{eq:coch_cup_nu}}\\
&\quad =  \sum_{w | v} \inv_w(\phi_w \cup \psi_w). &&
\end{alignat*}
This establishes \eqref{eq:du_func_goal}, giving the proposition.
\end{proof}

\begin{proof}[Proof of Theorem \ref{thm:CT_shap}]
Take $G = G_F$ and $H = G_K$, and choose a transversal $T$   as in Definition \ref{defn:explicit_shap}. For each place $w$ of $K$,  define $\tau_w$ as in Definition \ref{defn:shap_loc}. Applying Proposition \ref{prop:independence_of_embeddings} as necessary,  we may  assume  the $\tau_w$ all lie in the image of  $T$.

Choose cocycles $\overline{\phi}$ and $\overline{\psi}$ representing $\phi$ and $\psi$, and choose $f$ in $C^1(H, M)$ projecting to $\overline{\phi}$. Choose $\epsilon$ in $C^2(G, (F^s)^\times)$ satisfying $d\epsilon = \iota^{-1}(df) \cup_{M_1} \overline{\psi}$.

We take
\[f' =  {}^T\shap^H_G (f),\quad \epsilon' =   {}^T\shap^H_G (\epsilon),\quad \overline{\phi}' =  {}^T\shap^H_G (\overline{\phi}),\quad \overline{\psi}' =  {}^T\shap^H_G (\overline{\psi}).\]
Lemma \ref{lem:gc_appendix} then implies $\epsilon' \in C^2\left(G_F,\, \Ind_{K/F} (\Fsep)^{\times}\right)$ satisfies
\[d\epsilon' = (\Ind_{K/F}\,\iota)^{-1}(df')  \cup_{B'} \overline{\psi'},\]
where the pairing $B'$ is defined as in the proof of Proposition \ref{prop:duality_functor} above.

Choose a tuple of cocycles $(\overline{\phi}_{v,\, \text{Ind } M})_v$ representing a class in $\shap_{\text{loc}}(\msW)$ so 
$\overline{\phi}_{v,\, \text{Ind } M}$ projects to the cocycle $\overline{\phi}'_v$ for each $v$ of $F$. Defining $\text{nrt}_w$ as in \eqref{eq:nrtw}, we then take
\[\overline{\phi}_{w, M} = \text{nrt}_w(\overline{\phi}_{v,\, \text{Ind } M}).\]
By part three of Lemma \ref{lem:gc_appendix}  we have 
$\pi\left(\overline{\phi}_{w, M}\right) = \overline{\phi}_w$
for each $v$. In the notation of Definition \ref{def:CTP_small}, we may compute $\CTP_E(\phi, \psi)$ from the tuple
\[(\overline{\psi},\, \overline{\phi},\, f, \,(\overline{\phi}_{w, M})_v, \,\epsilon),\]
and we may compute $\CTP_{\Ind_{K/F}\, E}(\shap_{K/F} \,\phi, \,\shap_{K/F}\, \psi)$ from the tuple
\[\left(\overline{\psi}',\, \overline{\phi}',\, f',\, \left(\overline{\phi}_{v,\, \text{Ind } M}\right)_v,\, \nu \circ \epsilon'\right).\]
Here $\nu$ is  defined as in  Figure \ref{fig:nat_mor}.  For each place $v$ of $F$ we then have  
\begin{alignat*}{2}
    &\inv_v\left(\left(f'_v - \overline{\phi}_{v,\, \text{Ind } M}\right) \cup_{t} \overline{\psi}_v  \,-\, \nu \circ \epsilon'_v\right) &&\\[2pt]
    &\quad= \inv_v\circ\nu\left(\left(f'_v - \overline{\phi}_{v,\, \text{Ind } M}\right) \cup_{B'} \overline{\psi}_v  \,-\, \epsilon'_v\right)&&\\[2pt]
   &\quad=\sum_{w|v} \inv_w \circ \text{nrt}_w\left(\left(f'_v - \overline{\phi}_{v,\, \text{Ind } M}\right) \cup_{B'} \overline{\psi}_v  \,-\, \epsilon'_v\right)\,\,\,\, &&\text{by \eqref{eq:inv_cor_2}}\\
&\quad = \sum_{w|v} \inv_w\left( \text{nrt}_w\left(f'_v - \overline{\phi}_{v,\, \text{Ind } M}\right) \,\cup_{M_1}\, \text{nrt}_w(\overline{\psi}'_v) \,-\, \text{nrt}_w(\epsilon'_v)\right)\,\,\,\,&&\text{by  \eqref{eq:coch_cup_nu}}  \\
 &\quad = \sum_{w|v} \inv_w\left( \left(f_w -  \overline{\phi}_{w,\, M}\right) \,\cup_{M_1}\, \overline{\psi}_w\, -\, \epsilon_w\right)&&\text{by Lemma \ref{lem:gc_appendix}},
\end{alignat*}
where we have suppressed the appearances of $(\Ind_{K/F}\, \iota)^{-1}$ and $\iota^{-1}$. Summing over all places $v$ gives the result.
\end{proof}

\section{Class groups and roots of unity}
\label{sec:examples} \label{sec:class_groups}
We now shift our focus to applications of our theory of the Cassels--Tate pairing. Our first goal is to prove Propositions \ref{prop:cg_sym_intro} and \ref{prop:cg_alt_intro} on the class groups of global fields with many roots of unity. To start, we follow Flach \cite{Flach90} to translate the reciprocity pairing into a Cassels--Tate pairing.

\subsection{The reciprocity pairing as a Cassels--Tate pairing}
\label{ssec:fund_class_groups}
\begin{notat}
Take $F$ to be a global field, and write $\mcO_F$ for the ring of integers of $F$. For each positive integer $m$ indivisible by the characteristic of $F$, take
\[\msW_m^{\text{ur}} = \prod_v H^1_{\text{ur}}\left(G_v, \tfrac{1}{m}\Z/\Z\right),\]
with the product being over all places of $F$.  We  have
\[\textup{Sel}\left( \tfrac{1}{m}\Z/\Z,\msW_m^{\text{ur}}\right) \,=\, \Hom\left(\Gal(H_F/F), \tfrac{1}{m}\Z/\Z\right) \,\stackrel{\textup{def}}{=}\, \Cl^*F[m],\]
where $H_F$ is the Hilbert class field of $F$. 

To ease notation in what follows, set
\[\quad\quad \quad \quad\quad \quad\underline{\tfrac{1}{m}\Z/\Z}=(\tfrac{1}{m}\Z/\Z, \msW_m^{\text{ur}})\quad\quad \quad\textup{in }\SMod_F.\]
Similarly, we will denote by $\underline{\mu_m}$ the dual object   $(\mu_m, \msW_m^{\text{ur}, \perp})$.\end{notat}

The  Selmer group   of $\underline{\mu_m}$   turns out to be isomorphic to the $m$-Selmer group of $F$  \cite[Definition 5.2.4]{Cohen00}, whose definition we now recall.

\begin{defn} 
The \emph{$m$-Selmer group} of $F$ is defined to be the group
\[\textup{Sel}^m F=\big\{ \alpha \in F^{\times}/(F^{\times})^m \,:\,\, v(\alpha)  \equiv 0~~\,\textup{mod }m\, \textup{ for all nonarch. places }v \textup{ of }F\}.\]
Here we are thinking of a nonarchimedean place $v$ in terms of the corresponding normalized valuation.
Note that given $\alpha\in F^\times$ representing an element in $ \textup{Sel}^m F$, we have 
$(\alpha)=I^m$
for some fractional ideal $I$ of $F$. This defines a homomorphism
\[\pi_{\text{Cl}, m}:\textup{Sel}^m F \to \Cl F[m]\]
taking $\alpha$ to  the class of $I$. The $m$-Selmer group of $F$ then sits in the short exact sequence
\begin{equation}
\label{eq:Sel_NF}
1 \to \mcO_F^{\times}/(\mcO_F^{\times})^m \xrightarrow{\quad} \Sel^m F \xrightarrow{\,\,\pi_{\text{Cl},m}\,\,} \Cl F[m] \to 1,
\end{equation}
the first  map induced by the natural inclusion of $\mathcal{O}_{F}^{\times}$ into $F^\times$. 
\end{defn}

\begin{notat}
Write $\delta_m$ for the connecting map 
\[\delta_m\colon F^{\times}/(F^{\times})^m \isoarrow H^1(G_F, \mu_m)\]
associated to the sequence 
\[1 \to \mu_m \to \Fstar \to \Fstar \to 1.\]
 For a place $v$, we write $\delta_{m,v}$ for the corresponding local isomorphism
\[F_v^{\times}/F_v^{\times m} \isoarrow H^1(G_v,\mu_m).\]
\end{notat}

For a nonarchimedean place $v$ of $F$ the local Tate pairing 
\[H^1(F_v,\tfrac{1}{m}\mathbb{Z}/\mathbb{Z})\otimes H^1(F_v,\mu_m)\to \mathbb{Q}/\mathbb{Z}\]
is given by 
\begin{equation}\label{eq:local_artin_map_pairing}
(\chi,\delta_{m,v}(x))\longmapsto \chi(\textup{rec}_{F_v}(x)),  
\end{equation}
where  $\textup{rec}_{F_v}:F_v^\times \to G_v^\textup{ab}$
 is the local Artin map.
This follows from \cite[Theorem 7.2.13]{Neuk08}  and antisymmetry of the cup-product. 

\begin{lem} \label{lem:relevance_of_m_sel}
The map $\delta_m$ gives an isomorphism
\[ \Sel^m F \isoarrow \textup{Sel}~\underline{\mu_m}.\]
\end{lem}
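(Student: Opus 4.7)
The plan is to leverage the fact that $\delta_m$ is already known to be a global isomorphism $F^{\times}/(F^{\times})^m \isoarrow H^1(G_F, \mu_m)$ by Kummer theory (applied to the sequence $1 \to \mu_m \to (F^s)^{\times} \to (F^s)^{\times} \to 1$ together with Hilbert 90). It then suffices to verify that, under the local Kummer maps $\delta_{m, v}$, the local conditions cutting out $\Sel^m F$ correspond place-by-place to the local conditions $\msW_m^{\textup{ur}, \perp}$ cutting out $\Sel\,\underline{\mu_m}$.

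First I would handle the nonarchimedean places. The group $H^1_{\textup{ur}}(G_v, \tfrac{1}{m}\Z/\Z) = \Hom(G_v/I_v, \tfrac{1}{m}\Z/\Z)$ is cyclic, generated by the unramified character $\chi_v$ sending Frobenius to $1/m$, so the restriction of $\msW_m^{\textup{ur},\perp}$ to $v$ is exactly the kernel of the map $H^1(G_v, \mu_m) \to \QQ/\Z$ given by pairing with $\chi_v$. Using \eqref{eq:local_artin_map_pairing}, for $\alpha \in F_v^{\times}$ one has
\[\bigl\langle \chi_v,\, \delta_{m, v}(\alpha)\bigr\rangle \,=\, \chi_v(\textup{rec}_{F_v}(\alpha)) \,\equiv\, \frac{v(\alpha)}{m} \pmod{1},\]
since $\chi_v$ factors through the unramified quotient and $\textup{rec}_{F_v}(\alpha)$ projects to the $v(\alpha)$-th power of Frobenius modulo inertia. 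Hence $\delta_{m,v}(\alpha)$ lies in the local component of $\msW_m^{\textup{ur},\perp}$ if and only if $m \mid v(\alpha)$, which matches exactly the local condition at $v$ defining $\Sel^m F$.

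The archimedean places are essentially bookkeeping: the convention on $\msW_m^{\textup{ur}}$ at these places is the one already used above to identify $\Sel\,\underline{\tfrac{1}{m}\Z/\Z}$ with $\Cl^* F[m]$, and its orthogonal complement under local Tate duality imposes no constraint on the Kummer representative, matching the absence of any archimedean condition in the definition of $\Sel^m F$. (Complex places are trivial since $H^1(G_v, \mu_m) = 0$.) Combining these local matchings, $\delta_m$ sends $\Sel^m F$ bijectively onto the subgroup of $H^1(G_F, \mu_m)$ cut out by $\msW_m^{\textup{ur},\perp}$ at every place, which gives the claimed isomorphism.

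The main subtlety is that at places $v \mid m$, the annihilator $\msW_m^{\textup{ur},\perp}|_v$ does \emph{not} coincide with the Kummer-theoretic unramified subgroup $H^1_{\textup{ur}}(G_v, \mu_m)$, but only with the larger subgroup detected by the valuation condition; this is consistent with, and indeed motivates, the fact that the definition of $\Sel^m F$ involves only valuations and places no constraint beyond $m \mid v(\alpha)$ at places dividing $m$.
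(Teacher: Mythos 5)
Your proof is correct and takes essentially the same approach as the paper: both reduce to identifying, place by place, the orthogonal complement of $H^1_{\textup{ur}}(G_v, \tfrac{1}{m}\Z/\Z)$ under local Tate duality with the valuation condition cutting out $\Sel^m F$, using the description \eqref{eq:local_artin_map_pairing} of the local pairing via the Artin map. The only difference is cosmetic: where you carry out the explicit computation $\chi_v(\textup{rec}_{F_v}(\alpha)) \equiv v(\alpha)/m$, the paper cites \cite[Proposition XIII.13]{Serre79} to conclude that the annihilator equals $\delta_{m,v}(\mathcal{O}_{F_v}^{\times}/\mathcal{O}_{F_v}^{\times m})$, which is the same subgroup.
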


\begin{proof}
For each nonarchimedean place $v$, it follows from \eqref{eq:local_artin_map_pairing} and \cite[ Proposition XIII.13]{Serre79} that the orthogonal complement of $H^1 (G_v/I_v,\tfrac{1}{m}\mathbb{Z}/\mathbb{Z})$ under the local Tate pairing is  equal to 
\begin{equation*} \label{eq:orthog_of_un_is_units}
\delta_{m,v}\left(\mathcal{O}_{F_v}^{\times}/\mathcal{O}_{F_v}^{\times m}\right).
\end{equation*}
Thus the Selmer group $\textup{Sel}~\underline{\mu_m}$  corresponds under $\delta_m^{-1}$ to the group
\[\ker\Bigg(F^\times/F^{\times m}\to \prod_{\substack{v~\textup{place of }F\\ v\textup{ nonarch.}}}(F_v^{\times}/F_v^{\times m})/\mathcal{O}_{F_v}^{\times}\Bigg).\]
This kernel is precisely the $m$-Selmer group of $F$.  
\end{proof}

Given positive integers $a, b$, we now consider the exact sequence
\begin{equation}
\label{eq:Eab_def}
 E(a, b)\, =\, \left[0 \to \underline{\tfrac{1}{a}\Z/\Z} \xrightarrow{\quad} \underline{\tfrac{1}{ab}\Z/\Z}  \xrightarrow{\,\,\cdot a\,\,} \underline{\tfrac{1}{b}\Z/\Z} \to 0\right]
\end{equation}
in the category $\SMod_F$. The associated Cassels--Tate pairing gives a pairing
\[\CTP_{E(a, b)} \colon \textup{Cl}^*F[b]\otimes \textup{Sel}~ \underline{\mu_a}  \to \mathbb{Q}/\mathbb{Z}.\]
This pairing is related to the reciprocity pairing \eqref{eq:reciprocity_pairing_intro} as follows. 

\begin{prop} \label{prop:agrees_with_artin_map}
Choose $\phi\in \textup{Cl}^* F[b]$ and $\psi\in \textup{Sel}~\underline{\mu_a} $. Write $\alpha=\delta_a^{-1}(\psi)$ for the element of $\textup{Sel}^a F$ corresponding to $\psi$ via Lemma \ref{lem:relevance_of_m_sel}. Then we have
\begin{equation}
\label{eq:EabRP}
\CTP_{E(a, b)}(\phi,\psi)=-\textup{RP}(\phi,\,\pi_{\textup{Cl}, a}(\alpha)).
\end{equation}
\end{prop}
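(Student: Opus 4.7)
The plan is to unwind Definition \ref{def:CTP_small} for the sequence $E(a,b)$, identify each local $2$-cocycle $\gamma_v$ up to coboundary with an explicit cocycle of the form $\alpha^{R_v}$ for a $\Z$-valued $2$-cocycle $R_v$ on $G_v$, and then evaluate $\inv_v(\alpha^{R_v})$ using the explicit description \eqref{eq:local_artin_map_pairing} of the local Tate pairing. The minus sign on the right of \eqref{eq:EabRP} will ultimately emerge from an identification of $[R_v]$ with the Bockstein of $-n_v \mathbb{F}_v$, where $\mathbb{F}_v\colon G_v\to \tfrac{1}{ab}\Z/\Z$ is the unramified character sending $\textup{Frob}_v$ to $1/(ab)$ and $n_v/b=\phi(\textup{Frob}_v)$.

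To begin, I would fix a continuous set-theoretic integer lift $n\colon G_F\to \Z$ of $b\phi$, factoring through $\Gal(H_F/F)$, set $f(\sigma)=n(\sigma)/(ab)$, and choose $\beta\in F^s$ with $\beta^a=\alpha$, so that $\overline\psi(\sigma)=\sigma(\beta)/\beta$. Writing $M(\sigma,\tau)=(n(\sigma)+n(\tau)-n(\sigma\tau))/b\in\Z$, a direct check using the cocycle relation $dM=0$ shows that
\[
\epsilon(\sigma,\tau) \;:=\; \sigma\tau(\beta)^{M(\sigma,\tau)}
\]
satisfies $d\epsilon=\iota^{-1}(df)\cup_{M_1}\overline\psi$. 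On the local side, because $\phi$ factors through the Hilbert class field, $\phi_v$ is unramified at every place $v$, so we may take $\overline\phi_{v,M}$ to be the unique unramified homomorphism $G_v\to\tfrac{1}{ab}\Z/\Z$ with $a\,\overline\phi_{v,M}=\phi_v$ (and $\overline\phi_{v,M}=0$ at archimedean $v$).

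Letting $\tilde K\colon G_v\to\Z$ denote the integer lift of $\iota^{-1}(f_v-\overline\phi_{v,M})$ obtained from the natural rational representatives, a short cochain computation yields
\[
\gamma_v(\sigma,\tau) \;=\; (\sigma\tau\beta)^{\tilde K(\sigma)-M(\sigma,\tau)}\cdot (\sigma\beta)^{-\tilde K(\sigma)}.
\]
Since $\beta^a=\alpha\in F^\times$, modifying $\gamma_v$ by the coboundary of $\tilde\lambda(\sigma):=\sigma(\beta)^{-\tilde K(\sigma)}$ reduces $[\gamma_v]$ in $H^2(G_v,(F_v^s)^\times)$ to the class of $\alpha^{R_v(\sigma,\tau)}$, with $R_v=(d\tilde K-M)/a\in Z^2(G_v,\Z)$. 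A direct inspection of the explicit formula $\tilde K(\sigma)=(n(\sigma)-n_v\,\overline\sigma_{(ab)})/b$ then shows $R_v=-n_v L_v$, where $L_v$ is the $\Z$-valued $2$-cocycle arising from the natural rational lift of $\mathbb{F}_v$; thus $[R_v]\in H^2(G_v,\Z)\cong \Hom(G_v,\QQ/\Z)$ is the Bockstein of $-n_v\mathbb{F}_v$ (and vanishes at archimedean $v$). Applying \eqref{eq:local_artin_map_pairing} gives
\[
\inv_v(\gamma_v) \;=\; -n_v\,\mathbb{F}_v(\textup{rec}_{F_v}(\alpha)) \;=\; -\frac{n_v\, v_\mathfrak{p}(\alpha)}{ab} \pmod{\Z}
\]
for each nonarchimedean $\mathfrak{p}=v$. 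Using $(\alpha)=I^a$ to write $v_\mathfrak{p}(\alpha)=a\,v_\mathfrak{p}(I)$ and summing over all places yields $-\sum_{\mathfrak{p}} v_\mathfrak{p}(I)\,\phi(\textup{Frob}_\mathfrak{p})=-\phi(\textup{rec}_F(I))=-\textup{RP}(\phi,\pi_{\textup{Cl},a}(\alpha))$, as claimed.

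The main obstacle is identifying $[R_v]$ as the Bockstein of $-n_v\mathbb{F}_v$: while the conceptual picture is clean, matching two descriptions of the same class at the cochain level, and verifying that the freedom in choosing integer lifts of $\tilde K(\sigma)$ only modifies $\gamma_v$ by a further coboundary, requires careful bookkeeping with rational representatives and signs. Once this is in place, the remaining work is a routine application of local class field theory summed across all places of $F$.
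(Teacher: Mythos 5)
Your proof is correct, but takes a genuinely different route from the paper's.

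The paper's proof is a two-line computation that outsources all cochain manipulation to a pre-packaged formula from \cite[Definition~3.8, Lemma~3.9]{MS21} for Cassels--Tate pairings of sequences where the duality pairing has a particularly clean form; after applying that formula, the invariants are read off via the $H^1\times H^1$ formulation of the local Tate pairing as in \eqref{eq:local_artin_map_pairing}, with the global choice of $\alpha$ replaced locally by a unit times an $a$-th power $\alpha=\alpha_v\beta_v^a$. Your argument instead unwinds Definition~\ref{def:CTP_small} from scratch: the global $\epsilon(\sigma,\tau)=(\sigma\tau\beta)^{M(\sigma,\tau)}$ is verified directly, the local $\gamma_v$ is massaged by a coboundary into $\alpha^{R_v}$, and the invariant is computed via the $H^0\times H^2$ formulation of the local pairing after recognizing $[R_v]$ as (minus $n_v$ times) the Bockstein of the unramified character $\mathbb{F}_v$. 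Both evaluations ultimately call on the same normalization of the local Artin map (uniformizer $\mapsto$ Frobenius, as in Neukirch), so your citation of \eqref{eq:local_artin_map_pairing} is slightly imprecise --- you are really using the equivalent identity $\inv_v(\alpha\cup\delta\chi)=\chi(\textup{rec}_{F_v}(\alpha))$ rather than the $H^1\times H^1$ version stated there --- but the underlying class field theory fact is the same. What your approach buys is self-containedness: it does not require the reader to have internalized the simplified Cassels--Tate formula from \cite{MS21}, and the Bockstein picture makes the sign transparent. What the paper's approach buys is brevity and the absence of any cochain bookkeeping. Two small points worth tightening in your write-up: the integer $n_v$ and hence the lift $\overline\phi_{v,M}$ are only unique up to a choice, which you acknowledge but could dispatch immediately by noting the pairing's independence of all choices (established in \cite{MS21}); and the vanishing at archimedean $v$, which you assert, follows because $\phi$ restricts trivially to archimedean decomposition groups (as $H_F/F$ splits there), so $\overline\phi_{v,M}=0$ forces $R_v=0$.
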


\begin{proof}
View $\alpha$ as an element in $F^\times$ by picking a representative for its class in $\textup{Sel}^m F$.
For each nonarchimedean place $v$, choose $\alpha_v, \beta_v$ in $F_v^{\times}$ so $\alpha_v$ has trivial valuation and so
$\alpha = \alpha_v \beta_v^a.$
From \cite[Definition 3.8 and Lemma 3.9]{MS21}, we find
\begin{align*}
\CTP_{E(a, b)}(\phi, \,\psi) = \,&-\sum_{\substack{v \text{ nonarch.}}} \inv_v\left(\text{res}_{G_v}(\phi) \cup \delta_{ab,\, v}(\alpha/\alpha_v)\right) \\
= \,&-\sum_{\substack{v \text{ nonarch.}}} \inv_v\left(\text{res}_{G_v}(\phi)\cup \delta_{b,\, v}(\beta_v)\right).
\end{align*}
Taking $\text{rec}_{F_v}\colon F_v^{\times} \to G_v^{\text{ab}}$ to be the local Artin map for each nonarchimedean $v$, we have
\[\text{res}_{G_v}(\phi)\cup \delta_{b, v}(\beta_v) \,=\, \phi(\textup{rec}_{F_v}(x)).\] 
This follows from \cite[Theorem 7.2.13]{Neuk08}  and antisymmetry of the cup-product. Summing these terms gives \eqref{eq:EabRP}.
\end{proof}

\subsection{Self-duality in the presence of roots of unity}

Fix, once and for all, an isomorphism
\begin{equation} \label{eq:root_of_unity_map_g}
g \colon \QQ/\Z \isoarrow \bigcup_{n \ge 1} \mu_n
\end{equation}
between $\QQ/\Z$ and the set of roots of unity in $F^s$.

Suppose $F$ contains $\mu_m$ for some integer $m\geq 1$. Then the restriction of $g$ to $\tfrac{1}{m}\mathbb{Z}/\mathbb{Z}$ is $G_F$-equivariant. Furthermore, we have 
\begin{equation*}
\label{eq:gm_conds}
g(\msW^{\text{ur}}_m) \subseteq \msW^{\text{ur}, \perp}_m,
\end{equation*}
since $H^1_{\text{ur}}(G_v, \tfrac{1}{m}\Z/\Z)$ is orthogonal to itself under local Tate duality (this can also be checked by noting that, if the extension $F_v(\alpha^{1/m})/F_v$ is unramified for some choice of prime $v$ and $\alpha \in F_v^{\times}$, then $\alpha$ must have valuation divisible by $m$). In particular, we obtain a morphism
\begin{equation} \label{eq:g_morphism_SMod}
 g_m:\underline{\tfrac{1}{m}\Z/\Z}  \longrightarrow \underline{\mu_m},
\end{equation}
 which is readily checked to be self-dual. Using Proposition \ref{prop:agrees_with_artin_map}, we can  restate Proposition \ref{prop:cg_sym_intro} in the following form.
\begin{prop}
\label{prop:cg_Sel_sym}
Choose positive integers $a, b$ such that $F$ contains $\mu_{ab}$, and take $E(a, b)$ to be the exact sequence defined in \eqref{eq:Eab_def}. Then we have
\[\CTP_{E(a, b)}(\phi, g_a(\psi)) = \CTP_{E(b, a)}(\psi, g_b(\phi))\quad\text{for all }\, \phi \in \Cl^*F[b], \,\, \psi \in \Cl^*F[a].\]
In particular, in the case that $a = b$, the pairing $\CTP_{E(a, a)}(\text{---}, g_{a}(\text{---}))$ is symmetric.
\end{prop}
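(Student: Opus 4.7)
The plan is to deduce the identity by combining two of the structural properties of the Cassels--Tate pairing established in \cite{MS21}: the duality identity $\CTP_E(\phi,\psi) = \CTP_{E^\vee}(\psi,\phi)$ and naturality with respect to morphisms of short exact sequences. The key observation is that $E(b,a)$ and the dual sequence $E(a,b)^\vee$ are isomorphic in $\SMod_F$ via the collection of maps $g_m$.

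First I would identify $E(a,b)^\vee$ explicitly. Dualizing term by term, using the evident identifications $(\underline{\tfrac{1}{m}\Z/\Z})^\vee \cong \underline{\mu_m}$ (the orthogonality of $\msW^{\textup{ur}}_m$ under Tate duality is exactly what makes this an isomorphism in $\SMod_F$), one finds
\[E(a,b)^\vee \,=\, \left[0 \to \underline{\mu_b} \to \underline{\mu_{ab}} \xrightarrow{(\cdot)^b} \underline{\mu_a} \to 0\right],\]
with the first map being inclusion. Because $g$ is a group homomorphism, the triple $(g_b, g_{ab}, g_a)$ gives a morphism of short exact sequences $E(b,a) \to E(a,b)^\vee$ in $\SMod_F$; commutativity of the two squares is immediate from multiplicativity of $g$, and compatibility with local conditions is exactly the content of $g_m(\msW^{\textup{ur}}_m) \subseteq \msW^{\textup{ur},\perp}_m$ noted just above the proposition.

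With this morphism in hand, naturality of the Cassels--Tate pairing (applied to $\phi \in \Cl^*F[a]$ and $\psi \in \Cl^*F[b]$) yields
\[\CTP_{E(b,a)}\bigl(\phi,\, g_b^\vee(\psi)\bigr) \,=\, \CTP_{E(a,b)^\vee}\bigl(g_a(\phi),\, \psi\bigr),\]
and the self-duality of $g_b$ lets us replace $g_b^\vee$ with $g_b$. On the other hand, the duality identity for $E(a,b)$ gives
\[\CTP_{E(a,b)^\vee}\bigl(g_a(\phi),\, \psi\bigr) \,=\, \CTP_{E(a,b)}\bigl(\psi,\, g_a(\phi)\bigr).\]
Chaining these two equalities and swapping the names of $\phi$ and $\psi$ produces precisely the claimed symmetry. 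The $a=b$ case of the resulting identity is the symmetry of $\CTP_{E(a,a)}(\text{---},\, g_a(\text{---}))$.

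The only substantive step is the verification that $(g_b, g_{ab}, g_a)$ really defines a morphism $E(b,a) \to E(a,b)^\vee$ in $\SMod_F$, which amounts to tracking the chain of identifications $(\underline{\tfrac{1}{m}\Z/\Z})^\vee \cong \underline{\mu_m}$ and the self-duality of $g_m$ carefully. Once this bookkeeping is in place, the proof is a purely formal consequence of naturality and the duality identity, with no further manipulation of cochains required.
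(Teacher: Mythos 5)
Your proposal is correct and takes essentially the same approach as the paper: both use the morphism $(g_b, g_{ab}, g_a)\colon E(b,a) \to E(a,b)^\vee$ in $\SMod_F$, then combine naturality with the duality identity and the self-duality of $g_m$. The only difference is a cosmetic reordering of the two applications (you apply naturality first, then duality; the paper does the reverse), which changes nothing substantive.
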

\begin{proof}
From the above discussion  we have  a morphism of exact sequences
\[
\begin{tikzcd}[column sep =small]
 E(b, a)\,=\,\big[\,0 \arrow{r} &\underline{\tfrac{1}{b}\Z/\Z} \arrow{d}{g_b } \arrow{r}{}& \underline{\tfrac{1}{ab}\Z/\Z}\arrow{d}{ g_{ab} }    \arrow{r}& \underline{\tfrac{1}{a}\Z/\Z}\arrow{d}{g_a }    \arrow{r} & 0\,\big] \\
 E(a, b)^{\vee}\,=\,\big[\,0 \arrow{r} & \underline{\mu_b}  \arrow{r} & \underline{\mu_{ab}}  \arrow{r} & \underline{\mu_a} \arrow{r} & 0\,\big].
\end{tikzcd}\]
Given $\phi$ and $\psi$ as in the proposition statement, we can combine the duality identity and naturality to give 
\[\CTP_{E(a, b)}(\phi, g_a(\psi)) = \CTP_{E(b, a)^{\vee}}(g_a(\psi), \phi) =  \CTP_{E(b, a)}(\psi, g_b^{\vee}(\phi)).\]
Since $g_b$ is self-dual, we have the result. 
\end{proof}
\begin{proof}[Proof of Proposition \ref{prop:cg_sym_intro}]
Take $(\alpha) = I^{a}$ and $(\beta) = J^{b}$ as in Proposition \ref{prop:cg_sym_intro}. Noting that $J$ and $\pi_{\text{Cl},b}(\beta)$ represent the same class in $\Cl\, F$, and noting that $\chi_{b, \beta}=\delta_b(\beta)$ by definition, we have
\begin{align*}
 g^{-1}\circ \chi_{a, \alpha}(\text{rec}_F(J)) \,=&\, \text{RP}(g^{-1} \circ \chi_{a, \alpha}, \,J) \\
=&\, -\CTP_{E(b, a)}(g^{-1} \circ \chi_{a, \alpha},\, \chi_{b, \beta}) \quad\text{from Proposition \ref{prop:agrees_with_artin_map}.}
\end{align*}
Repeating this argument for $\chi_{b, \beta}(\text{rec}_F(I))$, the claimed identity
\[ \chi_{a, \,\alpha}\left(\textup{rec}_F\left(J\right)\right) = \chi_{b, \,\beta}\left(\textup{rec}_F\left(I\right)\right)\]
is equivalent to
\[\CTP_{E(b, a)}(g^{-1} \circ \chi_{a, \alpha},\, \chi_{b, \beta}) = \CTP_{E(a, b)}(g^{-1} \circ \chi_{b, \beta},\, \chi_{a, \alpha}),\]
which is a consequence of Proposition \ref{prop:cg_Sel_sym}.  
\end{proof}

\subsection{Proof of Proposition \ref{prop:cg_alt_intro}} \label{sec:theta_complement_class}
We now turn to proving Proposition \ref{prop:cg_alt_intro}. Fix $F$, $\kappa$, $m$, $d$, $\alpha$, and $I$ as in that proposition. Given Proposition \ref{prop:agrees_with_artin_map}, we wish to prove the identity
\begin{equation}
\label{eq:cg_alt_v3}
d \cdot \CTP_{E(m, m)}( g^{-1} \circ \chi_{m, \,\alpha},\,\kappa( \chi_{m, \,\alpha})) = 0,
\end{equation}
where $g$ is the fixed  isomorphism \eqref{eq:root_of_unity_map_g}.

\begin{lem}
To prove Proposition \ref{prop:cg_alt_intro}, it suffices to establish \eqref{eq:cg_alt_v3} in the case where $d = 1$ and $F$ contains $\mu_{m^2}$.
\end{lem}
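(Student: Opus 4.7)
The plan is to prove the reduction by base change to the extension $F' = F(\mu_{m^2})$. Via Proposition \ref{prop:agrees_with_artin_map}, \eqref{eq:cg_alt_v3} translates into the reciprocity identity $\chi_{m,\alpha}(\textup{rec}_F(\kappa I))^d = 1$, while the special case ($d = 1$) becomes $\chi_{m,\alpha}(\textup{rec}_F(\kappa I)) = 1$ under the stronger hypothesis $F \supseteq \mu_{m^2}$ with $\kappa$ inverting $\mu_{m^2}$. So the task reduces to deducing the former (under the original hypothesis $F \supseteq \mu_{m^2/d}$) from the latter.

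The key construction is an involution $\kappa' \colon F' \to F'$ extending $\kappa$ and acting as $\zeta \mapsto \zeta^{-1}$ on $\mu_{m^2}$. I would build $\kappa'$ by induction along a tower $F = F_0 \subset F_1 \subset \cdots \subset F_{\log_2 d} = F'$ with $F_{i+1} = F_i(\zeta_{2^{i+1} m^2/d})$. At step $i$, since $\zeta_{2^{i+1} m^2/d}^2 = \zeta_{2^i m^2/d} \in F_i$, the extension $F_{i+1}/F_i$ has degree at most $2$, and among the two extensions of the current involution to $F_{i+1}$, one sends $\zeta_{2^{i+1} m^2/d}$ to its inverse; we select this one. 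Iterating yields $\kappa'$, which inverts $\mu_{m^2}$ by construction and squares to the identity.

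Apply the assumed special case to $(F', m, \alpha, I \mathcal{O}_{F'}, \kappa')$. The hypothesis that $F'(\alpha^{1/m})/F'$ is unramified and splits at primes above $2$ descends from the corresponding hypothesis for $F$ under base change, since unramified base extensions preserve the property, and completions agree at primes of $F'$ above places of $F$ where $F(\alpha^{1/m})$ splits. The conclusion is $\chi_{m,\alpha}^{F'}(\textup{rec}_{F'}(\kappa' I\mathcal{O}_{F'})) = 1$.

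Finally, descend this identity to $F$: since $\chi_{m,\alpha}^{F'}$ is the restriction of $\chi_{m,\alpha}^F$ under $\Gal(F'(\alpha^{1/m})/F') \hookrightarrow \Gal(F(\alpha^{1/m})/F)$, and since $\textup{rec}_{F'}(\kappa' I\mathcal{O}_{F'})$ restricts to $\textup{rec}_F(N_{F'/F}(\kappa I \mathcal{O}_{F'})) = \textup{rec}_F(\kappa I)^{[F':F]}$ by functoriality of Artin reciprocity, one obtains $\chi_{m,\alpha}(\textup{rec}_F(\kappa I))^{[F':F]} = 1$. The degree $[F':F]$ is a power of $2$ dividing $d$, since $\zeta_{m^2}$ satisfies the degree-$d$ polynomial $X^d - \zeta_{m^2/d}$ over $F$ and all degrees in sight are powers of $2$; this gives $\chi_{m,\alpha}(\textup{rec}_F(\kappa I))^d = 1$ as required. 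The principal obstacle is the inductive construction of $\kappa'$: when an intermediate extension $F_i \subset F_{i+1}$ is trivial, one must separately ensure the current involution already inverts the newly relevant root of unity, a subtle point requiring a careful analysis of which roots of unity lie in $F$ and of how the original $\kappa$ acts on them.
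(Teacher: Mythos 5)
Your proposal takes essentially the same approach as the paper: extend $\kappa$ to an involution $\kappa'$ of $F' = F(\mu_{m^2})$ that inverts $\mu_{m^2}$, apply the $d=1$ case over $F'$, and descend to $F$ by a degree argument. The one genuine difference is in the descent step. The paper stays entirely in the Cassels--Tate framework, using the restriction/corestriction compatibility of Corollary \ref{cor:cor} to obtain
\[
[F':F] \cdot \CTP_{F, E(m,m)}\big(g^{-1}\circ\chi_{F,m,\alpha},\, \kappa(\chi_{F,m,\alpha})\big) = \CTP_{F', E(m,m)}\big(g^{-1}\circ\chi_{F',m,\alpha},\, \kappa'(\chi_{F',m,\alpha})\big)
\]
directly, whereas you first translate to the reciprocity pairing via Proposition \ref{prop:agrees_with_artin_map} and then invoke norm functoriality of Artin reciprocity, $\mathrm{rec}_{F'}(J)|_{F^{\mathrm{ab}}} = \mathrm{rec}_F(N_{F'/F}J)$. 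These are two expressions of the same underlying compatibility, so your route is correct and arguably more elementary, but it carries no additional generality.

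On the subtlety you flag at the end: you are right to be uneasy, but you should note that the paper's own proof has exactly the same issue. The paper writes ``by splitting into cases depending on whether $\zeta$ lies in $F$, we can check that the map $a\zeta + b \mapsto \kappa(a)\zeta^{-1} + \kappa(b)$ defines a field automorphism,'' and this is unproblematic when $\zeta\notin F$ (where one checks multiplicativity reduces to $\kappa(\zeta^2)=\zeta^{-2}$, which holds since $\zeta^2\in\mu_{m^2/d}$). But when $\zeta\in F$ the hypothesis on $\kappa$ only constrains its action on $\mu_{m^2/d}$, and one merely knows $\kappa(\zeta)\in\{\zeta^{-1},-\zeta^{-1}\}$; both possibilities are consistent with $\kappa^2=1$ and $\kappa$ inverting $\mu_{m^2/d}$ (e.g.\ $F=\QQ(\zeta_{16})$, $m=4$, $d=2$, and $\kappa$ the automorphism $\zeta_{16}\mapsto\zeta_{16}^7$). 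In this bad case neither the paper's formula nor your tower construction produces an extension of $\kappa$ inverting $\mu_{m^2}$, and a separate argument --- or a strengthening of the hypotheses to require $\kappa$ to invert all $2$-power roots of unity in $F$ --- is genuinely needed. So the ``principal obstacle'' you identify is real, but it is a defect shared with the source, not a flaw introduced by your variation.
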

\begin{proof} 
Take $\zeta$ to be a fixed generator of $\mu_{2m^2/d}$. By splitting into cases depending on whether $\zeta$ lies in $F$, we can check that the map
\[a\zeta + b \,\mapsto\, \kappa(a) \zeta^{-1} + \kappa(b) \quad\text{for }\, a, b \in F\]
defines a field automorphism of $F(\mu_{2m^2/d})$. Repeating as necessary, we find that $\kappa$ extends to a field involution on $F(\mu_{m^2})$ that acts as $x \mapsto x^{-1}$ on $\mu_{m^2}$.

Using a subscript to record the field we are working over, we use the identity $\chi_{K,\, m, \,\alpha} = \res_{G_K} \chi_{F, \,m,\, \alpha}$ and Corollary \ref{cor:cor} to give
\begin{align*}
\left[F(\mu_{m^2})\,\colon \,F\right] \, &\cdot\,\CTP_{F, \,E(m, m)}(g^{-1} \circ  \chi_{F, \,m,\, \alpha}, \,  \kappa ( \chi_{F, \,m,\, \alpha}))\\
&=\, \CTP_{K,\, E(m, m)}(g^{-1} \circ  \chi_{K, \,m,\, \alpha}, \,  \kappa ( \chi_{K, \,m,\, \alpha})).
\end{align*}
We thus can establish the proposition for $F$ by applying \eqref{eq:cg_alt_v3} for $K$ with $d = 1$. This proves the lemma.
\end{proof}

Without loss of generality, we now restrict to the case $d = 1$.
\begin{notat}
For every positive integer $n$, take $\msW^0_n$ to be the subset of $\msW_n^{\text{ur}}$ consisting of tuples $(\phi_v)_v$ that are trivial at all places dividing $2$.

Take $E$ to be the exact sequence
 \[E\, =\, \left[0 \to \left(\tfrac{1}{m}\Z/\Z, \,\msW_m^{0}\right) \xrightarrow{\quad} \left(\tfrac{1}{m^2}\Z/\Z,\, \msW_{m^2}^{0}\right)  \xrightarrow{\,\,\cdot m\,\,} \left(\tfrac{1}{m}\Z/\Z, \,\msW_m^{0}\right)  \to 0\right]\]
in $\SMod_F$.

Take $F_0$ to be the subfield of $F$ fixed by the involution $\kappa$. Then $F/F_0$ is a quadratic extension. Recall that we defined a functor $\Ind_{F/F_0}$ from $\SMod_F$ to $\SMod_{F_0}$ in Definition \ref{defn:Ind}. Writing this functor simply as $\Ind$, we have an exact sequence
\[\Ind\, E \,=\, \left[0 \to \Ind\,\tfrac{1}{m}\Z/\Z \xrightarrow{\quad}  \Ind\, \tfrac{1}{m^2}\Z/\Z \xrightarrow{\,\,\cdot m\,\,}  \Ind\, \tfrac{1}{m}\Z/\Z \to 0\right]\]
in $\SMod_{F_0}$, where we have omitted the local conditions decorating these objects.
\end{notat}

Our next construction uses the language of theta groups detailed in \cite[Section 5]{MS21}.
\begin{notat}
\label{notat:class_theta}
Choose any positive divisor $n$ of $m^2$, and take $\zeta = g(1/n)$. Any element in $\Ind\, \tfrac{1}{n}\Z/\Z$ can be written as a linear combination of the elements
\[[1] \otimes \tfrac{1}{n} \quad\text{and}\quad [\kappa] \otimes \tfrac{1}{n} = \kappa\left([1] \otimes \tfrac{1}{n}\right).\]
We will denote these elements by $x_0$ and $y_0 = \kappa x_0$, respectively. In the case where we need to distinguish between different $n$, we will write them as $x_{0,n}$ and $y_{0,n}$.

Define $\mcalH_-(n)$ to be the group
\[\big \langle x, y, z \,\big|\,\, [x, z] = [y, z] =  x^n= y^n =  z^n = 1, \,\,[x,y]= z\big\rangle.\]
This is a quotient of the discrete Heisenberg group. It fits in the exact sequence
\begin{equation}
\label{eq:theta_cg_Ind_es}
1 \to \mu_n \to \mcalH_-(n) \to \Ind\,  \tfrac{1}{n}\Z/\Z \to 0
\end{equation}
where the fixed primitive root of unity $\zeta$ in $\mu_n$ is sent to $z$, where $x$ is sent to $x_0$, and where $y$ is sent to $y_0$. We can check that the $\Gal(F/F_0)$ action of $\mcalH_-(n)$ given by
\[\kappa x = y, \quad \kappa y = x, \quad \kappa z = z^{-1}\]
is well-defined, and that it makes the two maps in \eqref{eq:theta_cg_Ind_es} equivariant. The pushout of \eqref{eq:theta_cg_Ind_es} along $\mu_n \hookrightarrow (F^s)^{\times}$  defines a theta group
\begin{equation} \label{eq:theta_for_class_sequence}
1 \to (\Fsep)^{\times} \to  \mcalH(n) \to  \Ind\,  \tfrac{1}{n}\Z/\Z  \to 0,
\end{equation}
and the associated map $f_{\mathcal{H}(n)}$ (see \cite[Definition 5.1]{MS21}) is the isomorphism  
\[f_{\mcalH(n)}\colon \Ind\,\tfrac{1}{n}\Z/\Z \isoarrow\left (\Ind\,\tfrac{1}{n}\Z/\Z\right)^{\vee}\]
defined by
\begin{align*}
&f_{\mcalH(n)}(x_0)(x_0) = 1,\quad f_{\mcalH(n)}(y_0)(y_0) = 1,\\
& f_{\mcalH(n)}(y_0)(x_0) = \zeta,\quad f_{\mcalH(n)}(x_0)(y_0) =\zeta^{-1}.
\end{align*}
One sees from this that $f_{\mathcal{H}(n)}$  is equal to the composition of the maps
\begin{equation}
\label{eq:theta_pk}
\Ind\,\tfrac{1}{n}\Z/\Z  \xrightarrow{\,\,\rho_{\kappa}\,\,}\Ind\,\tfrac{1}{n}\Z/\Z  \xrightarrow{\,\,\Ind\, g\,\,} \Ind\, \mu_n  \xrightarrow{\,\,t\,\,}  \left(\Ind\,\tfrac{1}{n}\Z/\Z\right)^{\vee},
\end{equation}
where $t$ is defined as in \eqref{eq:where_t_is_defined} and $\rho_{\kappa}$ is defined  as in Figure \eqref{fig:nat_mor}.
\end{notat}

For the statements of the following  results, we refer to \cite[Definition 5.3]{MS21} for the notion of isotropic  local conditions, and to \cite[Definition 5.7]{MS21} for the definition of the Poonen--Stoll class.

\begin{lem}
\label{lem:class_isotropy} 
Given the setup of Notation \ref{notat:class_theta}, the local conditions $\shap_{\textup{loc}}(\msW^0_n)$ are isotropic with respect to the theta group $\mathcal{H}(n)$.
\end{lem}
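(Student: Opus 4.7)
The plan is to establish the lemma by checking isotropy one place at a time. By [MS21, Definition 5.3], it suffices to show that at each place $v$ of $F_0$ the connecting homomorphism
\[\partial_v\colon H^1(G_v,\,\Ind\,\tfrac{1}{n}\Z/\Z)\to H^2(G_v,\,(F_v^s)^{\times})=\textup{Br}(F_v)\]
associated to the central extension \eqref{eq:theta_for_class_sequence} annihilates the local condition $\shap_{\textup{loc},v}(\msW_n^0)$, or equivalently that every such class lifts to $H^1(G_v,\mcalH(n))$. I will split into three cases according to the place $v$.

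If $v$ divides $2$, then by the very definition of $\msW_n^0$ the local condition at each place $w$ of $F$ above $v$ is zero, so $\shap_{\textup{loc},v}$ produces the zero subgroup at $v$ and isotropy is vacuous. If $v$ is archimedean, I use the hypothesis $\mu_{m^2}\subset F$: since $m$ is a power of $2$, this forces $i\in F$, so $F$ is totally imaginary. Hence every place $w$ of $F$ above $v$ is complex, $H^1(G_w,\tfrac{1}{n}\Z/\Z)=0$, and again the local condition is trivial.

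The substantive case is a nonarchimedean place $v\nmid 2$. By \eqref{eq:KF_unr}, the local condition at $v$ equals $H^1_{\textup{ur}}(G_v,\Ind\,\tfrac{1}{n}\Z/\Z)$. Because $n$ is a power of $2$ while the residue characteristic at $v$ is odd, the natural inclusion $\mu_n\hookrightarrow \mcO_{F_v^{\textup{ur}}}^{\times}$ is $G_v$-equivariant, and pushing out \eqref{eq:theta_cg_Ind_es} along it yields an unramified refinement
\[1\to\mcO_{F_v^{\textup{ur}}}^{\times}\to\mcalH(n)^{\textup{ur}}\to\Ind\,\tfrac{1}{n}\Z/\Z\to 0\]
of the $G_v$-restriction of $\mcalH(n)$. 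Its connecting map factors through $H^2(G_v,\mcO_{F_v^{\textup{ur}}}^{\times})$, and on unramified classes further through $H^2(G_v/I_v,\mcO_{F_v^{\textup{ur}}}^{\times})$. The latter group vanishes because $G_v/I_v\cong\widehat{\Z}$ has cohomological dimension one and $\sigma-1$ acts surjectively on $\mcO_{F_v^{\textup{ur}}}^{\times}$ by Hilbert $90$ and the valuation exact sequence. Thus $\partial_v$ kills every unramified class.

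I expect the main obstacle to be rigorously matching the $G_v$-action on $\mcalH_-(n)$---which permutes the generators $x_0$ and $y_0$ according to the $G_{F_0}$-action on the coset space $G_{F_0}/G_F$---with its pushout into $\mcO_{F_v^{\textup{ur}}}^{\times}$ in the case that $F/F_0$ is ramified at some prime above $v$. In that situation $I_v$ acts nontrivially on $\Ind\,\tfrac{1}{n}\Z/\Z$, so isolating the correct $I_v$-invariant subgroup of $\mcalH(n)^{\textup{ur}}$ and verifying that every unramified $1$-cocycle really lifts there (rather than merely up to a $1$-coboundary landing in $\mcO_{F_v^{\textup{ur}}}^{\times}$) is the principal technical point, best handled by chasing the inflation--restriction sequence for the central extension.
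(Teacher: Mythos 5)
Your overall strategy---check isotropy place by place via vanishing of unramified $H^2$---matches the paper, and your handling of $v\mid 2$ and archimedean $v$ (using that $\mu_4\subset F$ forces $F$ totally imaginary) is correct. However, there is a genuine gap in your third case, and you flag it yourself: you do not resolve what happens at a nonarchimedean $v\nmid 2$ where $F/F_0$ ramifies. Your ``unramified refinement'' argument genuinely fails there, since the pushout of \eqref{eq:theta_cg_Ind_es} along $\mu_n\hookrightarrow\mcO_{F_v^{\textup{ur}}}^\times$ is no longer a sequence of $G_v/I_v$-modules when $I_v$ acts nontrivially on $\Ind\,\tfrac{1}{n}\Z/\Z$, so the connecting map need not factor through $H^2(G_v/I_v,\mcO_{F_v^{\textup{ur}}}^\times)$.

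The missing observation, which the paper uses and which collapses this case entirely, is that $F=F_0(\mu_4)$. Indeed $\kappa$ restricts to $\zeta\mapsto\zeta^{-1}$ on $\mu_{m^2}$, and since $m\geq 2$ one has $\mu_4\subset F$ with $\kappa$ acting nontrivially on it; thus $F_0(\mu_4)$ is a nontrivial subextension of the quadratic extension $F/F_0$, hence equals $F$. But $F_0(\mu_4)/F_0$ is unramified at every finite place not over $2$ (since $x^4-1$ is separable mod $v$ for $v\nmid 2$). Consequently the ramified-at-$v$, $v\nmid 2$, $v$ nonarchimedean subcase---your ``principal technical point''---never occurs; once $F/F_0$ is unramified at $v$, your argument (or equivalently the paper's, which just observes $q_{G_v}(\phi_v)\in H^2_{\textup{ur}}(G_v,\mu_n)=0$) goes through. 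With that one line supplied, your proof would be complete and essentially identical to the paper's.
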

\begin{proof}
Choose $(\phi_v)_v$ in  $\shap_{\textup{loc}} (\msW^0_n)$. Given any place $v$ of $F_0$, we wish to show that $q_{G_v}(\phi_v)$ is zero, where 
\[q_{G_v}: H^1\left(G_v,\Ind \tfrac{1}{n}\mathbb{Z}/\mathbb{Z}\right)\longrightarrow H^2\left(G_v,(F_v^s)^\times\right)\] 
is the connecting map on cohomology associated to \eqref{eq:theta_for_class_sequence} (cf. \cite[Notation 5.2]{MS21}). 

If $F/F_0$ is not ramified at $v$, then $q_{G_v}(\phi_v)$ lies in the image of $H^2_{\text{ur}}(G_v, \mu_n)$, and this group is trivial. So we can assume $F/F_0$ is ramified at $v$.

Now $F_0(\mu_4)$ is a quadratic extension of $F_0$ contained in $F$, so it equals $F$. Consequently, $F/F_0$ is only ramified at infinite places and places dividing $2$. By the definition of $\msW^0_n$, $\phi_v$ is trivial at all such places, so $q_{G_v}(\phi_v)$ is zero. This gives the lemma.
\end{proof}

\begin{lem}
\label{lem:theta_nogoody}
The Poonen--Stoll class corresponding to the exact sequence $\Ind\, E$ and the theta group $\mcH(m^2)$ is trivial.
\end{lem}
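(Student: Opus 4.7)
The plan is to show vanishing by constructing an explicit cochain-level trivialization of the Poonen--Stoll cocycle, exploiting both the $\kappa$-symmetry built into the theta group $\mathcal{H}(m^2)$ and the presence of $\mu_{m^2}$ in $F$ (which is exactly what the $d=1$ reduction guarantees).

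First, I would specialize the general recipe of \cite[Definition 5.7]{MS21} to the setting at hand to produce an explicit cocycle representative of the Poonen--Stoll class. This cocycle records the failure of a natural set-theoretic section of the theta-group extension $1 \to (F^s)^{\times} \to \mathcal{H}(m^2) \to \Ind\, \tfrac{1}{m^2}\Z/\Z \to 0$ to be simultaneously compatible with the $G_{F_0}$-action and with the subquotient structure coming from $\Ind\, E$. Because the extension $\Ind\, E$ is built from the multiplication-by-$m$ sequence on $\Ind\, \tfrac{1}{m^2}\Z/\Z$ and the sub identifies with (a twist of) the dual of the quotient via $f_{\mathcal{H}(m^2)}$, the Poonen--Stoll cocycle lives in a cohomology group of $\Ind\, \tfrac{1}{m}\Z/\Z$ over $G_{F_0}$.

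Second, I would compute this cocycle in coordinates using the presentation from Notation \ref{notat:class_theta}. The natural lift $a\, x_0 + b\, y_0 \mapsto x^a y^b$, combined with the commutation relation $[x,y]=z$ and the involution $\kappa x = y$, $\kappa y = x$, $\kappa z = z^{-1}$, shows that the Galois discrepancy of this lift is controlled by a quadratic expression in the lift coordinates taking values in $\langle z\rangle\cong \mu_{m^2}$. In particular, modulo coboundaries the Poonen--Stoll cocycle is represented by a symbol of the form $z^{ab}$ (up to the sign conventions of the Heisenberg relation).

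Third, I would exhibit an explicit $0$-cochain killing this discrepancy. The assumption that $F$ contains $\mu_{m^2}$, together with the fixed isomorphism $g$ of \eqref{eq:root_of_unity_map_g}, gives us the roots of unity required to extract the ``half-root'' of $z^{ab}$ needed to cancel the quadratic term; the fact that $\kappa$ acts on these roots by inversion is precisely what makes the resulting cochain $G_{F_0}$-equivariant and hence genuinely trivializes the cocycle in cohomology.

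The main obstacle is the $2$-adic bookkeeping, since $m$ is a power of $2$ and the ``square root'' of $z^{ab}$ must be chosen in a way that is simultaneously compatible with the Galois action, with the Heisenberg commutation relation, and with the identifications in Notation \ref{notat:class_theta}; this is exactly the kind of structure the theta group framework of \cite[Section 5]{MS21} was introduced to handle, so invoking its formalism (rather than ad hoc sign choices) should be what makes the trivialization go through cleanly.
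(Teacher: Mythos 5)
Your plan would work for $\mcH(m)$, but it misdiagnoses the situation for $\mcH(m^2)$, and the misdiagnosis is exactly what makes the paper's proof one line.

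The Poonen--Stoll class is computed from a set-theoretic lift $s\colon \Ind\,\tfrac{1}{m}\Z/\Z \to \mcH_1$ of the inclusion of the subobject, where $\mcH_1$ is the preimage of $\Ind\,\tfrac{1}{m}\Z/\Z$ inside $\mcH(m^2)$; the cocycle records the failure of $s$ to be an equivariant homomorphism. Since $x_{0,m}$ sits inside $\Ind\,\tfrac{1}{m^2}\Z/\Z$ as $m\cdot x_{0,m^2}$, the natural lift sends $a\,x_{0,m}+b\,y_{0,m}$ to $x^{am}y^{bm}$, not to $x^a y^b$ as in your step 2. With the correct exponents, the commutator that could obstruct multiplicativity is $[x^{am},y^{bm}] = z^{abm^2}$, which is \emph{already trivial} because $z$ has order $m^2$ in $\mcH(m^2)$. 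Likewise $\kappa$ simply swaps $x^m$ and $y^m$ and sends $z$ to $z^{-1}$, so the same commutator identity shows $s$ is $\Gal(F/F_0)$-equivariant on the nose. There is therefore no residual $z^{ab}$ discrepancy and no cohomology class left to trivialize; the cocycle produced by this lift is literally zero, which is exactly how the paper argues.

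Your step 3 — choosing a ``half-root'' using $\mu_{m^2}\subset F$ to cancel a quadratic term — is a correct instinct in settings where the natural lift genuinely fails to be a homomorphism (compare the $\sqrt{-1}\cdot xy$ lift in the proof of Proposition \ref{prop:inv_spin}, or the $U/N$ construction used for $\mcH(m)$ in Proposition \ref{prop:Dm_alt}). But here the square of the natural lift is the whole point: because we lifted to the $m$-th powers inside a group of exponent $m^2$, the would-be obstruction lands in $z^{m^2}\langle z\rangle=1$ automatically. Carrying out your cochain computation with the correct lift would reveal there is nothing to cancel, so the half-root argument is a detour built on the incorrect $z^{ab}$ claim rather than a genuinely alternative proof.
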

\begin{proof}
Taking $\mcH_1$ to be the preimage of $\Ind\, \tfrac{1}{m}\Z/\Z$ in $\mcH(m^2)$, we define a homomorphism
\[s \colon \Ind\, \tfrac{1}{m}\Z/\Z \to \mcH_1\]
by mapping $x_{0, m}$ to $x^m$ and $y_{0, m}$ to $y^m$, where $x$ and $y$ are taken from the generators of $\mcH(m^2)$. The map $s$ is equivariant, and is a section of the natural map from $\mcH_1$ to $\tfrac{1}{m}\Z/\Z$. The cocycle $\overline{\psi}_{\text{PS}}$ defined in \cite[Remark 5.8]{MS21} from $s$ is thus trivial, from which the result follows.
\end{proof}

\begin{proof}[Completion of the proof of Proposition \ref{prop:cg_alt_intro}]
The map $f_{\mcH(m^2)}$ above fits in the diagram
\begin{equation}
\label{eq:fhm2_diagram}
\begin{tikzcd}[column sep =small]
 0 \arrow{r} & \Ind\, \tfrac{1}{m}\Z/\Z \arrow{d}{f_{\mcH(m)}}\arrow{r} & \Ind\, \tfrac{1}{m^2}\Z/\Z\arrow{d}{f_{\mcH(m^2)}} \arrow{r}& \Ind\, \tfrac{1}{m}\Z/\Z \arrow{d}{f_{\mcH(m)}} \arrow{r} & 0  \\
 0 \arrow{r} & \left(\Ind\, \tfrac{1}{m}\Z/\Z\right)^{\vee}  \arrow{r} & \left(\Ind\, \tfrac{1}{m^2}\Z/\Z\right)^{\vee}  \arrow{r} & \left(\Ind\, \tfrac{1}{m}\Z/\Z\right)^{\vee} \arrow{r} & 0
\end{tikzcd}
\end{equation}
in $\SMod_{F_0}$. Since the local conditions $\shap_{\text{loc}}(\msW_{m^2}^0)$ are isotropic for $\mcH(m^2)$, and since the associated Poonen--Stoll class is trivial, \cite[ Theorem 5.10]{MS21} gives
\begin{equation}
\label{eq:cg_alt_v4}
\CTP_{\Ind\, E}(\phi, \, f_{\mcH(m)}(\phi)) = 0\quad\text{for all }\,\phi \in \Sel_{F_0} \left(\Ind\, \tfrac{1}{m}\Z/\Z,\, \shap_{\textup{loc}} (\msW^0_m)\right).
\end{equation}
Taking $\phi = \shap_{F/F_0}(\psi)$ for some $\psi$ in $\Sel\left( \tfrac{1}{m}\Z/\Z,\, \msW^0_m\right)$, we have
\begin{alignat*}{2}
0 & = \,\CTP_{\Ind\, E}(\phi, \, f_{\mcH(m)}(\phi)) &&  \\
&= \,\CTP_{\Ind\, E}(\phi, \, t\circ \Ind\, g\circ  \rho_{\kappa}(\phi)) && \text{ from  \eqref{eq:theta_pk},}\\
 &=\, \CTP_E(\shap(\psi),\, t \circ \Ind\, g \circ\shap( \kappa \psi))\,\, && \text{ by commutativity of \eqref{eq:rho_tau_square},} \\
& = \,\CTP_E(\shap(\psi),\, t \circ \shap\circ g(\kappa \psi)) && \text{ by naturality of the Shapiro isomorphism,}\\
&=\, \CTP_E(\psi, \, g(\kappa \psi) )&& \text{ by Theorem \ref{thm:fields_main}.}
\end{alignat*}
This establishes the proposition.
\end{proof}

\section{Bilinearly enhanced class groups}  
\label{ssec:LST}
In \cite{Lipn20}, Lipnowski, Sawin, and Tsimerman generalized the Cohen--Lenstra heuristics to certain families of fields containing many roots of unity. As part of this work, they introduced the notion of a bilinearly enhanced group, which we give now. The equivalence between this definition and the one appearing in \cite[Definition 2.1]{Lipn20} is established by \cite[Lemma 4.2]{Lipn20}.
\begin{notat}
Take $\ell^n$ to be a positive power of a rational prime and choose a finite abelian $\ell$-group $G$. Writing $G^*$ for $\Hom(G, \QQ_{\ell}/\Z_{\ell})$, choose a homomorphism
\[\psi \colon G^*[\ell^n] \to G[\ell^n]\]
and  an infinite sequence $\omega_1, \omega_2, \dots$ of alternating pairings
\[\omega_i\colon G^*[\ell^i] \otimes G^*[\ell^i] \to \QQ_{\ell}/\Z_{\ell}.\]
Taking 
\[\langle\text{--},\,\text{--}\rangle \colon G \times G^* \to \QQ_{\ell}/\Z_{\ell}\]
to be the evaluation pairing, we call the tuple $(G, \psi, (\omega_i)_{i\ge 1})$ an \emph{$\ell^n$-bilinearly enhanced group} if
\begin{equation}
\label{eq:LST_beg_1}
\langle \psi(\ell^r\alpha), \, \beta \rangle - \langle \psi(\ell^r\beta), \, \alpha \rangle  = 2 \cdot \omega_{n+r}(\alpha, \beta)
\end{equation}
for all $r \ge 0$ and $\alpha, \beta \in G^*[\ell^{n+r}]$, and if
\begin{equation}
\label{eq:LST_beg_2}
\omega_i(\ell \alpha, \beta) = \omega_{i+1}(\alpha, \beta)
\end{equation}
for all $i \ge 1$, $\alpha \in G^*[\ell^{i+1}]$, and $\beta  \in G^*[\ell^i]$.
\end{notat}
Given a number field $F$ containing $\mu_{\ell^n}$ with $\ell$ an odd prime, Lipnowsk et al. give an associated $\ell^n$-bilinearly enhanced group 
\begin{equation}
\label{eq:LST_cg_beg}
\left(\Cl\,F[\ell^{\infty}], \, \psi_F, \,(\omega_{F, i})_i\right).
\end{equation}
Here, the map $\psi_F\colon \Cl^*F[\ell^n] \to \Cl\,F[\ell^n]$ is straightforward to define, though it does require a choice of a primitive $(\ell^n)^{th}$ root of unity. In our notation, it is the composition of the maps
\begin{align*}
\Cl^*F[\ell^n] \,=\,\, &\Sel~ \underline{\tfrac{1}{\ell^n}\Z/\Z}  \, \xrightarrow{\,\,g_{\ell^n}\,\,} \,\Sel~\underline{\mu_{\ell^n}}\, \isoarrow \,\Sel^{\ell^n} F\, \xrightarrow{\,\,\pi_{\text{Cl}}\,\,}\, \Cl\, F[\ell^n].
\end{align*} 
The condition \eqref{eq:LST_beg_1} determines the pairings $\omega_i$ for $i \ge n$. For $i < n$, the definition of $\omega_i$ is somewhat subtler. In \cite{Lipn20}, it is defined using Artin--Verdier duality between the \'{e}tale cohomology groups
\[H^1_{\text{\'{e}t}}\left(\text{Spec}\, \mcO_F, \,\Z/\ell^i\Z\right)\quad\text{and}\quad H^2_{\text{\'{e}t}}\left(\text{Spec}\, \mcO_F, \,\mu_{\ell^i}\right).\]
Having defined these objects, Lipnowski et al. then give a heuristic for how often a given $\ell^n$-bilinearly enhanced group appears as the tuple \eqref{eq:LST_cg_beg} as $F$ varies through certain families of number fields containing $\mu_{\ell^n}$.

The existence of the bilinearly enhanced group \eqref{eq:LST_cg_beg} recovers Proposition \ref{prop:cg_sym_intro}  in the case that $\ell$ is odd. For suppose we have chosen nonnegative integers $c$ and $d$ satisfying $c+ d =n$. Then, for $\alpha$ in $\Cl^* F[\ell^c]$ and $\beta$ in $\Cl^*F[\ell^d]$, we have
\begin{equation}
\label{eq:LST_to_MS}
\left\langle \psi_F(\alpha), \,\beta \right\rangle\, -\,\left \langle \psi_F(\beta), \,\alpha\right\rangle \,=\,\omega_{F, n}(\alpha, \beta) \,=\, \omega_{F, n-c}(\ell^c \alpha, \beta) \,=\, 0.
\end{equation}

This is the full extent of what we can conclude about the class group of $F$ and its reciprocity pairing from the mere existence of the bilinearly enhanced group \eqref{eq:LST_cg_beg}.  Specifically, we have the following proposition, whose proof we omit.
\begin{prop}
Take $\ell^n$ to be a positive power of an odd prime. Choose a finite abelian $\ell$-group $G$ and a homomorphism
\[\psi \colon G^*[\ell^n] \to G[\ell^n].\]
Then the following are equivalent:
\begin{enumerate}
\item There is some sequence of alternating pairings $\omega_1, \omega_2, \dots$ so that $(G, \psi, (\omega_i)_{i \ge 1})$ is an $\ell^n$-bilinearly enhanced group.
\item For every pair of positive integers $c, d$ with sum $n$, and for every $\alpha$ in $G^*[\ell^c]$ and $\beta$ in $G^*[\ell^d]$, we have
\[\langle \psi(\alpha),\beta \rangle = \langle \psi(\beta), \alpha\rangle.\]
\end{enumerate}
\end{prop}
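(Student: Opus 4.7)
The implication $(1) \Rightarrow (2)$ follows from a direct calculation. Given bilinearly enhanced data $(G, \psi, (\omega_i))$, fix positive integers $c, d$ with $c + d = n$ and take $\alpha \in G^*[\ell^c]$, $\beta \in G^*[\ell^d]$. Iterating \eqref{eq:LST_beg_2} $c$ times yields
$$\omega_n(\alpha, \beta) = \omega_{n-1}(\ell\alpha, \beta) = \cdots = \omega_d(\ell^c\alpha, \beta) = 0,$$
each step valid because $\beta \in G^*[\ell^d] \subseteq G^*[\ell^{n-k}]$ for $0 \le k \le c$ and the last expression vanishes since $\ell^c\alpha = 0$. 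As $\ell$ is odd, $2 \in \Z_\ell^\times$, so applying \eqref{eq:LST_beg_1} with $r = 0$ gives $\langle\psi(\alpha),\beta\rangle = \langle\psi(\beta),\alpha\rangle$.

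For the reverse direction $(2) \Rightarrow (1)$, the plan is to construct the sequence $(\omega_i)$ in two stages. In the stable range $i \geq n$, I set
$$\omega_i(\alpha, \beta) := \tfrac{1}{2}\bigl(\langle\psi(\ell^{i-n}\alpha),\beta\rangle - \langle\psi(\ell^{i-n}\beta),\alpha\rangle\bigr)$$
on $G^*[\ell^i] \times G^*[\ell^i]$. This is alternating, satisfies \eqref{eq:LST_beg_1}, and a short computation using bilinearity and $\psi(\ell\alpha) = \ell\psi(\alpha)$ verifies the compatibility between $\omega_i$ and $\omega_{i+1}$ for all $i \geq n$.

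For $i < n$ I proceed by descending induction, carrying the strengthened hypothesis that each constructed $\omega_j$ with $j \leq n$ is alternating, is compatible with $\omega_{j+1}$, and vanishes on $G^*[\ell^a] \times G^*[\ell^{j-a}]$ for all $1 \leq a \leq j-1$. The base case $j = n$ is exactly condition (2). In the inductive step, compatibility with $\omega_{i+1}$ forces the prescription $\omega_i(\ell\gamma, \beta) := \omega_{i+1}(\gamma, \beta)$ on $\ell G^*[\ell^{i+1}] \times G^*[\ell^i]$; this is well-defined (independent of lift) by the strengthened hypothesis on $\omega_{i+1}$ applied with $a = 1$, and by skew-symmetry extends to a consistent partial alternating form on $\ell G^*[\ell^{i+1}] \times G^*[\ell^i] \cup G^*[\ell^i] \times \ell G^*[\ell^{i+1}]$. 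The forced values on the overlap $\ell G^*[\ell^{a+1}] \times G^*[\ell^{i-a}]$ are zero by the strengthened hypothesis applied with parameter $a+1$, so the partial form is consistent with the vanishing $\omega_i$ is to satisfy at the next level of the induction.

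The principal obstacle is the extension step: producing an alternating bilinear extension of this partial form to all of $G^*[\ell^i] \times G^*[\ell^i]$ that additionally vanishes on each $G^*[\ell^a] \times G^*[\ell^{i-a}]$ for $1 \leq a \leq i-1$. I would handle this via the structure theorem for finite abelian $\ell$-groups: fixing a decomposition of $G^*[\ell^i]$ into cyclic factors, the alternating form corresponds to a skew-symmetric matrix of entries in $\QQ_\ell/\Z_\ell$ subject to explicit order constraints from the factors, and the prescribed values together with the desired vanishings translate to a linear system in these entries whose consistency is ensured exactly by the strengthened hypothesis for $\omega_{i+1}$. Any choice of the unconstrained entries then yields an acceptable $\omega_i$, completing the induction and producing the sought bilinearly enhanced group structure.
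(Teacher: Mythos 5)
The paper states this proposition without proof (``whose proof we omit''), so there is nothing in the paper to compare against; I am assessing your proposal on its own merits. Your treatment of $(1) \Rightarrow (2)$ is correct, and for $(2) \Rightarrow (1)$ the overall strategy --- the stable-range definition of $\omega_i$ for $i \ge n$, then descending induction carrying the strengthened hypothesis --- is sound, as are the explicit consistency checks you perform (well-definedness via $a = 1$, and the overlap check via $a + 1$).

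The genuine gap is the final paragraph, which you yourself identify as ``the principal obstacle'' and then defer. The statement that the linear system's ``consistency is ensured exactly by the strengthened hypothesis for $\omega_{i+1}$'' is the claim that needs to be proved, not a reason to stop. Concretely: fix a cyclic decomposition $G^* \cong \bigoplus_k \Z/\ell^{e_k}\Z$ with generators $v_k$, so that an alternating form on $G^*[\ell^i]$ is determined by its values on pairs of cyclic-factor generators. For a pair of factors with $e_j, e_k \le i$, the compatibility constraint reads $\ell\,\omega_i(v_j, v_k) = \omega_{i+1}(v_j, v_k)$, while the family of vanishing constraints forces $\omega_i(v_j, v_k)$ to have order at most $\ell^{\max(0,\,e_j + e_k - i)}$; these are simultaneously solvable precisely when $\omega_{i+1}(v_j, v_k)$ has order at most $\ell^{\max(0,\,e_j + e_k - i - 1)}$, and extracting this order bound from the vanishing of $\omega_{i+1}$ on $G^*[\ell^{e_j}] \times G^*[\ell^{i+1-e_j}]$ (the strengthened hypothesis with parameter $a = e_j$) is the essential computation --- it is not in your writeup. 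One must also separately handle pairs with $\max(e_j, e_k) > i$, where the value of $\omega_i$ is forced outright by compatibility but the vanishing constraints still need to be verified. The argument does go through (the strengthened hypothesis is calibrated exactly for this), but as written the core of the reverse implication is unproven.
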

 
There is generally more than one way to complete $(\Cl\, F[\ell^{\infty}], \psi_F)$ to a bilinearly enhanced group. If we want to choose one canonically, we may do so using the Cassels--Tate pairing. This gives an alternative to the approach considered in \cite[4.2.2]{Lipn20}. It seems likely that our pairings agree with those constructed in \cite{Lipn20}.

\begin{notat}
\label{notat:our_beg}
Choose $n$ a positive power  of $\ell$  so $F$ contains $\mu_n$, and fix $g$ as in \eqref{eq:root_of_unity_map_g}.  

Take $E_n$ to be the pushout of $E(n, n)$ along the map $g_n \colon \underline{\tfrac{1}{n}\Z/\Z} \to\underline{\mu_n}$. This exact sequence takes the form
\[E_n\,=\,\left[0 \to \underline{\mu_n} \xrightarrow{\,\,g^{-1}\,\,}  \left(\tfrac{1}{n^2}\Z/\Z, \,\msW_n^{\text{ur}, \perp} + \msW_{n^2}^{\text{ur}}\right) \to \underline{\tfrac{1}{n}\Z/\Z} \to 0\right].\]
The dual sequence takes the form
\[E^{\vee}_n\,=\,\left[0 \to\underline{\mu_n}  \to \left(\mu_{n^2}, \,(\msW_n^{\text{ur}, \perp} + \msW_{n^2}^{\text{ur}})^{\perp}\right) \to \underline{\tfrac{1}{n}\Z/\Z}\to 0\right].\]
These are both extensions of $\underline{\tfrac{1}{n}\Z/\Z} $ by $\underline{\mu_n}$. Furthermore, $E_n$ and $E_n^{\vee}$ are isomorphic as sequences of abelian groups.

We then  define the object $M_n$ in $\SMod_F$ as the central term of the Baer difference of these sequences, so we have an exact sequence
\[E_n - E^{\vee}_n = \left[0 \longrightarrow \underline{\mu_n} \longrightarrow M_n \longrightarrow \underline{\tfrac{1}{n}\Z/\Z} \longrightarrow 0\right].\]
Considered in the category of abelian groups, this exact sequence splits.  Furthermore, since Baer sum commutes naturally with taking duals, we can find a commutative diagram
\[
\begin{tikzcd}[column sep =small]
 E_n - E^{\vee}_n \,=\,\big[\,0 \arrow{r} &\underline{\mu_n} \arrow{d}{\text{Id}} \arrow{r}{}& M_n\arrow{d}{\gamma_n}    \arrow{r}& \underline{\tfrac{1}{n}\Z/\Z}\arrow{d}{-\text{Id}}    \arrow{r} & 0\,\big] \\
( E_n - E^{\vee}_n)^{\vee} \,=\,\big[\,0 \arrow{r} & \underline{\mu_n} \arrow{r} &M_n^\vee \arrow{r} & \underline{\tfrac{1}{n}\Z/\Z}\arrow{r} & 0\,\big] 
\end{tikzcd}\]
in $\SMod_F$, with the central column $\gamma_n$ an isomorphism in $\SMod_F$. We detail an explicit choice for $\gamma_n$ in  Remark \ref{rmk:explicit_M_m_extension}.

Take $m$ to be a positive divisor of $n$. Because $ E_n - E^{\vee}_n$ splits in the category of abelian groups, we can apply the functor $\Hom(\Z/m\Z, \text{---})$ to  produce a new exact sequence of $G_F$-modules, which we denote $(E^{\vee}_n - E_n)[m]$. We have a commutative diagram
\[
\begin{tikzcd}[column sep =small]
 ( E_n - E^{\vee}_n)[m] \,&=\,\big[\,0 \arrow{r} &\underline{\mu_m}   \arrow[hookrightarrow]{d}   \arrow{r}{} & (M_n[m], \msW_{n, m}) \arrow[hookrightarrow]{d} \arrow{r}& \underline{\tfrac{1}{m}\Z/\Z} \arrow[hookrightarrow]{d}    \arrow{r} & 0\,\big] \\
 E_n - E^{\vee}_n\,&=\,\big[\,0 \arrow{r} &\underline{\mu_n} \arrow{d}{n/m} \arrow{r}{}& M_n \arrow{d}{ n/m}    \arrow{r}& \underline{\tfrac{1}{n}\Z/\Z}\arrow{d}{n/m}   \arrow{r} & 0\,\big] \\
 ( E_n - E^{\vee}_n)[m] \,&=\,\big[\,0 \arrow{r} &\underline{\mu_m}\arrow{r}{} & (M_n[m], \msW^-_{n, m}) \arrow{r}& \underline{\tfrac{1}{m}\Z/\Z} \arrow{r} & 0\,\big].
\end{tikzcd}
\]
Here, we have assigned the local conditions $\msW_{n, m}$ in the top row so that the top three vertical maps are all strictly monic in $\SMod_F$. The local conditions $\msW_{n, m}^-$ in the bottom row are assigned so the bottom three vertical maps are all strictly epic.

We immediately see that $\msW_{n, m}^-$ is contained in $\msW_{n, m}$. By easy nonsense, the final map in the bottom row is strictly epic. We then have that the final map in the top row is strictly epic. Similarly, the first map in the first row is strictly monic, so the first map in the final row is also strictly monic. So all rows are exact, and $\msW_{n,m}$ equals $\msW_{n, m}^-$.

The morphism $(M_n[m], \msW_{n, m}) \hookrightarrow M_n$, and the composition of morphisms
\[ (M_n[m], \msW_{n, m})^{\vee} \xrightarrow{\,\, (n/m)^{\vee}\,\,} M_n^\vee \xrightarrow{\,\,\gamma_n^{-1}\,\,}M_n ,\]
are both kernels of the multiplication by $m$ morphism. So there is a unique isomorphism \[\gamma_{n, m}: (M_n[m], \msW_{n, m}) \longrightarrow (M_n[m], \msW_{n, m})^{\vee}\]
in $\SMod_F$ that commutes with the two kernel morphisms. This map fits in the commutative diagram (in $\SMod_F$) with exact rows

\begin{equation} \label{eq:our_beg_anti}
  \begin{tikzcd}[column sep =small]
 (E_n - E^{\vee}_n)[m] \,=\,\big[\,0 \arrow{r} &\underline{\mu_n} \arrow{d}{\text{Id}} \arrow{r}{}& (M_n[m], \msW_{n, m}) \arrow{d}{\gamma_{n,m}}    \arrow{r}& \underline{\tfrac{1}{m}\Z/\Z}\arrow{d}{-1}    \arrow{r} & 0\,\big] \\
( E_n - E^{\vee}_n)[m]^{\vee} \,=\,\big[\,0 \arrow{r} & \underline{\mu_m} \arrow{r} &(M_n[m], \msW_{n,m})^{\vee}  \arrow{r} & \underline{\tfrac{1}{m}\Z/\Z}\arrow{r} & 0\,\big].
\end{tikzcd}  
\end{equation}  
\end{notat}

\begin{prop}
\label{prop:our_beg}
Choose a global field $F$ and a prime $\ell$ not equal to the characteristic of $F$.
Take $n$ to be a power of $\ell$ so $F$ contains $\mu_{\ell^{n}}$. Given a positive divisor $m$ of $n$, take $D(m)$ to be the exact sequence $(E_n - E_n^{\vee})[m]$.

Then, for all divisors $m$ of $n$, the pairing
\[\CTP_{D(m)} \colon \Cl^*F[m] \otimes \Cl^*F[m] \to \QQ/\Z\]
is antisymmetric.

Further, if $a$ and $b$ are positive integers satisfying $m = ab$, we have
\begin{equation}
\label{eq:our_beg_2}
\CTP_{D(m)}(\phi, \psi) = \CTP_{D(a)}(b\phi, \psi)
\end{equation}
for all $\phi$ in  $\Cl^*F[m]$ and $\psi$ in $\Cl^*F[a]$.

Finally, we have
\[\CTP_{D(n)}(\phi, \psi) = \CTP_{E(n, n)}(\phi, g(\psi)) - \CTP_{E(n, n)}(\psi, g(\phi))\]
for all $\phi$ and $\psi$ in  $\Cl^*F[n]$.
\end{prop}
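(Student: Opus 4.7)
The plan is to derive all three statements from three general tools: the naturality property of $\CTP$, the duality identity \eqref{eq:intro_symm}, and the bi-additivity of $\CTP_E$ in the Baer class of $E$ (with fixed outer terms and local conditions). Each claim arises by feeding a morphism already present in the construction of $D(m)$ into one of these tools.

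\emph{Antisymmetry.} The diagram \eqref{eq:our_beg_anti} exhibits a morphism $D(m) \to D(m)^\vee$ with components $(\mathrm{Id},\, \gamma_{n,m},\, -\mathrm{Id})$. Naturality therefore gives $\CTP_{D(m)}(\phi, \psi) = -\CTP_{D(m)^\vee}(\phi, \psi)$, and \eqref{eq:intro_symm} rewrites the right-hand side as $-\CTP_{D(m)}(\psi, \phi)$.

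\emph{Compatibility under divisibility.} For $m = ab$, the inclusions $\mu_a \hookrightarrow \mu_m$ and $\tfrac{1}{a}\Z/\Z \hookrightarrow \tfrac{1}{m}\Z/\Z$ of $a$-torsion subgroups extend to a morphism $D(a) \hookrightarrow D(m)$ in $\SMod_F$, realizing $D(a)$ as the $a$-torsion subsequence of $D(n)$ just as $D(m)$ is the $m$-torsion. Under the natural identification of $\mu_k^\vee$ with $\tfrac{1}{k}\Z/\Z$ supplied by $g$, the dual of $\mu_a \hookrightarrow \mu_m$ is multiplication by $b$, as can be checked directly on the generator $g(1/m)$. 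Naturality then yields $\CTP_{D(a)}(\alpha, b\beta) = \CTP_{D(m)}(\alpha, \beta)$ for $\alpha \in \Cl^*F[a]$ and $\beta \in \Cl^*F[m]$, and chaining this through the antisymmetry of both $\CTP_{D(m)}$ and $\CTP_{D(a)}$ converts it into the claimed $\CTP_{D(m)}(\phi, \psi) = \CTP_{D(a)}(b\phi, \psi)$ for $\phi \in \Cl^*F[m]$ and $\psi \in \Cl^*F[a]$.

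\emph{Explicit formula.} Since $D(n) = E_n - E_n^\vee$ is a Baer difference of two extensions of $\underline{\tfrac{1}{n}\Z/\Z}$ by $\underline{\mu_n}$ with compatible local conditions, bi-additivity of $\CTP$ in the extension class gives $\CTP_{D(n)} = \CTP_{E_n} - \CTP_{E_n^\vee}$. The defining pushout morphism $E(n,n) \to E_n$ has $f_1 = g_n$ and $f_2 = \mathrm{Id}$, so naturality combined with the self-duality of $g_n$ noted after \eqref{eq:g_morphism_SMod} yields $\CTP_{E_n}(\phi, \psi) = \CTP_{E(n,n)}(\phi, g(\psi))$, and the duality identity converts $\CTP_{E_n^\vee}(\phi, \psi) = \CTP_{E_n}(\psi, \phi) = \CTP_{E(n,n)}(\psi, g(\phi))$; subtracting gives the result. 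The main obstacle is this last bi-additivity of $\CTP$ under Baer sum, which is not highlighted in the recap in Section \ref{ssec:recap}: should it not be available as a black box from \cite{MS21}, one verifies it directly from Definition \ref{def:CTP_small} by noting that compatible sums of the global lifts $f$ and $\epsilon$ and the local lifts $\overline{\phi}_{v, M}$ represent the Baer-sum extension and make the local $2$-cocycle $\gamma_v$ additive in these choices.
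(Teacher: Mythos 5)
Your proof is correct and follows essentially the same route as the paper's: naturality plus the duality identity for antisymmetry, naturality of an inclusion $D(a)\hookrightarrow D(m)$ for the divisibility compatibility, and Baer-sum bi-additivity of $\CTP$ together with naturality of the pushout $E(n,n)\to E_n$ and the duality identity for the final formula. Your treatment of the middle claim is in fact slightly more careful than the paper's terse statement, since naturality for the inclusion morphism directly yields $\CTP_{D(a)}(\alpha,b\beta)=\CTP_{D(m)}(\alpha,\beta)$ with the roles and the multiplication-by-$b$ on the "wrong" sides, and you correctly close the gap by invoking the already-proved antisymmetry of both pairings; the bi-additivity under Baer sum you worry about is indeed available as a black box from the Baer-sum results of \cite{MS21}, Section 4.1.
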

\begin{proof}
The antisymmetry of $\CTP_{D(m)}$ follows from the duality identity and from applying naturality to the morphism $D(m) \to D(m)^{\vee}$ given in \eqref{eq:our_beg_anti}.

The relation \eqref{eq:our_beg_2} is the statement of naturality for the morphism $D(a) \to D(m)$ of exact sequences defined by  the natural inclusion at each entry.

The final claim follows from the relation $E_n - E_n^{\vee} = D(n)$ and the duality identity applied to the sequence $E_n^{\vee}$.
\end{proof}

If $\ell$ is odd, this proposition shows that the pairings $\CTP_{D(m)}$ may be used to complete the tuple $(\Cl^*F[\ell^{\infty}], \psi_F)$ to a bilinearly enhanced group.

\begin{rmk} \label{rmk:explicit_M_m_extension}
From the definition of the Baer sum, the underlying $G_F$-module of the central term $M_n$ of $E_n - E_n^{\vee}$ is the subquotient
\[M_n=\frac{\{(x,y)\in  \mu_{n^2}\oplus \frac{1}{n^2}\mathbb{Z}/\mathbb{Z}~~\colon~~ny=g^{-1}(x^n)\}}{\{(x, g^{-1}(x))~~\colon~~x\in \mu_n\}}\]
of  $\mu_{n^2}\oplus \frac{1}{n^2}\mathbb{Z}/\mathbb{Z}$.
The injection $\mu_n \to M_n$ sends $x$ to the class of $(x, 0)$, and the surjection $M_n \to \tfrac{1}{n}\Z/\Z$ sends a class represented by $(x, y)$ to $ny$.

At the level of $G_F$-modules, one can take $\gamma_n \colon M_n \to (M_n)^{\vee}$ to be induced by the map
\[\gamma_n\colon  \mu_{n^2}\oplus \tfrac{1}{n^2}\mathbb{Z}/\mathbb{Z} \longrightarrow   \tfrac{1}{n^2}\mathbb{Z}/\mathbb{Z} \oplus \mu_{n^2}\]
sending $(x,y)$ to $(-y, x )$. This map corresponds to an alternating pairing on $M_n$.
\end{rmk}

In the case $\ell = 2$, we can complement Proposition \ref{prop:our_beg} as follows. 

\begin{prop}
\label{prop:Dm_alt}
Suppose $F$ contains $\mu_n$ for some $n$ a power of  $2$. Then, for any positive divisor $m$ of $n$, the pairing $\textup{CTP}_{D(m)}$ is alternating.  
\end{prop}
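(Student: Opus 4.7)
The plan is to mimic the proof of Proposition \ref{prop:cg_alt_intro}, using the theory of theta groups from \cite[Section 5]{MS21}. The key observation is that the self-duality isomorphism $\gamma_{n,m}\colon (M_n[m], \msW_{n,m}) \to (M_n[m], \msW_{n,m})^\vee$ of Notation \ref{notat:our_beg} comes from the map $(x, y) \mapsto (-y, x)$ on $\mu_{n^2} \oplus \tfrac{1}{n^2}\Z/\Z$ (see Remark \ref{rmk:explicit_M_m_extension}), which realises an \emph{alternating} pairing on $M_n[m]$, not merely an antisymmetric one. Consequently $\gamma_{n,m}$ is of the form $f_{\mcH_m}$ for some theta group $\mcH_m$, and the vanishing criterion of \cite[Theorem 5.10]{MS21} will force $\CTP_{D(m)}(\phi, \phi) = 0$.

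The first step is to construct explicitly a theta group
\[1 \to (F^s)^\times \to \mcH_m \to M_n[m] \to 0\]
with $f_{\mcH_m} = \gamma_{n,m}$. Working from the subquotient model of Remark \ref{rmk:explicit_M_m_extension}, I would define $\mcH_m$ as a Heisenberg-type group on lifts of generators of $M_n[m]$, with the commutator calibrated so that the resulting commutator pairing coincides with the alternating form induced by $(x, y) \mapsto (-y, x)$. The requirement that $f_{\mcH_m}$ equal $\gamma_{n,m}$ (with the correct sign) then reduces to a finite check against the explicit formula from Remark \ref{rmk:explicit_M_m_extension}; the fact that $2 \mid n$ is what allows the alternating enhancement to exist.

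The second step is to verify that $\msW_{n,m}$ is isotropic for $\mcH_m$, exactly as in Lemma \ref{lem:class_isotropy}: for each place $v$ of $F$ and each $\phi_v$ in the $v$-component of $\msW_{n,m}$, the class $q_{G_v}(\phi_v) \in H^2(G_v, (F_v^s)^\times)$ vanishes. At places where the relevant extensions are unramified, this follows from the triviality of the unramified Brauer group; at the remaining (archimedean and $2$-adic) places, one uses the fact that $\msW_{n,m}$ is defined so that local conditions match across the two rows of \eqref{eq:our_beg_anti} together with the hypothesis $\mu_n \subseteq F$ for $n$ a power of $2$. The third step is to verify triviality of the Poonen--Stoll class attached to $D(m)$ and $\mcH_m$, by exhibiting a $G_F$-equivariant set-theoretic section of $\mcH_m \to M_n[m]$ over $\mu_m \hookrightarrow M_n[m]$, in the spirit of Lemma \ref{lem:theta_nogoody}.

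With all three ingredients in place, \cite[Theorem 5.10]{MS21} applied to $D(m)$ and $\mcH_m$ yields
\[\CTP_{D(m)}\bigl(\phi, \, f_{\mcH_m}(\phi)\bigr) = 0 \quad \text{for all } \phi \in \Cl^* F[m].\]
Since $f_{\mcH_m} = \gamma_{n,m}$ is precisely the isomorphism used to identify $\Sel \underline{\mu_m}^\vee$ with $\Cl^* F[m]$ in the formulation of $\CTP_{D(m)}$ as a pairing on $\Cl^* F[m] \otimes \Cl^* F[m]$, this is equivalent to $\CTP_{D(m)}(\phi, \phi) = 0$, which gives the proposition. The main obstacle will be the first step: matching $f_{\mcH_m}$ to $\gamma_{n,m}$ on the nose requires care with signs, since $\gamma_{n,m}$ arises through a Baer-difference construction, and the check that $\msW_{n,m}$ is isotropic will likely demand a careful local analysis at places above $2$, where the theta-group obstruction could a priori be nontrivial.
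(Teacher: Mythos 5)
Your overall strategy is the right one, but the specific route you propose diverges from the paper's and, as proposed, has real gaps. The paper does \emph{not} build a Heisenberg group directly on generators of $M_n[m]$ and match $f_{\mcH_m}$ to $\gamma_{n,m}$ by hand; for $1 < m < n$ it goes one level up, takes the alternating pairing $P_1 \colon M_n[2m]\times M_n[2m] \to \mu_{2m}$ corresponding to $\gamma_{n,2m}$ (using that $2m$ divides $n$), and feeds this into the quotient construction of \cite[Definition 5.17]{MS21} with $e = 1$, producing $\mcH$ over $M_n[m]$ as a quotient $U/N$ with $f_{\mcH} = \gamma_{n,m}$ automatically. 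The $m=n$ case is dispatched separately via the final identity in Proposition \ref{prop:our_beg}, and your uniform plan would have to account for the absence of a level $2n$ inside $M_n$.

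The more serious gap is step two. You propose to check isotropy of $\msW_{n,m}$ by place-by-place local analysis ``exactly as in Lemma \ref{lem:class_isotropy},'' but that lemma works because $\msW_n^0$ has a clean explicit description (unramified away from $2$, trivial at $2$). The local conditions $\msW_{n,m}$ have no such description: they are characterised abstractly as the largest local conditions making the inclusion $(M_n[m],\msW_{n,m}) \hookrightarrow M_n$ strictly monic in $\SMod_F$, arising from the Baer-difference construction. Saying that isotropy at ramified and $2$-adic places follows from ``the fact that $\msW_{n,m}$ is defined so that local conditions match across the two rows of \eqref{eq:our_beg_anti}'' is not an argument, and it is unclear how one would make it one. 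The paper avoids this entirely: once $\mcH$ is built as $U/N$ via Definition 5.17, the observation that the morphism $(M_n[2m],\msW_{n,2m}) \to (M_n[m],\msW_{n,m})$ is strictly epic lets them invoke \cite[Example 5.18]{MS21} and get isotropy for free, with no local computation. This structural argument is tied to the specific $U/N$ construction and would not be available for a hand-built Heisenberg group. The same goes for step three: the paper verifies the Poonen--Stoll class is trivial using that $U$ is set-theoretically $(F^s)^\times \times M_n[2m]$, so the lift of $\mu_m$ is essentially given; your version has to produce an equivariant lift from scratch inside a theta group whose definition you have not pinned down. In short: right tool (theta groups and Theorem 5.10), but the construction needs to be the quotient one from $M_n[2m]$, not a direct Heisenberg group, and the isotropy step needs the structural argument, not local analysis.
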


\begin{proof}
For $m=n$ this follows from the final part of Proposition \ref{prop:our_beg}. Thus we suppose $m<n$. Since the case $m = 1$ is trivial, we may also assume $m > 1$.

Take $P_1: M_n[2m] \times M_n[2m] \to \mu_{2m}$ to be the alternating pairing corresponding to the isomorphism $\gamma_{n, 2m}$. Following \cite[Definition 5.17]{MS21} with this pairing and $e =1$ gives a theta group $\mcH$ over $M_n[m]$ constructed as a quotient $U/N$. In the terminology of \cite[Definition 5.1]{MS21}, the map $f_\mcH$ associated to this theta group equals $\gamma_{n, m}$. The composition of the morphisms
\[M_n\xrightarrow{\,\, n/2m\,\,} \left(M_n[2m], \,\msW_{n, 2m}\right)  \xrightarrow{\,\, \,2\,\,\,}  \left(M_n[m], \,\msW_{n, m}\right)\]
is strictly epic, so the second morphism in this sequence is also strictly epic. By \cite[Example 5.18]{MS21}, we see that $\msW_{n, m}$ is an isotropic subgroup of local conditions for the theta group $\mcH$.

The group $U$ is isomorphic as a $G_F$ set to $(F^s)^{\times} \times M_n[2m]$, and we can check that the map $\zeta \mapsto (1, \zeta)$ defines a $G_F$-equivariant homomorphism from $\mu_{2m}$ to $U$ fitting into the triangle
\[\begin{tikzcd}
& U \arrow{d} \\
\mu_{2m} \arrow{r} \arrow{ur} & M_n[2m].
\end{tikzcd}\]
This map descends to a $G_F$-equivariant lift $\mu_m \to \mcH$ of the map $\mu_m \to M_n[m]$, so we find the Poonen--Stoll class  associated to $D(m)$ and $\mcH$ by \cite[Definition 5.7]{MS21} is $0$. The proposition now follows from \cite[Theorem 5.10]{MS21}. \end{proof}

\section{Decomposing class groups}
\label{ssec:class_decomp}

The Cohen--Lenstra--Martinet heuristics \cite{CoLe84, CoMa90} give a conjectural description for how the class group of an extension of global fields $K/F$ varies as $K$ changes within some family. According to these conjectures, the structure of the semisimple decomposition of the $G_F$-representation $\QQ[G_F/G_K]$ has an impact on how the class group of $K$ behaves. Using Construction \ref{ctn:class_twist} below, the Cassels--Tate pairing seems to be a helpful tool for breaking apart the class group with respect to such a semisimple decomposition. 

In what follows, we will work in the categories $\SMod_{F, \ell}^{\infty}$ and $\SMod_{K, \ell}^{\infty}$ introduced in \cite[Section 4.2]{MS21}, so as to allow the underlying modules to be infinite. As in Remark \ref{rmk:infinite_F_change}, the results on base change proved in Section \ref{sec:fields} remain valid in this setting.

\begin{ctn}
\label{ctn:class_twist}
Choose a separable extension $K/F$ of global fields, and take $\ell$ to be a rational prime other than the characteristic of $F$. Taking $A = \QQ_{\ell}/\Z_{\ell}$, we have an identification
\[\Cl^* K\left[\ell^{\infty}\right] \,\cong\, \Sel_K\left(A, \msW^{\text{ur},\, K}\right) \,\cong\, \Sel_F\, \left(\Ind_{K/F}A,\, \msW^{\text{ur},\, F}\right),\]
where the first isomorphism is as defined Section \ref{ssec:fund_class_groups}, and the second isomorphism is the Shapiro isomorphism of Section \ref{sec:fields}.  Here, $\msW^{\text{ur},\, K}$ and $\msW^{\text{ur},\, F}$ are the subgroups of everywhere unramified local conditions for $A$ and $\Ind_{K/F}A$. They are identified with each other by the local Shapiro isomorphism as in \eqref{eq:KF_unr}.

Choose an isogeny of $G_F$-modules  
\begin{equation}
\label{eq:fake_wedder}
\Ind_{K/F}\, \Z_{\ell} \hookrightarrow \bigoplus_{i  \le r} T_i,
\end{equation}
where $T_1, \dots, T_r$ is a sequence of compact finitely generated free $\Z_{\ell}$-modules with a continuous action of $G_F$. Here by the term isogeny we mean that the map is injective and has finite cokernel.  Equivalently, taking $V_i =\QQ_{\ell} \otimes T_i  $ and $A_i = A \otimes T_i$, the corresponding map
\[\Ind_{K/F} \,\QQ_{\ell} \xrightarrow{\quad\,\,} \bigoplus_{i \le r} V_i\]
is an isomorphism. 

Taking $M$ to be the cokernel of \eqref{eq:fake_wedder}, the snake lemma gives an exact sequence of $G_F$-modules
\[0 \to M \xrightarrow{\,\,\iota\,\,} \Ind_{K/F} A \xrightarrow{\,\,\pi\,\,} \bigoplus_{i \le r} A_i \to  0.\]
We give $\Ind_{K/F} A$ the unramified local conditions and give $M$ and $\bigoplus_{i \le r} A_i$ the local conditions so that this sequence is exact.

From \cite[Proposition 4.11]{MS21}, we have an exact sequence
\begin{equation}
\label{eq:class_decomp_main}
\Sel_F\, M \to \Cl^* K\left[\ell^{\infty}\right] \to \bigoplus_{i \le r} \Sel_F\, A_i \to \Hom\left(\Sel_F M^{\vee},\,\,\QQ/\Z\right),
\end{equation}
with the last map given by the Cassels--Tate pairing.
\end{ctn}

\begin{rmk}
In the above, to calculate the local conditions on $\bigoplus_{i \le r} A_i$  more precisely, we start with the commutative square
\[\begin{tikzcd}
H^1_{\text{ur}}(G_v, \,\Ind_{K/F}\,\QQ_{\ell}) \ar{r}{\sim} \ar{d} & \bigoplus_{i \le r} H^1\left(G_v/I_v, \, V_i^{I_v}\right) \ar{d}{\text{inf}}\\
H^1_{\text{ur}}(G_v, \Ind_{K/F}A)  \ar{r} & \bigoplus_{i \le r} H^1(G_v, \, A_i),
\end{tikzcd}\]
which we have for every place $v$ of $F$. The first vertical map is surjective since $H^1_{\text{ur}}(G_w, \QQ_{\ell})$ surjects onto $H^1_{\text{ur}}(G_w, A)$ for each place $w$ of $K$. So the local conditions for $\bigoplus_{i \le r}  A_i$ are given by
\[\prod_{v\text{ of } F} \bigoplus_{i \le r} \text{inf}\left(H^1(G_v/I_v, V_i^{I_v})\right).\]
\end{rmk}

We will consider Construction \ref{ctn:class_twist} in detail in the case that $K/F$ is cyclic of degree $\ell$. We expect that it would give similarly--detailed information in the case that $K/F$ is any abelian extension.
\begin{ex}
In Construction \ref{ctn:class_twist}, take $K/F$ to be a cyclic extension of degree $\ell$, and   
\[\chi\colon \Gal(K/F) \isoarrow \mu_{\ell}\]
to be a nontrivial character associated to this extension. Take  $T =\Z_{\ell}$ and $T^{\chi} = \Z_{\ell}[\mu_{\ell}]$. We give $T$ the trivial $G_F$-action and define an action on $T^{\chi}$ by
\[\sigma x = \chi(\sigma)\cdot x\quad\text{for all }\,\, \sigma \in G_F.\]
Take $A = \QQ_{\ell}/\Z_{\ell}$ as above, and define $A^{\chi}$ to be $A \otimes T^{\chi}$. We have an exact sequence
\[0 \to \tfrac{1}{\ell}\Z/\Z \xrightarrow{\,\,\iota\,\,} \Ind_{K/F} A \xrightarrow{\,\,(\nu, \, p)\,\,} A \oplus A^{\chi}\to 0\]
of $G_F$-modules, where $\iota$ is given by
\[\iota(a) = \sum_{i =0}^{\ell - 1}\, [\sigma^i] \otimes  a\quad\text{for all }\,\,a \in A,\]
where $\nu$ is given as in Figure \ref{fig:nat_mor}, and where $p$ is given by
\[p([\sigma] \otimes a) = \chi(\sigma)\cdot a\quad\text{for }\,\, a \in A, \,\sigma \in G_F.\]
Giving $\Ind_{K/F}A$ the subgroup of unramified local conditions, the resulting local conditions on $A^{\chi}$ are given by
\[\msW^{\chi} = \prod_{\substack{v \text{ of } F \\ K/F \text{ unram. at } v}} H^1_{\text{ur}}(G_v, A^{\chi}),\]
and the local conditions for $A$ are the unramified ones. The map $\iota$ is the composition  
\[ \tfrac{1}{\ell}\Z/\Z \,\xrightarrow{\,\,i_1\,\,}\, \Ind_{K/F}\, \tfrac{1}{\ell}\Z/\Z \,\hookrightarrow \,\Ind_{K/F}\, \QQ_{\ell}/\Z_{\ell}, \]
with $i_1$ taken as in Figure \ref{fig:nat_mor}. From \eqref{eq:cor_from_sh}, the local conditions for $\tfrac{1}{\ell}\Z/\Z$ are seen to be
\[\msW = \prod_{v\text{ of } F} \ker\Big(H^1\left(G_v,\,\tfrac{1}{\ell}\Z/\Z\right)  \xrightarrow{\,\,\text{res}\,\,} H^1\left(G_K \cap I_v,\,\tfrac{1}{\ell}\Z/\Z\right)\Big).\]
The exact sequence \eqref{eq:class_decomp_main} then takes the form
\[
\Sel\left(\tfrac{1}{\ell}\Z/\Z ,\,\msW\right)\xrightarrow{\quad} \Cl^*K[\ell^{\infty}] \xrightarrow{\,\,(\cores,\,\, p \,\circ\, \shap)\,\,} \Cl^*F[\ell^{\infty}] \oplus \Sel \, A^{\chi} \xrightarrow{\quad}  \Sel\left(\mu_{\ell},\, \msW^{\perp}\right)^*,
\]
the $*$ denoting a Pontryagin dual. 
We note that, for $\sigma$ in $G_F$, $\sigma \colon A^{\chi} \to A^{\chi}$ is a morphism in $\SMod_{F, \ell}^{\infty}$, so $\Sel\, A^{\chi}$ has the structure of a $\Gal(K/F)$-module.

From this exact sequence, we can prove the following proposition.
\begin{prop}
\label{prop:class_decomp_ex}
In the above situation, we have an exact sequence 
\begin{equation}
\label{eq:class_decomp_ex_2}
0 \to \left(\Cl^*K[\ell^{\infty}]\right)^{\Gal(K/F)} \xrightarrow{\quad\,\,} \Cl^*K[\ell^{\infty}] \xrightarrow{\,\,\,p\,\,\,} \Sel \,A^{\chi} \to  \Sel\left(\mu_{\ell},\, \msW^{\perp} \right)^* 
\end{equation}
of $\Gal(K/F)$-modules,  where the leftmost map is the natural inclusion, and where $\Gal(K/F)$ acts trivially on the final term. 

 In particular, denote by $S$ the set of places of $F$ not dividing $l$ where $K/F$ is ramified, and  suppose that the following assumption is satisfied:
 \begin{itemize}
\item[$\left(\boldsymbol\star\right)$] There is no element of $F^{\times}$ outside $(F^{\times})^{\ell}$ that has valuation divisible by $\ell$ at all finite primes and which lies in $(F_v^{\times})^{\ell}$ at every prime $v$ in $S$.
\end{itemize}
Then we have an exact sequence of $\Gal(K/F)$-modules
\[0 \to \left(\Cl^*K[\ell^{\infty}]\right)^{\Gal(K/F)} \xrightarrow{\quad\,\,} \Cl^*K[\ell^{\infty}] \xrightarrow{\,\,\,p\,\,\,} \Sel \,A^{\chi} \to  0.\]
\end{prop}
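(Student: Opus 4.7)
The plan is to derive \eqref{eq:class_decomp_ex_2} from the four-term exact sequence already displayed for this example, namely
\[\Sel\bigl(\tfrac{1}{\ell}\Z/\Z, \msW\bigr) \xrightarrow{\iota_*} \Cl^*K[\ell^\infty] \xrightarrow{(\cores,\, p \circ \shap)} \Cl^*F[\ell^\infty] \oplus \Sel A^\chi \xrightarrow{\,\CTP\,} \Sel(\mu_\ell, \msW^\perp)^*.\]
The heart of the argument is to identify $\ker(p \circ \shap)$ with $(\Cl^*K[\ell^\infty])^{\Gal(K/F)}$. My key observation is that $\ker(p) = \iota(A) = (\Ind_{K/F}A)^{\Gal(K/F)}$ inside $\Ind_{K/F}A$, which follows from $p \circ \iota = (1 + \zeta + \cdots + \zeta^{\ell-1})\cdot \mathrm{id}_A = 0$ together with a $\Z_\ell$-corank count. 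The long exact cohomology sequence for $0 \to A \xrightarrow{\iota} \Ind_{K/F}A \xrightarrow{p} A^\chi \to 0$ then gives $\ker(p_*) = \iota_*(H^1(G_F, A))$. The five-term inflation--restriction sequence for $G_K \triangleleft G_F$, combined with $H^2(\Gal(K/F), A) = 0$ (as $A = \QQ_\ell/\Z_\ell$ is $\ell$-divisible), matches the cardinality of $\iota_*(H^1(G_F, A))$ with that of $H^1(G_K, A)^{\Gal(K/F)}$. Since the $\Gal(K/F)$-equivariance of $\shap$ supplied by \eqref{eq:rho_tau_square} puts $\shap^{-1}(\iota_*(H^1(G_F, A)))$ inside $H^1(G_K, A)^{\Gal(K/F)}$, equality of cardinalities forces equality, and intersecting with the $\Gal(K/F)$-stable Selmer subgroup $\Cl^*K[\ell^\infty]$ yields the claim.

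Exactness at $\Sel A^\chi$ I would then derive by decomposing the terminal Cassels--Tate map as $\CTP = \CTP_F + \CTP_\chi$: for any $\psi \in \Sel A^\chi$, the pair $(0, \psi)$ lies in the image of $(\cores, p\circ\shap)$ iff $\CTP_\chi(\psi) = 0$, giving the full exact sequence \eqref{eq:class_decomp_ex_2}. The $\Gal(K/F)$-equivariance follows from that of $\shap$ together with the $\chi^{-1}$-action on $\Sel A^\chi$ induced by $p$.

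For the short exact sequence under $(\star)$ I would show $\Sel(\mu_\ell, \msW^\perp) = 0$. Applying local Tate duality to $\msW_v = \ker(H^1(G_v, \tfrac{1}{\ell}\Z/\Z) \xrightarrow{\res} H^1(G_K \cap I_v, \tfrac{1}{\ell}\Z/\Z))$: at $v$ unramified in $K/F$ one has $\msW_v = H^1_{\mathrm{ur}}$ and $\msW_v^\perp = \delta_\ell(\mcO_{F_v}^\times)$; at $v \in S$, tame total ramification of $K_w/F_v$ forces $\mu_\ell \subseteq F_v$ and $|\msW_v| = \ell^2 = |H^1(G_v, \tfrac{1}{\ell}\Z/\Z)|$, hence $\msW_v^\perp = 0$; at $v \mid \ell$ with $K/F$ ramified, the inclusion $H^1_{\mathrm{ur}} \subseteq \msW_v$ dualises to $\msW_v^\perp \subseteq \delta_\ell(\mcO_{F_v}^\times)$. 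Identifying $H^1(G_F, \mu_\ell) \cong F^\times/(F^\times)^\ell$ via $\delta_\ell$, $\Sel(\mu_\ell, \msW^\perp)$ lies inside the subgroup whose triviality is exactly asserted by $(\star)$. The main obstacle lies in the first step, particularly the cardinality count for $\ker(\iota_*)$ via the connecting map $H^0(A^\chi) \to H^1(G_F, A)$ and the translation of cohomological $\Gal(K/F)$-equivariance down to the Selmer level.
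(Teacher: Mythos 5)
Your route to the central claim, that $\ker(p)$ on $\Cl^*K[\ell^\infty]$ is the $\Gal(K/F)$-invariant subgroup, is genuinely different from the paper's. The paper constructs the auxiliary sequence $0 \to A^\chi \xrightarrow{f} \Ind_{K/F}A \xrightarrow{\nu} A \to 0$ and observes that $\rho_\sigma - 1$ factors as $f\circ p$; since $f_*$ is injective on $H^1$, this gives $\ker(p_*) = \ker((\rho_\sigma-1)_*)$ directly, with no need for inflation--restriction. Your approach via $0 \to A \xrightarrow{\iota} \Ind_{K/F}A \xrightarrow{p} A^\chi \to 0$ and the identification $\ker(p_*) = \iota_*(H^1(G_F,A))$ is also workable, but the step you phrase as a cardinality count does not make sense: $H^1(G_F,A)$ and $H^1(G_K,A)^{\Gal(K/F)}$ are both infinite, so "equal cardinality plus containment implies equality" fails. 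What you actually want is that restriction surjects onto $H^1(G_K,A)^{\Gal(K/F)}$ because $H^2(\Gal(K/F),A) = A/\ell A = 0$, combined with the identity $\res = \shap^{-1}\circ i_*$ from \eqref{eq:cor_from_sh} so that $\shap^{-1}(\iota_*(H^1(G_F,A))) = \text{image}(\res)$. No cardinalities are needed or helpful.

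Your argument for exactness at $\Sel A^\chi$ has a real gap. You correctly note that $(0,\psi)$ lies in $\ker(\CTP)$ iff $(0,\psi)\in\text{image}(\cores, p\circ\shap)$, but then you need $(0,\psi)\in\text{image}(\cores, p\circ\shap)$ to be equivalent to $\psi\in\text{image}(p\circ\shap)$. The forward direction is trivial, but the reverse requires producing, from an arbitrary $x$ with $p(\shap(x)) = \psi$, some $x'$ with $p(\shap(x')) = \psi$ and $\cores(x') = 0$; equivalently it requires $\cores$ restricted to $\ker(p) = (\Cl^*K[\ell^\infty])^{\Gal(K/F)}$ to surject onto $\cores(\Cl^*K[\ell^\infty])$, which is not established and is not automatic. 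The intended route is instead to apply the Selmer exact sequence \cite[Proposition 4.11]{MS21} directly to the short exact sequence $0\to A\to\Ind_{K/F}A\xrightarrow{p}A^\chi\to 0$ in $\SMod_{F,\ell}^\infty$; exactness at $\Sel A^\chi$ is then part of that output and sidesteps your decomposition of the Cassels--Tate map.

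Your analysis of $(\star)$ is correct and fills in what the paper leaves to the reader: at $v\in S$ the tame norm computation gives $\msW_v = H^1(G_v,\tfrac{1}{\ell}\Z/\Z)$, hence $\msW_v^\perp = 0$; at all other finite $v$ the containment $H^1_{\mathrm{ur}}(G_v,\tfrac{1}{\ell}\Z/\Z)\subseteq\msW_v$ dualizes to $\msW_v^\perp\subseteq\delta_{\ell,v}(\mcO_{F_v}^\times)$; and Hilbert 90 identifies $\Sel(\mu_\ell,\msW^\perp)$ with a subgroup of the group whose triviality is $(\star)$.
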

\begin{proof}
To show \eqref{eq:class_decomp_ex_2} is exact, it suffices to show that the kernel of $p$ in $\Cl^*K[\ell^{\infty}]$ is the portion invariant under $G_F$.  So take $\sigma$ to be a generator of $\Gal(K/F)$, and take $\zeta = \chi(\sigma)$. We have an exact sequence
\[0 \to A^{\chi} \xrightarrow{\,\,f\,\,} \Ind_{K/F} A \xrightarrow{\,\,\nu\,\,} A \to 0\]
in which the homomorphism $f$ is given by 
\[f(a \otimes \zeta^i) \,=\,\left([\sigma^{i+1}] -[\sigma^{i}] \right) \otimes a\quad\text{for}\quad a \in A\,\text{ and }\, 0 \le i < \ell.\]
From the long exact sequence, we see that the map
\begin{equation}
\label{eq:last_injection}
H^1(G_F, A^{\chi}) \xrightarrow{\,\,f\,\,} H^1\left(G_F, \, \Ind_{K/F}A\right)
\end{equation}
is injective. In addition, the map $(\rho_{\sigma} - 1)\colon \Ind_{K/F}A  \to \Ind_{K/F}A $ factors as the composition
\[\Ind_{K/F}A \xrightarrow{\,\,p\,\,} A^{\chi}\xrightarrow{\,\,f\,\,} \Ind_{K/F}A.\]
From these two facts, we see that the kernel of $p$   is precisely the portion of $\Sel\left(\Ind_{K/F}A\right)$ invariant under $\rho_{\sigma}$. The exactness of \eqref{eq:class_decomp_ex_2} follows by \eqref{eq:rho_tau_square}, and the equivariance of this sequence under $\Gal(K/F)$ follows from the commutativity of the diagram
\[\begin{tikzcd}
0 \ar{r} & \tfrac{1}{\ell}\Z/\Z \ar{r}{\iota} \ar{d}{\text{Id}} & \Ind_{K/F} A \ar{r}{(\nu,\, p)} \ar{d}{\rho_{\sigma}} &  A \oplus A^{\chi}\ar{r} \ar{d}{\text{Id} \,\oplus\, \sigma} &0\\
0 \ar{r} & \tfrac{1}{\ell}\Z/\Z \ar{r}{\iota} & \Ind_{K/F} A \ar{r}{(\nu, \,p)}&  A \oplus A^{\chi} \ar{r}  &0.
\end{tikzcd}\]

Finally, if the assumption ($\boldsymbol \star$) is satisfied, we find that $\Sel\, (\mu_{\ell}, \msW^{\perp})$ vanishes by interpreting the local conditions using Hilbert 90. 
 This gives the second exact sequence, and the proposition follows.
\end{proof}
\end{ex}

\section{Two miscellaneous examples}
\label{sec:Two}

\subsection{Involution spin}
\label{ssec:Involution}
The notion of the spin of a prime ideal was introduced in \cite{FIMR13}. Given a number field $K$, a nontrivial field automorphism $\kappa$ of $K$, and a principal ideal $(\alpha) = \mathfrak{a}$ of $K$ coprime to $(\kappa \alpha)$, the spin of $\mathfrak{a}$ can be defined as the quadratic residue symbol
\[\left(\frac{\alpha}{\kappa \mathfrak{a}}\right) \in \{\pm 1\}.\]
For fixed  $\kappa$, these spin symbols seem to be equidistributed as $\mathfrak{a}$ varies through the principal primes of $K$. In most cases, such equidistribution is conditional on other hard results.

One exception  occurs when $\kappa$ is an involution of $K$. In this case, the symbol $\left(\frac{\alpha}{\kappa \mathfrak{a}}\right)$ is found to be determined by the splitting of the prime factors of $\mathfrak{a}$ in a certain extension of $K$. The distribution of these spin symbols is then determined by the Chebotarev density theorem.

We can explain this phenomenon with our work on theta groups, giving an alternative to the trace-based approach appearing in the proof of \cite[Proposition 12.1]{FIMR13}. Our version of this proposition is the following.

\begin{prop}
\label{prop:inv_spin}
Take $K/F$ to be a quadratic extension of number fields, and take $\kappa$ to be a generator of $\Gal(K/F)$.

Choose a nonzero integer $\alpha$ in $K$ so that the ideal $\mathfrak{a}=(\alpha)$ is coprime to $(\kappa \alpha)$. Further, assume that the extension $K(\sqrt{\alpha})/K$ splits at all primes of $K$ where $K/F$ is ramified, and  is unramified at all primes over two where $K/F$ is inert.  

Then we have the identity
\[\left(\frac{\alpha}{\kappa \mathfrak{a}}\right) =1.\]
\end{prop}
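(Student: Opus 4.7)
The plan is to follow the blueprint of the proof of Proposition \ref{prop:cg_alt_intro}: encode the spin symbol as a value of the Cassels--Tate pairing over $K$, transfer the computation to $F = K^{\kappa}$ via Theorem \ref{thm:fields_main}, and then force it to vanish using a Heisenberg-type theta group on $\Ind_{K/F} \tfrac{1}{2}\Z/\Z$.

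First, I set up the local conditions. Starting from the class $\chi = \chi_{2,\alpha} \in H^1(G_K,\mu_2)$, I choose a compact open subgroup $\mathscr W$ of $\resprod_{\mathfrak p} H^1(G_\mathfrak p, \tfrac{1}{2}\Z/\Z)$ (with dual $\mathscr W^{\perp}$ on $\mu_2$) satisfying: (i) $g^{-1}(\chi)$ lies in $\Sel(\underline{\tfrac{1}{2}\Z/\Z},\mathscr W)$; (ii) $\mathscr W$ is stable under $\Gal(K/F)$ so that $\shap_{\textup{loc}}(\mathscr W)$ defines a local condition over $F$; (iii) at each prime of $K$ lying above a prime of $F$ ramified in $K/F$ the component of $\mathscr W$ is the trivial subgroup, which is compatible with (i) by the hypothesis that $K(\sqrt\alpha)/K$ splits at these places. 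I then assemble these into a short exact sequence $E$ in $\SMod_K$ of the same shape as $E(2,2)$ from \eqref{eq:Eab_def} but with these bespoke local conditions. A cochain computation adapted from the proof of Proposition \ref{prop:agrees_with_artin_map} identifies
\[\CTP_E\bigl(g^{-1}(\chi),\,\kappa\chi\bigr) = \left(\tfrac{\alpha}{\kappa \mathfrak a}\right)\]
under the identification $\{\pm 1\} = \tfrac{1}{2}\Z/\Z$.

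Next I pass to $\SMod_F$ by applying $\Ind_{K/F}$ and Theorem \ref{thm:fields_main}, and I equip $\Ind_{K/F}\tfrac{1}{2}\Z/\Z$ with the Heisenberg theta group $\mathcal H(2)$ of Notation \ref{notat:class_theta} (the $n=2$ case, now with our $F$ playing the role of $F_0$ and $K$ playing the role of $F$). By \eqref{eq:theta_pk} its associated self-duality is $t \circ \Ind g \circ \rho_{\kappa}$. I verify isotropy of $\shap_{\textup{loc}}(\mathscr W)$ in the style of Lemma \ref{lem:class_isotropy}: the obstruction $q_{G_v}$ lies in $H^2_{\textup{ur}}(G_v,\mu_2) = 0$ whenever $v$ is unramified in $K/F$; at $v$ ramified in $K/F$ the obstruction vanishes because $\shap_{\textup{loc}}(\mathscr W)$ is trivial there; at dyadic $v$ inert in $K/F$ the second hypothesis lets us take $\mathscr W$ to be the unramified condition, so $q_{G_v}$ again lands in $H^2_{\textup{ur}}$; and at the remaining dyadic and archimedean $v$, which are split in $K/F$, the local Heisenberg pairing on $\Ind_{K/F}\tfrac{1}{2}\Z/\Z|_{G_v} \cong (\tfrac{1}{2}\Z/\Z)^2$ is hyperbolic, so one arranges $\mathscr W_{\mathfrak p} \oplus \mathscr W_{\kappa \mathfrak p}$ to be a $\kappa$-invariant maximal Lagrangian containing the local image of $\chi$. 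Triviality of the Poonen--Stoll class then follows exactly as in Lemma \ref{lem:theta_nogoody} by lifting the generators $x_{0,2}$ and $y_{0,2}$ of $\Ind_{K/F}\tfrac{1}{2}\Z/\Z$ $G_F$-equivariantly to $x^2$ and $y^2$ in $\mathcal H(4)$.

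With isotropy and Poonen--Stoll triviality in hand, \cite[Theorem 5.10]{MS21} gives $\CTP_{\Ind_{K/F}E}(\phi, f_{\mathcal H(2)}(\phi)) = 0$ for every Selmer class $\phi$. Taking $\phi = \shap(g^{-1}(\chi))$ and chasing through \eqref{eq:theta_pk}, naturality of the Shapiro isomorphism, the commutative square \eqref{eq:rho_tau_square}, and Theorem \ref{thm:fields_main}, exactly as in the final display of the proof of Proposition \ref{prop:cg_alt_intro}, yields $\CTP_E(g^{-1}(\chi),\kappa\chi) = 0$. Combined with the identification of the first paragraph this gives the desired identity. The main technical obstacle is the construction of $\mathscr W$ at split dyadic and split archimedean primes, where the hypotheses do not restrict $\chi$ and one must produce a $\kappa$-equivariant Lagrangian containing the local image of $\chi$; a secondary but delicate point is the generalisation of Proposition \ref{prop:agrees_with_artin_map} needed in the first paragraph to handle these ramified local conditions.
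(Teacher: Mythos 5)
The proposal takes a genuinely different route from the paper, and it has a real gap. You try to transport the blueprint of Proposition \ref{prop:cg_alt_intro} verbatim: build a short exact sequence of shape $E(2,2)$ over $K$, push it to $F$ with $\Ind_{K/F}$, equip the middle term $\Ind_{K/F}\tfrac{1}{4}\Z/\Z$ with the Heisenberg theta group $\mathcal H(4)$, and appeal to \cite[Theorem 5.10]{MS21}. The paper instead works directly over $F$ with the exact sequence
\[0 \to \FFF_2 \xrightarrow{\,i\,} \Ind_{K/F}\FFF_2 \xrightarrow{\,\nu\,} \FFF_2 \to 0,\]
so that the object carrying the theta group is the four-element $\mathbb F_2$-module $\Ind_{K/F}\FFF_2$, and the theta group is the dihedral group of order $8$. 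This difference matters.

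The concrete gap is the appeal to $\mathcal H(4)$. For the Heisenberg group $\mathcal H_-(n)$ of Notation \ref{notat:class_theta} to carry a $G_F$-action compatible with the pushout along $\mu_n \hookrightarrow (F^s)^\times$, the $G_F$-action on $\mu_n$ coming from $\mathcal H_-(n)$ (namely: $G_K$ trivial, $\kappa$ inversion) must agree with the natural Galois action on $\mu_n \subset (F^s)^\times$. For $n = 4$ this forces $\mu_4 \subset K$ and $\kappa\zeta_4 = \zeta_4^{-1}$. Neither hypothesis appears in Proposition \ref{prop:inv_spin}, and neither holds for a general quadratic extension of number fields. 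More intrinsically: any $G_F$-equivariant nondegenerate pairing $\Ind_{K/F}\tfrac{1}{4}\Z/\Z \times \Ind_{K/F}\tfrac{1}{4}\Z/\Z \to \mu_4$ has $G_K$ acting trivially on the source, so surjectivity forces $G_K$ to act trivially on $\mu_4$. Your intermediate step therefore cannot be set up over an arbitrary $K$. This is precisely what Proposition \ref{prop:cg_alt_intro} has a roots-of-unity hypothesis for, and why the paper's spin argument avoids $\Z/4$ entirely by putting $\Ind_{K/F}\FFF_2$ in the middle, where the theta group only needs $\mu_2$.

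A second issue is the Poonen--Stoll class. In the paper's setup (Lemma \ref{lem:theta_group_input_spin}) the Poonen--Stoll class is not trivial: it equals the discriminant character $\chi_{F,-\delta}$, because the embedding $\FFF_2 \xrightarrow{\,i\,} \Ind_{K/F}\FFF_2$ sends $1$ to $x_0 + y_0$, whose candidate lift $xy$ in $\mcH_-$ has order $4$, so there is no $G_F$-equivariant homomorphic lift and one must twist by $\sqrt{-1}$. This nontrivial class is then an essential ingredient of the final computation; the paper shows $\CTP_E(\nu\phi,\nu\phi+\chi_{F,-\delta})$ equals the spin, which vanishes by $2$-torsion. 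Your appeal to the strategy of Lemma \ref{lem:theta_nogoody} to conclude triviality of the Poonen--Stoll class is only sound in the $\mathcal H(m^2)$ setting (which, as noted, needs $\mu_4$), and your final display omits any analogue of the $\chi_{F,-\delta}$ correction. Your remaining acknowledged obstacles --- constructing a $\kappa$-invariant Lagrangian containing the local image of $\chi$ at split dyadic and archimedean places, and generalising Proposition \ref{prop:agrees_with_artin_map} --- are further symptoms of the same structural problem: the paper sidesteps all of them by choosing local conditions over $F$ that need not be $\Gal(K/F)$-stable (e.g. $\msW_v$ is the image of $H^1(G_v,\langle x_0\rangle)$ at $v\in S$, which $\kappa$ does not preserve).
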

 
\begin{proof}
We will use the exact sequence
\[E \,=\, \left[0 \to \FFF_2 \xrightarrow{\,\,i \,\,} \Ind_{K/F}\FFF_2 \xrightarrow{\,\,\nu\,\,} \FFF_2 \to 0\right]\]
of $G_F$-modules, where $i$ and $\nu$ are defined as in Figure \ref{fig:nat_mor}. We take $x_0, y_0$ to be a basis for $\Ind_{K/F}\FFF_2$ chosen so $\kappa x_0 = y_0$ and $\kappa y_0 = x_0$.

Take $S$ to be the set of places of $F$ dividing some place where  $K(\sqrt{\alpha})/K$ is ramified. Note that the coprimality condition above implies that $S$ only contains places where $K/F$ splits. We define a local conditions subgroup $\msW = \tresprod{v} \msW_v$ for $\Ind_{K/F} \FFF_2$, with each $\msW_v$ defined as follows:
\begin{itemize}
\item If $K/F$ ramifies at $v$, $\msW_v$ is taken to be zero.
\item If $v$ is in $S$, $\msW_v$ is taken to be the image of $H^1(G_v, \langle x_0 \rangle)$.
\item Otherwise, $\msW_v$ is taken to be $H^1_{\text{ur}}\left(G_v, \,\Ind_{K/F} \FFF_2\right)$.
\end{itemize}

Consider the finite group
\[\mcH_{-} = \big \langle x, y, z \,\big|\,\, [x, z] = [y, z] =  x^2= y^2 =  z^2 = 1, \,\,[x,y]= z\big\rangle,\]
which is isomorphic to the dihedral group of order $8$. Define an action of $\Gal(K/F)$ on this group by
\[\kappa x = y,\quad \kappa y = x, \quad\kappa z = z.\]
Then $\mcH_{-}$   fits in an exact sequence of $G_F$-groups
\[0 \xrightarrow{\quad} \mu_2 \xrightarrow{\,\,-1\, \mapsto \,z\,\,} \mcH_{-} \xrightarrow{\,\,\substack{x \,\mapsto\, x_0 \\ y\, \mapsto\, y_0}\,\,} \Ind_{K/F}\FFF_2 \xrightarrow{\quad} 0.\]
Taking the pushout of this sequence along $\mu_2\hookrightarrow (F^s)^{\times}$ defines a theta group $\mcH$ for $\Ind_{K/F} \FFF_2$ in the sense of  \cite[Definition 5.1]{MS21}.

We now need the following lemma. As in Section \ref{sec:theta_complement_class}, we refer to \cite[Definition 5.3]{MS21} for the definition  of isotropic local conditions, and to \cite[Definition 5.7]{MS21} for the definition of the Poonen--Stoll class.
 
\begin{lem} \label{lem:theta_group_input_spin}
Given the above setup, choose $\delta \in F^{\times}$ so $K = F(\sqrt{\delta})$. Then:
\begin{enumerate}
\item The local conditions $\msW$ are isotropic with respect to the theta group $\mcH$,
\item The Poonen--Stoll class $\psi_{\textup{PS}}$ associated to the exact sequence $E$ and the theta group $\mcH$ is the quadratic character $\chi_{F,\, -\delta} \in H^1(G_F, \mu_2)$ corresponding to the extension $F(\sqrt{-\delta})/F$.
\end{enumerate}
\end{lem}
\begin{proof}
We start by establishing isotropy. For a place $v$, let $q_{\mcH,G_v}$ be as defined in \cite[Notation 5.2]{MS21}. At $v$ in $S$, $\msW_v$ is the image of 
\[H^1\big(G_v, \,\langle x\rangle\big) \subseteq H^1\big(G_v, \,\mcH_{-}\big)\]
in $H^1(G_v, \Ind_{K/F} \FFF_2)$,  so  $q_{\mcH, G_v}(\msW_v) = 0$.  At other places $v$ where $K/F$ is not ramified, $q_{\mcH, G_v}(\msW_v)$ lies in the image of $H^2(G_v/I_v, \mu_2)$, which is a trivial group.  At places $v$ where $K/F$ is ramified, $\msW_v$ is zero, so $q_{\mcH, G_v}(\msW_v) = 0$.  This proves part (1).

For the second part, we note that the map $\iota\colon \FFF_2 \to \Ind_{K/F} \FFF_2$ does not lift to a homomorphism from $\FFF_2$ to $\mcH_{-}$. However, we can define a lift of $\iota$ from $\FFF_2$ to $\mcH$ by the formula
\[1 \mapsto \sqrt{-1} \cdot x\cdot y.\]
We may use this lift to compute the Poonen--Stoll class as in \cite[Remark 5.8]{MS21}, giving the part.
\end{proof}

We return to the proof of Proposition \ref{prop:inv_spin}. With the lemma proved, \cite[Theorem 5.10]{MS21} gives
\begin{equation}
\label{eq:spin_theta}
\CTP_E(\phi_0, \,\phi_0) \,=\, \CTP_E(\phi_0, \chi_{F, \, -\delta}),
\end{equation}
for all $\phi_0$ in  $\Sel(\FFF_2, \,\pi(\msW))$, where we have identified the groups $\FFF_2$ and $\mu_2$ via the unique isomorphism between them.

For $\beta$ in $K^{\times}$, we will use the notation $\chi_{\beta}$ for the quadratic character associated to $K(\sqrt{\beta})/K$. Given $\alpha$ as in the proposition statement, take $\phi \in H^1(G_F, \Ind_{K/F}\FFF_2)$ to be the image of $\chi_{\alpha}$ under the Shapiro isomorphism.  From the conditions on $\alpha$ we see that $\phi$ maps into $\msW_v$ for all places $v$ outside $S$. Because of this, $\nu(\phi)$ lies in $\Sel(\FFF_2, \pi(\msW))$.

For $v \in S$, the restriction of $\phi$ to $G_v$ is given by the restriction of $x_0 \cdot \chi_{\alpha} + y_0 \cdot \chi_{\kappa \alpha}$ to $G_v$. Modulo $\msW_v$, this equals $\res_{G_v}\left(i(1) \cdot \chi_{\kappa \alpha}\right)$. We calculate
\begin{align*}
\CTP_E\left(\nu(\phi),\, \nu(\phi) + \chi_{F, \,-\delta}\right) \,&=\, \sum_{v \in S} \inv_v \circ \res_{G_v}\left(\chi_{\kappa \alpha}\cup \chi_{-\alpha  \cdot \kappa \alpha  \cdot \delta} \right)\\
&= \,\sum_{v \in S} \inv_v \circ \res_{G_v}\left(\chi_{\kappa \alpha}\cup \chi_{\alpha \cdot \delta} \right) \\
&=  \,\sum_{v \in S} \inv_v \circ \res_{G_v}\left(\chi_{\kappa \alpha}\cup \chi_{\alpha} \right)\quad\text{since }\sqrt{\delta} \in K\\
&=\, \frac{1}{2\pi \sqrt{-1}} \cdot \log \left(\frac{\alpha}{\kappa \mathfrak{a}}\right).
\end{align*}
The result follows from \eqref{eq:spin_theta}.
\end{proof}

\subsection{Kramer's everywhere-local/global norm group} 
\label{ssec:Kramer} 
Combining our results on field change with the results on Baer sums detailed in  \cite[Section 4.1]{MS21}, we   can  reprove a result of Kramer \cite[Theorem 2]{MR597871}. We go through this now.

We assume that  $F$ has characteristic other than $2$, take a quadratic extension $K/F$, and take $\chi$ to be the quadratic character  $\textup{Gal}(K/F)\isoarrow \{\pm 1\}$ associated to this extension.

\begin{notat}
Choose an abelian variety $A/F$. We  write $A^{\chi}$ for the quadratic twist of $A$ by $\chi$. By definition this is another abelian variety over $F$ equipped with an $K$-isomorphism
\[\psi\colon A_K\isoarrow A_K^\chi\]
 such that for any $\sigma$ in $G_F$, the composition $\psi^{-1}{}\psi^\sigma$ is multiplication by $\chi(\sigma)$ on $A$.

 Note that $\psi$ induces an isomorphism
\begin{equation*} \label{eq:2_tors_iso_twist}
\psi \colon A[2]\isoarrow A^\chi[2]
\end{equation*}
of group schemes over $F$. Via this isomorphism, we can view both  $\Sel^{2}\, A$ and $\Sel^{2}\, A^\chi$ as subgroups of $H^1(G_{F},A[2])$. 

Given a positive integer $n$, we view $A[n]$ as an element of $\SMod_F$ by equipping it with the local conditions used to define the $n$-Selmer group of $A$ over $F$, and view $A^\chi[n]$ as an element of  $\SMod_F$ similarly. We caution that although $A[2]$ and $A^\chi[2]$ are isomorphic as $G_F$-modules, they need not be isomorphic as objects in $\SMod_F$. 
With this set, the exact sequences
\[E = \Big[0 \to A[2] \xrightarrow{\,\,} A[4] \xrightarrow{\cdot 2} A[2] \to 0\Big]\]
and
\[E^{\chi} \,=\,\Big[0 \to  A^{\chi}[2] \xrightarrow{\,\,}  A^\chi[4]  \xrightarrow{\cdot 2}  A^{\chi}[2]  \to 0\Big]\]
in $\SMod_F$  give rise to the specialization of the classical Cassels--Tate pairing to the $2$-Selmer groups of $A$ and $A^{\chi}$, respectively.

\end{notat}
We are then interested in the pairing
\begin{equation}
\label{eq:kramers_pairing}
\CTP_E+\CTP_{E^\chi}\,\colon \,\Sel^{2}\, A\cap \Sel^{2}\, A^\chi\,\,\times\,\, \Sel^{2}\,A^{\vee}\cap \Sel^{2}\, (A^\chi)^{\vee} ~\to~ \mathbb{Q}/\mathbb{Z}.
\end{equation}
We will make use of the exact sequence
\[0 \to A[2] \xrightarrow{\,\,(\text{Id},\, \psi)\,\,} A \times A^{\chi} \xrightarrow{\quad} R_{K/F} A_K \to 0 \]
of group schemes over $F$, where $R_{K/F}A_K$ is the restriction of scalars of $A_K$ to $F$.  As in Example \ref{ex:res_of_scalars}, the $G_F$-module of $F^s$-points of $R_{K/F}A_K$ is $\Ind_{K/F} A(F^s)$. The final map in the above sequence is given on $F^s$-points by
\[(a, b) \,\xmapsto{\quad}\, [1] \otimes\left(a + \psi^{-1}(b)\right)\,+\, [\sigma] \otimes\sigma^{-1}\left (a- \psi^{-1}(b)\right),\]
where $\sigma$ is any element of $G_F$ outside $G_K$.

Take $M$ to be the   subgroup of $R_{K/F}A_K$ in the image of $(A \times A^{\chi})[4]$, and take 
\[\varphi\colon R_{K/F}A_K \to A \times A^{\chi}\]
 to be the associated isogeny. Equipping  $R_{K/F}A_K[\varphi]$ with the usual local conditions defining the $\varphi$-Selmer group, the sequence
\begin{equation}
\label{eq:po_kramer}
0 \to \left(A[2], \,\msW_2 + \msW_2^{\chi}\right) \xrightarrow{\,\,i\,\,}  R_{K/F}A_K[\varphi] \to A[2]\oplus A^\chi[2]  \to 0
\end{equation}
is then the pushout of $E \oplus E^{\chi}$ along the map 
\[A[2] \oplus A^{\chi}[2] \xrightarrow{\,\,(1, \psi^{-1}) \,\,} A[2].\]
Here, $i$ is defined as in Figure \ref{fig:nat_mor}, and $\msW_2$ and $\msW_2^\chi$  are the standard local conditions for $A[2]$ and $A^\chi[2]$ respectively. We then obtain by pullback an exact sequence
\begin{equation}
\label{eq:pb_kramer}
0 \to \left(A[2], \,\msW_2 + \msW_2^{\chi}\right) \xrightarrow{\,\,i\,\,}  R_{K/F}A_K[2] \xrightarrow{\,\,\nu\,\,} \left(A[2], \,\msW_2 \cap \msW^{\chi}_2\right)\to 0
\end{equation}
in $\SMod_F$,   where $R_{K/F}A_K[2]$ carries its usual local conditions, and where $\nu$ is defined as in Figure \ref{fig:nat_mor}.

The Cassels--Tate pairing of \eqref{eq:pb_kramer} equals the pairing \eqref{eq:kramers_pairing} by naturality. Applying the functor $\Sel_F$ to the sequence \eqref{eq:pb_kramer} gives the exact sequence
\[\Sel_F\left(A[2], \,\msW_2 + \msW_2^{\chi}\right) \xrightarrow{\,\, \text{res}_{G_K}^{G_F}\,\,}\Sel^2 A_K \xrightarrow{\,\, \cores^{G_K}_{G_F}\,\,} \Sel_F\left(A[2], \,\msW_2 \cap \msW_2^{\chi}\right)\]
by Shapiro's lemma and \eqref{eq:cor_from_sh}.

We are thus left with the following:
\begin{thm}[\cite{MR597871}, Theorem 2] \label{thm:kramer}
The pairing \eqref{eq:kramers_pairing} has left and right kernels
\[\textup{cor}^{G_{K}}_{G_F}~\Sel^{2}\, A_{K}\quad \quad\textup{ and }\quad\quad \textup{cor}^{G_{K}}_{G_F}~\Sel^{2}\, A^{\vee}_K\]
respectively.
\end{thm}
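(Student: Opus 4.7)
The plan is to recognize the pairing \eqref{eq:kramers_pairing} as the Cassels--Tate pairing of the short exact sequence \eqref{eq:pb_kramer}, and then to read off its left and right kernels from the general kernel description of the Cassels--Tate pairing together with the Shapiro isomorphism and the induced-module formalism of Section \ref{sec:fields}.

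For the first reduction, the sequence \eqref{eq:pb_kramer} is by construction a pullback of the pushout of $E \oplus E^{\chi}$, i.e.\ the Baer sum of $E$ and $E^{\chi}$ under the $G_F$-identification $\psi\colon A^{\chi}[2] \isoarrow A[2]$ (note that $\psi$ is $G_F$-equivariant on $2$-torsion because $\chi$ takes values in $\{\pm 1\}$). By the Baer sum additivity of the Cassels--Tate pairing established in \cite[Section 4.1]{MS21}, combined with naturality applied to the morphisms $(1,\psi^{-1})$ and $(1,\psi)$ that define the pushout and pullback, one obtains
\[\CTP_{\eqref{eq:pb_kramer}}(\phi,\psi) \,=\, \CTP_E(\phi,\psi)\,+\,\CTP_{E^{\chi}}(\phi,\psi)\]
for all $\phi \in \Sel^2 A \cap \Sel^2 A^{\chi}$ and $\psi \in \Sel^2 A^{\vee} \cap \Sel^2 (A^{\chi})^{\vee}$.

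Next, by the structure theorem for the Cassels--Tate pairing recalled in Section \ref{ssec:recap}, the left kernel of $\CTP_{\eqref{eq:pb_kramer}}$ equals $\nu\bigl(\Sel_F R_{K/F}A_K[2]\bigr)$ and the right kernel equals $i^{\vee}\bigl(\Sel_F (R_{K/F}A_K[2])^{\vee}\bigr)$. The identification $R_{K/F}A_K[2] = \Ind_{K/F}A[2]$ in $\SMod_F$ (as in Example \ref{ex:res_of_scalars}) together with the global Shapiro isomorphism gives $\Sel_F R_{K/F}A_K[2] \cong \Sel^2 A_K$, and the square \eqref{eq:cor_from_sh} shows that $\nu \circ \shap = \cores^{G_K}_{G_F}$. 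Hence the left kernel is $\cores^{G_K}_{G_F}\Sel^2 A_K$, as desired.

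For the right kernel, apply Proposition \ref{prop:duality_functor} to identify $(R_{K/F}A_K[2])^{\vee} = (\Ind_{K/F}A[2])^{\vee}$ with $\Ind_{K/F}A^{\vee}[2] = R_{K/F}A^{\vee}_K[2]$ via the map $t$ of Definition \ref{def:t_iso}; under this identification $i^{\vee}$ corresponds to $\nu$, since the maps $i_1$ and $\nu_1$ of Figure \ref{fig:nat_mor} are interchanged by $t$. Shapiro then gives $\Sel_F R_{K/F}A^{\vee}_K[2] \cong \Sel^2 A^{\vee}_K$, and a second application of \eqref{eq:cor_from_sh} identifies the right kernel with $\cores^{G_K}_{G_F}\Sel^2 A^{\vee}_K$ inside $\Sel_F\bigl(A^{\vee}[2],\, \msW_2^{\perp}\cap \msW_2^{\chi,\perp}\bigr) = \Sel^2 A^{\vee}\cap\Sel^2 (A^{\chi})^{\vee}$. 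The main obstacle is the first step: carefully confirming that the pushout-then-pullback construction produces a Cassels--Tate pairing that is exactly $\CTP_E + \CTP_{E^{\chi}}$ rather than differing by a sign or twist. Once the Baer sum additivity is in hand, the remainder is a formal manipulation of induced modules, their duals, and the Shapiro isomorphism.
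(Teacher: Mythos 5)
Your proposal reproduces the paper's own argument: the paper identifies the pairing \eqref{eq:kramers_pairing} with $\CTP$ of \eqref{eq:pb_kramer} via naturality and the Baer-sum formalism of \cite[Section 4.1]{MS21}, and then reads the left kernel off as $\nu(\Sel_F R_{K/F}A_K[2]) = \cores^{G_K}_{G_F}\Sel^2 A_K$ using Shapiro's lemma and \eqref{eq:cor_from_sh}, with the right kernel handled dually via Proposition \ref{prop:duality_functor} and Example \ref{ex:res_of_scalars}, exactly as you outline. One small slip in articulation: in the right-kernel step you refer to $i_1$ and $\nu_1$, but the maps actually appearing in \eqref{eq:pb_kramer} are $i$ and $\nu$ from Figure \ref{fig:nat_mor} (the natural morphisms at the level of $G_F$-modules, not $G_K$-modules), and the precise identity you need is $i^\vee \circ t = \nu$ rather than an "interchange" — though the computation behind this is straightforward and your conclusion is correct.
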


\begin{rmk}
In the case where $A$ is an elliptic curve, $A$ is canonically isomorphic to its dual. Thus Theorem \ref{thm:kramer} yields a non-degenerate pairing on the group
\begin{equation} \label{eq:alternating_kramer}
\left(\Sel^{2}\, A\cap \Sel^{2}\, A^\chi\right)/\textup{cor}^{G_{K}}_{G_F}~\Sel^{2}\, A_{K}.\end{equation}
This pairing is alternating since the Cassels--Tate pairings for $A$ and $A^{\chi}$ are both alternating. It follows that the group \eqref{eq:alternating_kramer} has even $\mathbb{F}_2$-dimension. This is used by Kramer to give a purely local formula for the parity of the $2$-infinity Selmer rank of $A$ over $K$ \cite[Theorem 1]{MR597871}. 

If instead $A$ is taken to be an arbitrary principally polarized abelian variety, the corresponding pairing on the group \eqref{eq:alternating_kramer} is only antisymmetric in general. Nevertheless, it is possible to use the work of Poonen--Stoll \cite{Poon99} to analyze the failure of this pairing to be alternating, allowing one to once again decompose the parity of the $2$-infinity Selmer rank of $A$ over $K$ as a sum of local terms. This is carried out in work of the first author \cite[Theorem 10.20]{MR3951582}, where the relative simplicity of \eqref{eq:pb_kramer} compared to either $E$ or $E^{\chi}$ plays an essential role.

 This simplification of the Cassels--Tate pairing under quadratic twist was also observed independently by the second author, and plays a key role in the work  \cite{Smith16b} concerning the distribution of $4$-Selmer groups in quadratic twist families. In hindsight, the underlying reason behind this simplification is that, due to \eqref{eq:pb_kramer},  the sum of the Cassels--Tate pairings $\CTP_E + \CTP_{E^{\chi}}$ can be simpler to control than either $\CTP_E$ or $\CTP_{E^{\chi}}$.
\end{rmk}

\bibliography{references}{}
\bibliographystyle{amsplain}

\newpage 
\appendix
\section{Group change operations} \label{app:group_change}
 
 This appendix reviews the construction and basic properties of group change operations for the cohomology of profinite groups. Much of the material is standard but we include it here for lack of a suitable reference (though see \cite{Hara20, Neuk08}).  We refer to \cite[ Notation 2.1]{MS21} for the notation concerning inhomogeneous cochains and cup products used below.

\subsection{Setup} \label{sec:setup_appendix}
Take $G$ to be a profinite group, and take $\text{Mod}_G$ to be the category of discrete $G$-modules. This is an abelian category with enough injectives \cite[(2.6.5)]{Neuk08}. Given $M$ in this category, the groups $H^n(G, M)$ may be defined as the right derived functors of the $G$-invariants functor  $H^0(G, \text{--})$.  The conversion between this definition and the inhomogeneous cochain definition proceeds as  follows.

Given a nonnegative integer $n$, we take $\mathscr{C}^n(G, M)$ to be the set of continuous maps from $G^{n+1}$ to $M$. These form the homogenous cochain resolution
\begin{equation*}
0 \to M \xrightarrow{\quad} \mathscr{C}^0(G, M) \xrightarrow{\,\,d\,\,}\mathscr{C}^1(G, M) \xrightarrow{\,\,d\,\,} \mathscr{C}^2(G, N) \to \dots
\end{equation*}
in $\text{Mod}_G$ detailed in \cite[I.2]{Neuk08}. The map $e$ from $C^n(G, M)$ to $\mathscr{C}^n(G, M)$ defined by
\[e(\phi)(\sigma_0, \dots, \sigma_n) = \sigma_0 \phi(\sigma_0^{-1}\sigma_1,\, \sigma_1^{-1}\sigma_2, \,\dots, \,\sigma_{n-1}^{-1}\sigma_n) \]
gives an isomorphism from $C^n(G, M)$ to $H^0(G,\mathscr{C}^n(G, M))$ and the resulting diagram
\[\begin{tikzcd}
0 \arrow{r} &C^0(G, M)\arrow{d}{\sim} \arrow{r}{d} &C^1(G, M)\arrow{d}{\sim} \arrow{r}{d} &\dots\\
0 \arrow{r} & H^0(G, \mathscr{C}^0(G, M)) \arrow{r}{d} & H^0(G, \mathscr{C}^1(G, M)) \arrow{r}{d} & \dots 
\end{tikzcd}\]
commutes. The modules $\mathscr{C}^n(G, M) $ are $H^0(G, \text{--})$-acyclic, so taking cohomology of this diagram yields the sought identification between  $H^n(G, M)$ and $R^nH^0(G, M)$.

\subsection{The basic operations} \label{sec:basic_operations}
The following gives a convenient framework for understanding certain cohomological operators on $\text{Mod}_G$.
\begin{notat}
\label{defn:cohom_op_recipe}
Take $G_0$, $G_1$, and $G_2$ to be closed subgroups of $G$. Suppose we have chosen exact functors
\[F_1 \colon \Mod_{G_0} \to \Mod_{G_1} \quad\text{and}\quad F_2\colon\Mod_{G_0} \to \Mod_{G_2} \]
that send injective modules to injective modules. Given a natural homomorphism
\[\eta\, \colon H^0(G_1, \text{--}) \circ F_1 \natmor H^0(G_2, \text{--}) \circ F_2,\]
we define the map 
\[\eta\colon H^n(G_1, F_1(M)) \to H^n(G_2, F_2(M))\] for each $M$ in $\Mod_{G_0}$, and each $n \ge 0$, by choosing an injective resolution $0 \to M \to I^{\bullet}$ of $M$ and taking the cohomology of the  diagram
\[\begin{tikzcd}
0 \arrow{r} &H^0(G_1, F_1(I^0)) \arrow{d}{\eta_{I^0}} \arrow{r} &H^0(G_1, F_1(I^1)) \arrow{d}{\eta_{I^1}} \arrow{r} &H^0(G_1, F_1(I^2)) \arrow{d}{\eta_{I^2}} \arrow{r}&\dots\\
0 \arrow{r} & H^0(G_2, F_2(I^0)) \arrow{r}  &  H^0(G_2, F_2(I^1)) \arrow{r} &  H^0(G_2, F_2(I^2)) \arrow{r} & \dots.
\end{tikzcd}\]
\end{notat}
\begin{defn}
\label{defn:exact_gc_functors}
There are three basic exact functors we will consider. First, given an open subgroup $H$ of $G$  we have the induction functor
\[\text{Ind}_G^H \colon \Mod_H \to \Mod_G\]
taking $M$ to $\Z[G] \otimes_{\Z[H]} M$.

For $H$  a closed subgroup of $G$, we have the forgetful functor
\[\text{Res}_H^G \colon \Mod_G \to \Mod_H.\]
Given a discrete $G$-module $M$, we will almost always write $\text{Res}_H^G(M)$ as $M$.

Finally, for $H$ a closed subgroup and  $\tau$ in $G$, we have the conjugation functor
\[\text{Conj}_{\tau}^H \colon \Mod_H \to \Mod_{\tau H \tau^{-1}}.\]
For each $M$ in $\text{Mod}_H$, $\text{Conj}_{\tau}^H(M)$ is defined to be $M$ as an abelian group; given $m$ in $M$  we write $\tau \cdot m$ for the corresponding element of $\text{Conj}_{\tau}^H(M)$. The $\tau H \tau^{-1}$-action is  given by
\[\sigma(\tau \cdot m) = \tau \cdot (\tau^{-1}\sigma\tau m).\]
We will usually use the notation $\tau M$ to refer to $\text{Conj}_{\tau}^H (M)$.
\end{defn}
\begin{prop}
The three functors of Definition \ref{defn:exact_gc_functors} send injectives to injectives.
\end{prop}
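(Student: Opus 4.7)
The plan is to show, for each of the three functors, that it is right adjoint to an exact functor; this suffices by the standard categorical principle that such a right adjoint preserves injectives.

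The conjugation functor is the easiest case. The identification $m \leftrightarrow \tau\cdot m$ exhibits $\text{Conj}_\tau^H$ and $\text{Conj}_{\tau^{-1}}^{\tau H\tau^{-1}}$ as mutually inverse functors, so $\text{Conj}_\tau^H$ is an equivalence of categories and therefore preserves injectivity for formal reasons; it is both its own left and right adjoint up to natural isomorphism, and it is trivially exact.

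For $\text{Ind}_G^H$ with $H$ open, the key point is that $[G:H]$ is finite (since $G$ is profinite and $H$ is open), so $\Z[G]$ is free of finite rank as a right $\Z[H]$-module with basis given by any set of coset representatives. The natural comparison map from induction to coinduction,
\[
\text{Ind}_G^H M \,=\, \Z[G]\otimes_{\Z[H]} M \,\isoarrow\, \Hom_{\Z[H]}(\Z[G], M) \,=\, \text{Coind}_G^H M,
\]
is then an isomorphism. By Frobenius reciprocity, $\text{Coind}_G^H$ is the right adjoint to the tautologically exact functor $\text{Res}_H^G$, so $\text{Ind}_G^H\cong\text{Coind}_G^H$ preserves injectives.

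For $\text{Res}_H^G$ with $H$ merely closed, one must construct an exact left adjoint in the discrete setting. Writing $H=\bigcap_U U$ as the intersection of its open overgroups in $G$, the plan is to define the left adjoint $L:\Mod_H\to\Mod_G$ as a filtered colimit built from the finite-index induction functors $\text{Ind}_G^U$ applied to appropriate finite-level pieces of $M$, taking advantage of the fact that any discrete $H$-module is the colimit of its $V$-fixed submodules as $V$ ranges over open subgroups of $H$. Exactness of $L$ will then follow from exactness of each $\text{Ind}_G^U$ (established above) together with exactness of filtered colimits in $\Mod_G$, and the adjunction $L\dashv\text{Res}_H^G$ will follow from Frobenius reciprocity at each open level combined with the universal property of the colimit. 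The main obstacle lies here: writing down the colimit formula explicitly, verifying that the result really is a discrete $G$-module rather than merely an abstract $G$-module, and checking that $L$ genuinely represents the left adjoint. This is routine but requires attention to the discreteness hypothesis, which is precisely what makes the profinite setting delicate compared to the case of abstract groups.
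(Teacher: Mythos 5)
Your treatment of conjugation and of $\Ind_G^H$ for $H$ open matches the paper: conjugation is an equivalence of categories, and your identification $\Ind_G^H\cong\text{Coind}_G^H$ in the finite-index case is exactly the observation the paper phrases as ``$\Ind_G^H$ is an exact left and right adjoint for the exact functor $\text{Res}_H^G$.'' Both you and the paper correctly invoke the principle that a right adjoint to an exact functor preserves injectives; the paper also extracts from that same double adjunction the fact that $\text{Res}_H^G$ preserves injectives when $H$ is open.

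There is, however, a genuine gap in your third case, $\text{Res}_H^G$ for $H$ closed but not open. You correctly note that the naive left adjoint $\Z[G]\otimes_{\Z[H]}(-)$ is unavailable because it fails to produce discrete $G$-modules when $H$ has infinite index, but the colimit replacement you propose is not well-defined as stated. For $U$ an open overgroup of $H$, the functor $\Ind_G^U$ takes a discrete $U$-module as input; a discrete $H$-module $M$ is not a $U$-module, and its ``finite-level pieces'' $M^V$ (for $V$ open in $H$) are only $H/V$-modules, not modules over any open subgroup of $G$. Promoting them to $U$-modules before inducing runs into the same discreteness failure one level down, so ``$\Ind_G^U$ applied to appropriate finite-level pieces of $M$'' has no clear meaning. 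You explicitly flag this as a ``main obstacle'' and leave it unresolved, so what you have is a sketch rather than a proof. The paper does not attempt to build a left adjoint here; it simply cites \cite[Proposition 4.25]{Hara20} for the closed case. It is also not clear that $\text{Res}_H^G$ even admits a left adjoint on discrete modules (it need not preserve infinite products in the discrete category), so the adjoint-functor framing is likely the wrong one for this case, and either a direct extension argument or the cited reference is needed to close the gap.
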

\begin{proof}
Conjugation is an equivalence of categories, so preserves  injectives. For $H$  open in $G$, $\Ind_G^H$ is an exact   left and right adjoint for the exact functor $\text{Res}_H^G$. So both functors preserve injectives. For   $\text{Res}_H^G$ when $H$ is not open, see \cite[Proposition 4.25]{Hara20}.
\end{proof}

In Figure \ref{fig:group_change}  we give four cohomological group change operations that can be defined using the setup of Notation \ref{defn:cohom_op_recipe}.  They give, respectively, the operations of restriction, conjugation, corestriction, and the isomorphism appearing in Shapiro's lemma.
We will also need some additional natural homomorphisms, which we give in Figure \ref{fig:nat_mor}. By functoriality of cohomology (or, equivalently, via the setup of Notation \ref{defn:cohom_op_recipe}) these morphisms induce operations on cohomology. 

\begin{notat}
Throughout Figures $1$ and $2$, $H$ denotes a subgroup of $G$, and $\tau$ denotes an element of $G$. Natural isomorphisms are marked with the notation $\natiso$. When using these objects and functors  we often suppress either $G$ or $H$ or both from the notation. By an abuse of notation we will use the lone symbol $\tau$ to refer to the functor $\Conj_{\tau}^H$, the natural isomorphism $\text{conj}_{\tau}^H$, and the natural isomorphism
\[c_{\tau}^{G/H} \circ \text{conj}_{\tau}^H \,\, \colon\,\, H^0\left(H, \text{Res}_H^G(\text{--})\right) \natiso  H^0\left(\tau H \tau^{-1}, \text{Res}_{\tau H \tau^{-1}}^G(\text{--})\right) .\]
\end{notat}

\begin{center}
\begin{figure}

 \begin{tabular}{l | l| l| l} 
      Notation & Condition & Source functor $\natmor$ Target functor & Definition   \\
      \hline 
      $\res_H^G$ & $H$ closed& $H^0(G, \text{--})  \natmor H^0\left(H, \,\text{Res}_H^G(\text{--})\right)$ & $m \mapsto m$\\
      $\text{conj}_{\tau}^H$ & $H$ closed  & $H^0(H, \text{--}) \natiso H^0\left(\tau H \tau^{-1}, \,\text{Conj}^H_{\tau}(\text{--})\right)$ & $m \mapsto \tau \cdot m$\\
      $\cores_G^H$ & $H$ open & $H^0\left(H, \,\text{Res}_H^G(\text{--})\right) \natmor H^0(G, \text{--})$ & $m \mapsto \sum_{\sigma \in G/H} \sigma m$\\
      $\shap_G^H$ & $H$ open& $H^0(H, \text{--}) \natiso H^0\left(G, \,\text{Ind}_G^H (\text{--})\right)$ & $m \mapsto \sum_{\sigma \in G/H} [\sigma] \otimes m$
      \end{tabular}

\caption{Four group change homomorphisms}
\label{fig:group_change}
\end{figure}
\begin{figure}

\begin{tabular} {l | l| l| l}
      Notation & Condition & Source functor $\natmor$ Target functor & Definition \\
      \hline
      $c^{G/H}_{\tau}$ & $H$ closed & $\text{Conj}_{\tau}^H \circ \text{Res}_{H}^G  \natiso \text{Res}_{\tau H \tau^{-1}}^G $ & $\tau \cdot m \mapsto \tau m$\\
      $\rho^{G/H}_{\tau}$ & $H$ open & $\text{Ind}_G^H \natiso \text{Ind}_G^{\tau H\tau^{-1}} \circ  \text{Conj}_{\tau}^H $ & $[\sigma] \otimes m \mapsto [\sigma\tau^{-1}] \otimes \tau \cdot m$\\[3pt]
      $i_1^{G/H}$ &   $H$ open  & $\text{Id}_{\Mod_H} \natmor \text{Res}^G_H \circ \text{Ind}_G^H$  & $m \mapsto [1] \otimes m$\\
      $i^{G/H}$ &   $H$ open & $\text{Id}_{\Mod_G} \natmor \text{Ind}_G^H\circ \text{Res}^G_H$  & $m \mapsto \sum_{\sigma \in G/H} [\sigma] \otimes \sigma^{-1} m$\\
      $\nu_1^{G/H}$ &   $H$ open & $\text{Res}^G_H \circ \text{Ind}_G^H \natmor \text{Id}_{\Mod_H}$  & $[\sigma] \otimes m \mapsto \begin{cases} \sigma m &\text{if } \sigma \in H \\ 0 &\text{otherwise.} \end{cases}$\\
      $\nu^{G/H}$ &   $H$ open & $ \text{Ind}_G^H\circ \text{Res}^G_H \natmor \text{Id}_{\Mod_G}$  & $[\sigma] \otimes m \mapsto \sigma m$\\

     \end{tabular}
\caption{Some useful natural homomorphisms}
\label{fig:nat_mor}
\end{figure}
\end{center}

Identities between the natural morphisms above automatically hold  on cohomology. Thus, for an  open subgroup $H$ of $G$, and  discrete $H$-module $M$, the diagrams 
\begin{equation}
\label{eq:sh_dec}
\begin{tikzcd}[column sep = scriptsize]
H^n(H, M) \arrow[dr, "\shap" ']\arrow[r, "i_1"] &H^n(H, \Ind_G^H M) \arrow[d, "\cores"]\\
 & H^n(G,  \Ind_G^H M)  
\end{tikzcd}\, \text{ and } 
\begin{tikzcd}[column sep = scriptsize]
H^n(H, M) &H^n(H, \Ind_G^H M) \arrow[l, "\nu_1" '] \\
 & H^n(G,  \Ind_G^H M)   \arrow[ul, "(\shap)^{-1}"] \arrow[u, "\res" ']
\end{tikzcd}
\end{equation}
commute. If $M$ is a discrete $G$-module, we  have a commutative diagram
\begin{equation}
\label{eq:cor_from_sh}
\begin{tikzcd}
H^n(H, M) \arrow[dr, "\shap"] \arrow[r,  "\cores"] &H^n(G, M) \\
H^n(G,  M) \arrow[u,"\res"] \arrow[r, "i"]  & H^n(G, \Ind_G^H M) \arrow[u,"\nu"].
\end{tikzcd}
\end{equation}
If $M$ is instead a discrete $H$-module, we have a commutative square
\begin{equation}
\label{eq:rho_tau_square}
\begin{tikzcd}
H^n(H,\, M)\arrow{r}{\tau} \arrow{d}{\shap_G^H} & H^n\left(\tau H\tau^{-1}, \,\tau M\right)\arrow{d}{\shap_G^{\tau H \tau^{-1}}} \\
H^n\left(G,  \Ind_G^H M\right ) \arrow{r}{\rho_{\tau}} & H^n\Big(G, \,\Ind_G^{\tau H\tau^{-1}} \tau M\Big)
\end{tikzcd}
\end{equation}
for every $\tau$ in $G$.

We now define a final group change operation that we will refer to as the restricted Shapiro isomorphism. This is related to the double coset formula relating restriction and corestriction (see e.g. \cite[(1.5.6)]{Neuk08}).
\begin{defn}
\label{defn:shap_GHC}
Given a profinite group $G$, take $H$ to be an open subgroup of $G$, and take $C$ to be a closed subgroup of $G$. For any $\tau$ in $G$, we take the notation
\[D_\tau = H \cap \tau^{-1} C \tau.\]
We can then define a natural morphism on the category of discrete $H$-modules
\[
r^{C\backslash G / H}_{\tau} \,\colon\, \text{Conj}^{\tau^{-1} C \tau}_{\tau} \,\circ\,\Ind^{D_{\tau} }_{\tau^{-1} C \tau}  \,\circ\,   \text{Res}_{D_{\tau}}^H  \natmor \text{Res}_C^G\,\circ\, \Ind_G^H
\]
by the formula
\[\tau\cdot\left([\sigma] \otimes m\right) \mapsto [\tau\sigma ] \otimes m.\]
If we take $B$ to be a set of representatives for the double coset $C\backslash G/H$, the map
\begin{equation}
 \label{eq:raw_doublecoset}
 r^{C\backslash G / H} \,\colon\, \bigoplus_{\tau \in B} \,\tau\left( \Ind^{ D_{\tau}}_{\tau^{-1}C\tau} ( M)\right) \isoarrow  \Ind_G^H(M)
\end{equation}
with $\tau$-component $r^{C\backslash G/H}_{\tau}$ is a natural isomorphism of $C$-modules.

We   define the \emph{restricted Shapiro isomorphism}
\begin{equation}
\label{eq:sh_GHC_def}
\restr{\shap_G^H}{C}\,\colon\,\bigoplus_{\tau \,\in\, B} H^n(D_{\tau}, \, M) \isoarrow H^n\left(C, \,\Ind_H^G M\right)
\end{equation}
by composing \eqref{eq:raw_doublecoset} with the isomorphisms
\[\text{conj}_{\tau} \circ \shap^{D_{\tau}}_{\tau^{-1} C \tau} \,\colon\,  H^n(D_{\tau}, M) \isoarrow H^n\left(C,\, \tau\left( \Ind^{ D_{\tau}}_{\tau^{-1}C\tau} ( M)\right)\right)\]
  for each $\tau$ in $B$.

When $n = 0$, this isomorphism has $\tau$ component given by
\[m \xmapsto{\quad} \sum_{\sigma \in (\tau^{-1} C \tau) / D_{\tau}} [\tau \sigma] \otimes m\, =\sum_{\sigma \in C/(\tau D_{\tau} \tau^{-1})} [\sigma \tau] \otimes m .\]
From this description, we find that the $\tau$-component of \eqref{eq:sh_GHC_def} is given by the composition
\begin{equation}
    \label{eq:shHGU_def}
H^n(D_{\tau}, M) \xrightarrow{\,i_1\,} H^n\left(D_{\tau}, \Ind_G^H M\right) \xrightarrow{\,\tau\,} H^n\left(\tau D_{\tau} \tau^{-1}, \Ind_G^H M\right) \xrightarrow{\,\cores\,} H^n\left(C, \Ind_G^H M\right),
\end{equation}
and the $\tau$ component of its inverse is the composition
\[ H^n\left(C, \Ind_G^H M\right) \xrightarrow{\,\res\,} H^n\left(\tau D_{\tau} \tau^{-1}, \Ind_G^H M\right) \xrightarrow{\,\tau^{-1}\,} H^n\left(D_{\tau}, \Ind_G^H M\right) \xrightarrow{\,\nu_1\,}
H^n(D_{\tau}, M).\]

Now suppose that $C_1$ is a closed subgroup of $C$,  and for $\epsilon$ in $G$ write
\[D_{1\epsilon} = H \cap \epsilon^{-1} C_1 \epsilon.\] 
Take $B_1$ to be a set of representatives for the double cosets in $C_1\backslash G/H$, chosen so each $\epsilon$ is of the form $c \tau$ for some $c$ in $C$ and $\tau$ in $B$. Given $\tau$ in $B$, we consider the map
\[\oplus_{\epsilon}\, \res_{D_{1\epsilon}}^{D_{\tau}} \,\colon\, H^n(D_{\tau}, \, M) \to \bigoplus_{\epsilon} H^n(D_{1\epsilon}, \, M),\]
where $\epsilon$ varies through the subset of $B_1$ in $C \tau H$. Bundling these together, we are left with a map
\[\oplus_{\tau, \epsilon}\, \res_{D_{1\epsilon}}^{D_{\tau}} \,\colon\, \bigoplus_{\tau \in B} H^n(D_{\tau}, \, M) \to \bigoplus_{\epsilon \in B_1} H^n(D_{1\epsilon}, \, M) \]
fitting into   the commutative diagram
\begin{equation}
    \label{eq:sh_HGC_C_change}
\begin{tikzcd}[row sep = large]
\bigoplus_{\tau} H^n(D_{\tau}, \, M) \arrow{r}{\restr{\shap^H_G}{C}} \arrow[d, swap, "\oplus_{\tau, \epsilon}\, \res_{D_{1\epsilon}}^{D_{\tau}}"] &   H^n\left(C, \,\Ind_H^G M\right) \arrow{d}{\res_{C_1}^C}\\
\bigoplus_{\epsilon} H^n(D_{1\epsilon}, \, M)\arrow{r}{\restr{\shap^H_G}{C_1}} & H^n\left(C_1, \,\Ind_H^G M\right).
\end{tikzcd}
\end{equation}
This is particularly useful in the case that $C = G$, where $\restr{\shap^H_G}{C}$ equals $\shap^H_G$.

If $C_1$ is a normal subgroup of $C$, and if we choose $\epsilon$ in $B_1$ corresponding to $\tau$ in $B$, we see that $D_{1\tau} = D_{1 \epsilon}$. In this case, the $\epsilon$ component of $\oplus_{\tau, \epsilon}\, \res_{D_{1\epsilon}}^{D_{\tau}}$ equals $ \res_{D_{1\tau}}^{D_{\tau}}$. Removing the redundant components of \eqref{eq:sh_HGC_C_change} then leaves a square
\begin{equation}
    \label{eq:sh_HGC_inertia}
\begin{tikzcd}
\bigoplus_{\tau} H^n(D_{\tau}, \, M) \arrow{r}{\sim} \arrow[d, swap, "\oplus_{\tau}\, \res_{D_{1\tau}}^{D_{\tau}}"] &   H^n\left(C, \,\Ind_H^G M\right) \arrow{d}{\res_{C_1}^C}\\
\bigoplus_{\tau} H^n(D_{1\tau}, \, M)\arrow[r, hook] & H^n\left(C_1, \,\Ind_H^G M\right).
\end{tikzcd}
\end{equation}
\end{defn}

\subsection{Explicit cochain formulae} \label{sec:explicit_cochain_formulae}
We detail here certain maps on inhomogeneous cochains which induce previously considered operations on cohomology.  

\subsubsection{Conjugation}

\begin{defn}
\label{defn:conj_exp}
Take $H$ to be a closed subgroup of $G$, and take $M$ to be a discrete $H$-module. Given $\tau$ in $G$ and $n \ge 0$, we have the conjugation operation
\[\text{conj}_\tau^{H} \colon C^n(H, M) \to C^n(\tau H\tau^{-1},\, \tau M)\]
defined by
\[\text{conj}_\tau^{H}(\phi)(\sigma_1, \dots, \sigma_n) = \tau \cdot \phi(\tau^{-1} \sigma_1\tau, \,\dots, \,\tau_n^{-1}\sigma_n \tau_n).\]
This gives a chain map on the complex of inhomogeneous cochains, and taking cohomology recovers the conjugation operation defined in Figure \ref{fig:group_change}.
\end{defn}

\subsubsection{Corestriction and Shapiro maps}
\begin{defn}
\label{defn:explicit_shap}
Take $H$ to be an open subgroup of  $G$, and choose a map (giving a transversal for $H$ in $G$)
\[T\colon G/H \to G\]
whose composition with the projection $G \to G/H$ is the identity, and so that $T(H)=1$.  

For $n \ge 0$, and any discrete $G$-module $M$, we define a map
\[{}^T\cores^H_G \colon C^n(H, M) \to C^n(G, M)\]
by setting
\begin{equation}
\label{eq:shap_coch}
\left({}^T\cores^H_G f\right)(\sigma_1, \dots, \sigma_n) =  \sum_{\tau H \in G/H} T(\tau H)\big( f(\sigma_1', \dots, \sigma_n')\big) 
\end{equation}
where we have taken
\[\sigma_j' = T\left(\sigma_{j-1}^{-1}\dots \sigma_2^{-1}\sigma_1^{-1}\tau H\right)^{-1}\,\, \sigma_j \,\, T\left(\sigma_{j}^{-1}\dots \sigma_2^{-1}\sigma_1^{-1}\tau H\right).\]
We have a commutative square
\begin{equation}
    \label{eq:in_to_ho_cor}
\begin{tikzcd}
C^n(H, M) \ar[r, "{}^T\cores^H_G"] \ar[d, hook] & C^n(G, M) \ar[d, hook]\\
\mathscr{C}^n(H, M) \ar{r} &\mathscr{C}^n(G, M),
\end{tikzcd}
\end{equation}
where the bottom row is the corestriction for homogeneous cochains given in \cite[I.5.4]{Neuk08},  defined from the given transversal $T$. This definition is natural, in the sense that a morphism $\alpha \colon M \to N$ of discrete $G$-modules gives a commutative square
\begin{equation}
\label{eq:shap_coch_func}
\begin{tikzcd}
C^n(H, M) \arrow[r, "{}^T\cores^H_G"] \arrow[d, "\alpha"] & C^n(G,  M)\arrow[d, "  \alpha"]\\
C^n(H, M_2) \arrow[r, "{}^T\cores^H_G"]  & C^n(G, M_2).
\end{tikzcd}
\end{equation}
We then define
\[{}^T\shap^H_G = {}^T\cores^H_G \circ i_1,\]
where $i_1$ is the natural homomorphism of Figure \ref{fig:nat_mor}. Per \eqref{eq:sh_dec}, this yields the Shapiro isomorphism when evaluated on cocycle classes. 
\end{defn}

The key properties of this construction are as follows.  
\begin{lem}
\label{lem:gc_appendix}
$\,$
\begin{enumerate}
\item For any $f$ in $C^n(H, M)$  we have
\[\label{eq:shap_coch_d}
d\left({}^T\shap^H_G f \right) = {}^T\shap^H_G (df).\]
\item  Given discrete $H$-modules $M_1, M_2$, we can define a $G$-equivariant homomorphism
\[P \colon \Ind^H_G M_1 \otimes \Ind^H_G M_2 \to \Ind^H_G (M_1 \otimes M_2)\]
by setting, for $m_1$ in $M_1$, $m_2$ in $M_2$, and $\sigma_1, \sigma_2$ in $G$,
\[P\big([\sigma_1] \otimes m_1,\,[\sigma_2] \otimes m_2\big) = \begin{cases} [\sigma_1] \otimes ( m_1 \otimes \sigma_1^{-1} \sigma_2m_2)&\text{ if } \sigma_1 H = \sigma_2 H \\ 0 &\text{ otherwise.}\end{cases}\]

Then, given $k, j \ge 0$, we have
\[\label{eq:shap_coch_cup}
{}^T\shap^H_G (f \cup g) \,=\, {}^T\shap^H_G f \,\cup_{P}\, {}^T\shap^H_G  g \]
for all $f$ in $C^k(H, M_1)$ and $g$ in $C^j(H, M_2)$.
\item Choose $\tau$ in $G$ satisfying $\tau = T(\tau H)$. Then we  have the identity
\[\nu_1 \circ \tau^{-1} \circ \res^G_{\tau H \tau^{-1}} \circ {}^T\shap^H_G (f) \,=\, f\]
for all  $f$ in $C^n(H, M)$.
\end{enumerate}

\end{lem}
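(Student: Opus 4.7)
The plan is to prove each of the three parts separately, using the explicit formula \eqref{eq:shap_coch} and the compatibility with the homogeneous cochain perspective encoded in diagram \eqref{eq:in_to_ho_cor}.

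For part (1), the cleanest route is to exploit the fact that the bottom row of \eqref{eq:in_to_ho_cor} is, by construction, the corestriction on the standard homogeneous resolution, which is a chain map (see, e.g., \cite[I.5.4]{Neuk08}). Since the vertical inclusion $C^n \hookrightarrow \mathscr{C}^n$ intertwines the two coboundaries and is injective, it follows that ${}^T\cores^H_G$ is itself a chain map on inhomogeneous cochains. Combining this with the obvious fact that $i_1 \colon M \to \Ind^H_G M$ is $H$-equivariant, so that the induced operation on inhomogeneous cochains commutes with $d$, yields $d \circ {}^T\shap^H_G = {}^T\shap^H_G \circ d$.

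For part (2), I would work directly with the formula \eqref{eq:shap_coch}. Writing $f' = i_1 \circ f$ and $g' = i_1 \circ g$, I would expand both sides of the desired identity evaluated on a tuple $(\sigma_1, \dots, \sigma_{k+j})$. The right-hand side is a double sum over pairs of cosets $\tau_1 H$, $\tau_2 H$, and the definition of $P$ forces $\sigma_{k}^{-1}\cdots \sigma_1^{-1}\tau_1 H = \tau_2 H$ for the term to survive. Tracing how the transversal $T$ cycles coset labels inside the shifted arguments $\sigma_j'$, this survival condition picks out exactly one summand per coset $\tau H$, and the resulting summand matches the $\tau H$-summand of ${}^T\shap^H_G(f \cup g)$ term by term, after using the definition of the action on $[\sigma_1] \otimes (m_1 \otimes \sigma_1^{-1}\sigma_2 m_2)$. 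This is essentially the standard verification that the Shapiro isomorphism respects cup products, repackaged through the pairing $P$.

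For part (3), I would argue by direct evaluation on $(\sigma_1, \dots, \sigma_n) \in H^n$. Applying $\res^G_{\tau H \tau^{-1}}$ restricts attention to the tuple $(\tau \sigma_1 \tau^{-1}, \dots, \tau \sigma_n \tau^{-1})$; then $\tau^{-1}$ conjugates the result back to a cochain in $C^n(H, \Ind_G^H M)$; and finally $\nu_1$ retains only the $[1]$-component. Plugging into \eqref{eq:shap_coch}, the only coset contributing to this component is $\tau H$ itself, because for any other $\tau' H \ne \tau H$ the transversal representatives appearing in the formula lie outside $H$ and are killed by $\nu_1$. On the surviving summand, the shifted arguments $\sigma_j'$ all lie in $H$, and the identity $T(H) = 1$ collapses the $T$-conjugations so that the expression reduces to $f(\sigma_1, \dots, \sigma_n)$, as required.

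The main obstacle will be the bookkeeping in part (2): one needs to track how the transversal $T$ shifts the coset labels inside the shifted arguments $\sigma_j'$ when applied to the product cochain $f \cup g$, and verify that the coset-matching condition built into $P$ aligns correctly with the coset indexing on both sides. The calculation is mechanical but has to be set up carefully; once the indexing is right, the required identity follows by substitution.
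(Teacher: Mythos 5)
Your proofs of parts (1) and (3) follow essentially the same route as the paper. For part (1), both you and the paper deduce the chain-map property of ${}^T\cores^H_G$ from the commutative square \eqref{eq:in_to_ho_cor}, transporting the known chain-map property of corestriction on homogeneous cochains across the injective vertical map; this is correct and matches the paper exactly. For part (3), your coset-counting argument — applying $\res^G_{\tau H\tau^{-1}}$, then $\tau^{-1}$, then $\nu_1$, and observing that only the $\tau H$-summand survives — is the same argument the paper gives, written there by packaging ${}^T\shap^H_G(f)$ as $\sum_{\tau'}[T(\tau' H)]\otimes g_{\tau'}$. One small imprecision: in your final step you cite the defining normalization $T(H)=1$ as the reason the $T$-conjugations collapse, but the identity actually doing the work is the \emph{hypothesis} $\tau = T(\tau H)$ from the statement of part (3). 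With $\sigma_1,\dots,\sigma_n \in H$, every coset $(\tau\sigma_j\tau^{-1})^{-1}\cdots(\tau\sigma_1\tau^{-1})^{-1}\tau H$ equals $\tau H$, so each $T$-value occurring in the shifted arguments is $\tau$ (not $1$), and the collapse is $\tau^{-1}(\tau\sigma_j\tau^{-1})\tau = \sigma_j$. Likewise, the reason all other summands vanish is that $\tau^{-1}T(\tau' H)\notin H$ for $\tau' H \neq \tau H$ (you wrote that the representatives ``lie outside $H$,'' which is true but not the operative condition after the $\tau^{-1}$-twist).

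For part (2), your approach is genuinely different from the paper's: you propose a direct expansion using the explicit formula \eqref{eq:shap_coch}, matching terms across the double sum via the coset-support condition built into $P$. The paper instead deduces the cup-product compatibility from the homogeneous-cochain picture, citing the explicit form of corestriction and cup product in \cite[I.4, I.5.4]{Neuk08}. Your route is more elementary and self-contained (no appeal to Neukirch's formulas for the homogeneous resolution), at the cost of somewhat heavier bookkeeping with the shifted arguments $\sigma_j'$; the paper's route is shorter on the page but implicitly relies on the reader verifying that the homogeneous cup product and the inhomogeneous one correspond under the comparison $e$. Both are valid. If you carry out the expansion, the key observation is that the $P$-condition $\sigma_1 H = \sigma_2 H$ forces the second-factor coset to equal $T(\sigma_k^{-1}\cdots\sigma_1^{-1}\tau H)H$, which is precisely the coset that appears when \eqref{eq:shap_coch} is applied to $f\cup g$ and one splits the shifted arguments at index $k$.
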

\begin{proof}
Given the commutative square \eqref{eq:in_to_ho_cor}, the first part is in \cite[I.5.4]{Neuk08} for the explicit corestriction of homogeneous cochains. The identity for cup products is also immediate from the form of corestriction and cup product for homogeneous cochains, with the latter defined as in \cite[I.4]{Neuk08}.

For the third part, define functions $g_{\tau'}: G^n \to M$ so
\[{}^T\shap^H_G (f) = \sum_{\tau' \in G/H} \big[T(\tau' H)\big] \otimes g_{\tau'}.\]
For $\sigma_1, \dots, \sigma_n$ in $H$, we then have
\begin{align*}
&    \tau^{-1} \,\circ \,\res^G_{\tau H \tau^{-1}} \,\circ\, {}^T\shap^H_G (f)(\sigma_1, \dots, \sigma_n)\\
&\qquad\qquad=\, \tau^{-1} \sum_{\tau' \in G/H} \big[T(\tau' H)\big] \otimes g_{\tau'}\left(\tau \sigma_1 \tau^{-1}, \dots, \tau \sigma_n \tau^{-1}\right). 
\end{align*}
Applying $\nu_1$ just leaves
\[g_{\tau}\left(\tau \sigma_1 \tau^{-1}, \dots, \tau \sigma_n\tau^{-1}\right),\]
and the definition of the explicit Shapiro map shows this equals $f(\sigma_1, \dots, \sigma_n)$. 
\end{proof}

\end{document}